\newtheorem{teo}{Theorem}[section]
\newtheorem{lemma}[teo]{Lemma}
\newtheorem{defi}[teo]{Definition}
\newtheorem{coro}[teo]{Corollary}
\newtheorem{cla}[teo]{Claim}
\newtheorem{prop}[teo]{Proposition}
\newtheorem{question}{Question}
\theoremstyle{remark}
\begin{document}

\newcommand{\ran}{{\rm ran}}
\newcommand{\cof}{{\rm cof}}
\newcommand{\dom}{{\rm dom}}
\newcommand{\I}{I}
\newcommand{\N}{\mathcal{N}}
\newcommand{\up}{\upharpoonright}
\newcommand{\rest}{\upharpoonright}
\newcommand{\urltilde}{\kern -.15em\lower .7ex\hbox{~}\kern .04em}

\newcommand\M{\mathcal{M}}
\newcommand\Acal{\mathscr{A}}
\newcommand\Bcal{\mathscr{B}}
\newcommand\Dcal{\mathscr{D}}
\newcommand\Ecal{\mathcal{E}}
\newcommand\Fcal{\mathscr{F}}
\newcommand\Hcal{\mathscr{H}}
\newcommand\Ical{\mathscr{I}}
\newcommand\Ncal{\mathscr{N}}
\newcommand\Mcal{\mathscr{M}}
\newcommand\Pcal{\mathscr{P}}
\newcommand\Qcal{\mathscr{Q}}
\newcommand\calQ{\mathcal{Q}}
\newcommand\Rcal{\mathscr{R}}
\newcommand\Scal{\mathcal{S}}
\newcommand\Tcal{\mathcal{T}}
\newcommand\Ucal{\mathcal{U}}
\newcommand\Vcal{\mathscr{V}}
\newcommand\Zcal{\mathscr{Z}}
\newcommand\Rbb{\mathbb{R}}
\newcommand\Nfrak{\mathfrak{N}}
\newcommand\Pfrak{\mathfrak{P}}
\newcommand\restrict{\restriction}
\newcommand{\seq}{\subseteq}
\newcommand{\ORD}{{\rm ORD}}

\newcommand\Gdot{\dot{G}}
\newcommand\Qdot{\dot{Q}}

\newcommand{\diff}{\operatorname{\mathrm diff}}
\newcommand{\Ht}{\operatorname{\mathrm ht}}
\newcommand{\lev}{\operatorname{\mathrm lev}}
\newcommand{\meet}{\wedge}
\newcommand\triord{\triangleleft}
\newcommand{\Th}{{}^{\mathrm{th}}}
\newcommand{\St}{{}^{\mathrm{st}}}
\newcommand\axiom{\mathrm}
\newcommand\MA{\axiom{MA}}
\newcommand\MM{\axiom{MM}}
\newcommand\PFA{\axiom{PFA}}
\newcommand\BPFA{\axiom{BPFA}}
\newcommand\MRP{\axiom{MRP}}
\newcommand\SRP{\axiom{SRP}}
\newcommand\ZFC{\axiom{ZFC}}
\newcommand\CAT{\axiom{CAT}}
\newcommand\CH{\axiom{CH}}
\newcommand{\<}{\langle}
\renewcommand{\>}{\rangle}
\newcommand\mand{\textrm{ and }}
\renewcommand{\diamond}{\diamondsuit}
\newcommand{\forces}{\Vdash}

\newcommand{\cf}{{\mbox{cof}}}
\newcommand{\height}{\Ht}
\newcommand\NS{\mathrm{NS}}

\def\0{\mathbf 0}
\def\1{\mathbf 1}

\newcommand{\J}{{\mathcal J}}
\newcommand{\FF}{{\mathcal F}}
\newcommand{\B}{{\mathcal B}}
\newcommand{\E}{{\mathcal E}}
\newcommand{\CC}{{\mathcal C}}
\newcommand{\DD}{{\mathcal D}}
\newcommand{\A}{{\mathcal A}}
\newcommand{\re}{\restriction}
\newcommand{\wh}{\; \widehat \;}

\title{Ranks of Maharam algebras}
\author{\v{Z}ikica Perovi\'{c}}
 %\email{zperovic@miracosta.edu}
\address{ MiraCosta College, One Barnard Drive, Oceanside, CA 92056, USA }
\email{zperovic@miracosta.edu}

\author[Boban Veli\v{c}kovi\'{c}]{Boban Veli\v{c}kovi\'{c}}
\address{Institut de Math\'ematiques de Jussieu - Paris Rive Gauche,
Universit\'e Paris Diderot,  8 Place Aur\'elie Nemours, 75205 Paris Cedex 13, France}
\email{boban@math.univ-paris-diderot.fr}
\urladdr{http://www.logique.jussieu.fr/~ boban}

\keywords{exhaustive submeasures, Maharam algebras, Schreier families}
\subjclass[2010]{28Axx, 28Bxx(primary), and 03E10(secondary)}

\begin{abstract} Solving a well-known problem of Maharam, Talagrand \cite{Talagrand}  
constructed an exhaustive  non uniformly exhaustive submeasure, thus also providing the first example of 
a Maharam algebra that is not a measure algebra. 
To each exhaustive submeasure one can canonically assign a certain countable ordinal, its exhaustivity rank.
In this paper, we use carefully constructed Schreier families and norms
derived from them to provide examples of exhaustive submeasures 
of arbitrary high exhaustivity rank. This gives rise to uncountably many
non isomorphic separable atomless Maharam algebras. 
\end{abstract}

\maketitle

\section{ Introduction}
We say that a complete Boolean ${\mathcal B}$ algebra is a {\em measure
algebra} if it admits a strictly positive $\sigma$-additive
probability measure.
Recall that a {\em
submeasure} on Boolean algebra ${\mathcal B}$ is a function $\nu
:{\mathcal B}\rightarrow [0,+\infty]$ such that

\begin{enumerate}

\item $\nu({\0})=0$,
\item If $x\leq y$ then $\nu(x)\leq \nu(y)$,
\item$\nu(x\vee y)\leq \nu(x)+\nu(y)$, for all $x,y \in \mathcal B$;
\end{enumerate}

\noindent We say that $\nu$ is {\em positive} if $\nu(a)>0$, for
every $a\in {\mathcal B}\setminus \{ {\0}\}$. If ${\mathcal B}$ is
complete the role of $\sigma$-additivity is played by the following
continuity condition.

\begin{enumerate}
\item[(4)] $\nu(x_n)\rightarrow \nu(\inf_nx_n)$, whenever $\{x_n\}_n$ is a decreasing
sequence.
\end{enumerate}

\noindent A submeasure $\nu$ satisfying (4) is called {\em
continuous}. If a complete Boolean algebra ${\mathcal B}$ carries a
positive continuous submeasure then we call it a {\em Maharam
algebra}.

In an attempt to find an algebraic characterization of measure algebras  Von Neumann asked in 1937 
if every ccc weakly distributive  complete  Boolean algebra is a measure algebra 
(see \cite{Mau:ScottishBook}).   Working on Von Neumann's
problem Maharam \cite{Maharam} formulated the notion of a
continuous submeasure and found an algebraic characterization for a
complete Boolean algebra to carry one. Maharam also showed that every Maharam
algebra is weakly distributive and satisfies the ccc. Therefore Von
Neumann's original question was naturally decomposed into two questions.

\begin{question} Is every Maharam algebra a measure algebra?
\end{question}

\begin{question} Is every  ccc weakly distributive complete Boolean
algebra a Maharam algebra?
\end{question}
In this paper we will not discuss Question 2, instead we refer the interested reader to \cite{Ve:survey}. 
Over the years a significant amount of work has been done on  Question 1, which was known to be equivalent 
to the famous Control Measure Problem, i.e. the question whether every countably additive
vector valued measure $\mu$ defined on a $\sigma$-algebra of sets
and taking values in an $F$-space, i.e. a completely metrizable
topological vector space, admits a {\em control measure}, i.e. a
countable additive scalar measure $\lambda$ having the same null
sets as $\mu$. For instance,   Kalton and Roberts
\cite {Kalton-Roberts} showed that a submeasure $\mu$ defined on a
(not necessarily complete) Boolean algebra ${\mathcal B}$ is equivalent
to a measure if and only if it is uniformly exhaustive. Recall that
a submeasure $\mu$ on  a Boolean algebra ${\mathcal B}$ is called {\em
exhaustive} if for every sequence $\{a_n\}_n$ of disjoint elements
of ${\mathcal B}$ we have $\lim _n \mu(a_n)=0$, $\mu$ is called
 {\em uniformly exhaustive} if for every $\epsilon >0$ there is an integer $n$ such
that there is no sequence of $n$ pairwise disjoint elements of ${\mathcal B}$ of
$\mu$-submeasure $\geq \epsilon$. Clearly, every continuous
submeasure on a complete Boolean algebra is exhaustive. If $\mu$ is
a positive submeasure on a Boolean algebra ${\mathcal B}$ one can
define a metric $d$ on ${\mathcal B}$ by setting $d(a,b)=\mu(a\Delta
b)$. If $\mu$ is exhaustive then the metric completion
$\bar{\mathcal B}$ of ${\mathcal B}$ equipped with the natural
boolean algebraic structure is a complete Boolean algebra and $\mu$
has a unique extension $\bar{\mu}$ to a continuous submeasure on
$\bar{\mathcal B}$. Thus $\bar{\mathcal B}$ is a Maharam algebra. It
follows that  Question 1  is equivalent to the question whether every
exhaustive submeasure on a Boolean algebra ${\mathcal B}$ is uniformly
exhaustive. In 2005  Talagrand \cite{Talagrand} produced a remarkable
example of an exhaustive submeasure which is not uniformly
exhaustive. As a consequence he obtained the following result.

\begin{teo}[\cite{Talagrand}]
 There is a Maharam algebra which is not a measure algebra.
\end{teo}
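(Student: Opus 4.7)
The strategy is to reduce the existence of a Maharam algebra that is not a measure algebra to the existence of a positive submeasure that is exhaustive but not uniformly exhaustive. The reduction is essentially spelled out in the discussion preceding the statement, so the proof amounts to invoking Talagrand's construction of such a submeasure, passing to the metric completion, and using Kalton--Roberts to preclude the resulting complete Boolean algebra from being a measure algebra.

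Step one, and the main obstacle, is to produce a positive submeasure $\mu$ on some Boolean algebra $\mathcal{B}$ which is exhaustive yet not uniformly exhaustive. Following Talagrand, the natural setting is to take $\mathcal{B}$ to be a subalgebra of $\mathscr{P}(X)$ for $X=\bigsqcup_n F_n$ a countable disjoint union of finite ``levels'', and to define
\[
\mu(a)=\inf\Bigl\{\sum_i \varphi(C_i) \; : \; a\seq \bigcup_i C_i\Bigr\},
\]
where the $C_i$ range over a prescribed family of admissible ``cells'' and $\varphi$ is a carefully tuned weight. The weights must be chosen so that two competing features coexist: on the one hand, on level $F_n$ one must exhibit $N_n$ pairwise disjoint sets of $\mu$-submeasure bounded away from $0$ with $N_n\to\infty$, which defeats uniform exhaustivity; on the other hand, any infinite pairwise disjoint sequence $(a_k)_k$ in $\mathcal{B}$ must satisfy $\mu(a_k)\to 0$, which forces such a sequence eventually to become thin relative to every admissible covering. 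Balancing these two requirements is the genuinely hard part of the argument and relies on a delicate combinatorial/probabilistic estimate on how cells can simultaneously overlap disjoint sets.

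Granted such a $\mu$, step two is the completion argument recalled in the introduction: the pseudometric $d(a,b)=\mu(a\triangle b)$ on $\mathcal{B}$ has a metric completion $\overline{\mathcal{B}}$, this completion inherits the structure of a complete Boolean algebra, and $\mu$ extends to a continuous positive submeasure $\bar\mu$ on $\overline{\mathcal{B}}$. By definition $\overline{\mathcal{B}}$ is then a Maharam algebra.

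Step three is to rule out that $\overline{\mathcal{B}}$ is a measure algebra. Suppose it were, witnessed by a strictly positive $\sigma$-additive probability measure $\lambda$. Its restriction $\lambda_0$ to $\mathcal{B}$ is a finitely additive strictly positive measure, and since both $\mu$ and $\lambda_0$ extend continuously to the same completion $\overline{\mathcal{B}}$, the metric topologies they induce on $\mathcal{B}$ coincide, giving a two-sided uniform absolute continuity between $\mu$ and $\lambda_0$. As $\lambda_0$ is automatically uniformly exhaustive (every measure is: at most $1/\varepsilon$ pairwise disjoint elements can have $\lambda_0$-measure $\geq \varepsilon$), uniform exhaustivity then transfers to $\mu$, contradicting step one. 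Equivalently, the Kalton--Roberts theorem cited above directly yields this transfer: $\mu$ would be equivalent to a measure and thus uniformly exhaustive.
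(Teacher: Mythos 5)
Your proposal is correct and follows essentially the same route as the paper: the theorem is obtained by invoking Talagrand's exhaustive but not uniformly exhaustive submeasure as a citation, passing to the metric completion to get a Maharam algebra, and using Kalton--Roberts (equivalently, uniform exhaustivity of measures plus mutual absolute continuity of Maharam submeasures) to rule out a strictly positive measure --- exactly the reduction the paper sketches before stating the result. The only caveat is that your step one is a description rather than a proof, but the paper likewise treats the construction itself as Talagrand's cited contribution.
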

Now we know that there are Maharam algebras that are not measure algebras, 
but we do not know much about their structure. Fremlin (see \cite{measure-theory-v5}) 
suggested using the exhaustivity rank as a tool for classifying Maharam algebras.
%(see \cite{measure-theory-v5}).\\

Suppose that $\B$ is a Boolean algebra and $\nu$ an exhaustive submeasure on $\B$. For $\epsilon > 0$, 
let $\mathcal{D}_\epsilon (\nu)$ be the set of all finite pairwise disjoint subsets $F$ of
$\B$ such that $\nu(a)\geq \epsilon$, for all $a\in F$.
Since $\nu$ is exhaustive it follows that $(\mathcal{D}_\epsilon (\nu),\supset)$ is well-founded. 
Let ${\rm rk }_\epsilon(\nu)$ be the rank of this ordering. More precisely, 
for each $F\in  \mathcal D_\epsilon(\nu)$, we define the ${\rm rk}_\epsilon(\nu,F)$ 
 by letting: 
\[
{\rm rk}_\epsilon(\nu,F)=\sup\{{\rm rk}_\epsilon(\nu,G)+1:G \in \mathcal D_\epsilon(\nu)
\mbox{ and } G \supsetneq F\}
\]
We then let ${\rm rk}_\epsilon(\nu)={\rm rk}_\epsilon(\nu,\emptyset)$. Finally, we let
${\rm rk}(\nu)= \sup \{ {\rm rk}_\epsilon(\nu): \epsilon >0\}$. 
Since any two Maharam submeasures on a Maharam algebra $\B$ are absolutely continuous with
respect to each other they have the same exhaustivity rank, hence this rank is an invariant
of $\B$ and we denote it by  ${\rm rk}(\B)$. 
Fremlin \cite{Fremlin-06204}	
proved that if $\B$ is a Maharam algebra, but not a measure algebra then  ${\rm rk}(\B)\ge \omega^\omega$. 
He also showed that ${\rm rk}(\mathcal T) \leq\omega^{\omega^2}$ for
the Maharam algebra $\mathcal T$ constructed by Talagrand \cite{Talagrand}.
Generalizing  Fremlin's question (\cite {measure-theory-v5} 539Z) we consider  the following.
\begin {question} Are there Maharam algebras of arbitrary high countable exhaustivity rank? 
\end{question}
We give a positive answer to this question. We define the notion of an admissible norm
and we generalize Talagrand's construction by replacing the cardinalities of the relevant sets by their norms. 
By varying this norm we obtain  examples of submeasures of arbitrary high exhaustivity ranks.

The paper is organized as follows. In \S 2 we define admissible norms and show how to produce
examples of such norms using Schreier families.  We also prove some easy technical facts
about these norms that will be needed in the main construction. In \S 3 we describe our
generalization of Talagrand's construction based on any admissible norm. We also give a lower
bound on the exhaustivity ranks of the submeasures built on the Schreier norms. 
In \S 4 we prove that the submeasures constructed in \S 3 are exhaustive and derive some corollaries.
In \S 5 we provide upper bounds on the exhaustivity ranks of our submeasures. 
Our presentation is completely self-contained, however a good understanding of  \cite{Fa:Examples},
\cite{Ro:Maharam}, and \cite{Talagrand} would clearly be useful when reading the current paper.

\section{Admissible families and norms}

We will be interested in functions on finite sets of integers that have certain features of the cardinality function. 
 
\begin{defi}\label{spreading}
Suppose $A$ and $B$ are finite subsets of $\mathbb N$. We write $A\leq_s B$ if,
 letting $A=\{ a_0,\ldots, a_{n-1}\}$ and $B=\{ b_0,\ldots b_{m-1} \}$
 be the increasing enumerations of $A$ and $B$, we have that $n=m$ and $a_i\leq b_i$, for all 
 $i<n$. 
\end{defi}

\begin{defi}\label{admissible-norms} A {\em norm} is a function $\Vert \cdot \Vert : [\mathbb N]^{<\omega} \rightarrow
	\mathbb N$ such that:
	\begin{enumerate}
		\item[$(1)$] $|| \emptyset || = 0$ and $|| \{ n\}|| =1$, for all $n\in \mathbb N$,
		\item[$(2)$] if $A\subseteq B$ then $|| A|| \leq ||B||$,
		\item[$(3)$] $|| A\cup B|| \leq || A || + || B||$, for every $A,B\in [\mathbb N]^{<\omega}$.
	\end{enumerate}
	 We say that a norm $||\cdot ||$ is: 
	 \begin{enumerate}
	 	\item[$(4)$] {\em unbounded} if $\lim_{n\rightarrow \infty} || A\cap n|| = +\infty$,
	 	for every infinite $A\subseteq \mathbb N$,
	 	\item[$(5)$] {\em spreading} if $|| A||\leq ||B||$, for every 
	 	 $A,B \in [\mathbb N]^{<\omega}$ such that $A\leq_s B$. 
	 
	 \end{enumerate}
	 A norm that is both unbounded and spreading will be called {\em admissible}.
\end{defi}

We now describe a canonical way to generate admissible norms on $[\mathbb N]^{<\omega}$.  

\begin{defi}\label{admissible-families} Let $\mathscr S$ be a family of finite subsets of $\mathbb N$. 
We say that $\mathscr S$ is:
	\begin{enumerate}
	\item[$(1)$] {\em hereditary} if it is closed under taking subsets. 
	\item[$(2)$] {\em spreading} if  $A\in \mathscr S$ and $A \leq_s B$ implies $B\in \mathscr S$.
	\item[$(3)$]  {\em compact} if it is a compact subset of $2^{\mathbb N}$ with the product topology, 
	where we identify a subset of $\mathbb N$ with its characteristic function. 
	\end{enumerate}
      Finally,  we say that $\mathscr S$ is {\em admissible} if it is compact, hereditary, spreading and contains all singletons.
\end{defi}

Suppose $\mathscr S$ is an admissible family of finite subsets of $\mathbb N$.  
We can define a norm $|| \cdot ||_{\mathscr S}$ by letting $||A||_{\mathscr S}$ be the least
number of members of $\mathscr S$ needed to cover $A$. It is straightforward to check
that $|| \cdot ||_{\mathscr S}$ is an admissible norm. Conversely, if $|| \cdot ||$
is an admissible norm we can let $\mathscr S = \{ F\in [\mathbb N]^{<\omega}: || F||\leq 1\}$.
Then $\mathscr S$ is an admissible family and $|| \cdot || = || \cdot ||_{\mathscr S}$.
We can assign a rank to each admissible family $\mathscr S$. We do this using the language
of games. 

\begin{defi}\label{family-game} Let $\mathscr S$ be an admissible family and $\alpha$
	a countable ordinal. The game $\mathcal G_\alpha(\mathscr S)$ is played between two players
	${\rm I}$ and ${\rm II}$ as follows.
\[
\begin{array}[c]{ccccccccc}
{\rm I} : & \alpha_0 \phantom{n_0} & \alpha_1 \phantom{n_1}& \ \ \cdots \ \ & \alpha_k \phantom{n_k}& \ \ \cdots \ \ \\
\hline
{\rm II} : & \phantom{\alpha_0} n_0 & \phantom{\alpha_1} n_1 & \ \ \cdots \ \ & \phantom{\alpha_k} n_k  & \ \ \cdots \ \ \\
\end{array}
\]

\noindent Player ${\rm I}$ is required to play a decreasing sequence of ordinals $\leq \alpha$
and Player ${\rm II}$ is required to play an increasing sequence of integers such
that $\{n_0,\ldots n_k \} \in \mathscr S$, for all $k$.
The first player who cannot play loses. 
\end{defi}

Since Player ${\rm I}$ plays a decreasing sequence of ordinals, the game must end after finitely many stages. 
Therefore, by the Gale-Stewart theorem \cite{GaleStewart} one of the players has
a winning strategy. Since every infinite subset of $\mathbb N$ has an initial segment
which is not in $\mathscr S$, Player ${\rm II}$ cannot have a winning strategy in $\mathcal G_\alpha(\mathscr S)$, 
for all $\alpha< \omega_1$. We let $\rho(\mathscr S)$ be the least
$\alpha$ such that Player ${\rm I}$ has a winning strategy in  $\mathcal G_\alpha(\mathscr S)$.
Let $T_{\mathscr S}$ be the set of strictly increasing sequences of integers
whose range is in $\mathscr S$. We order $T_\mathscr S$ by reverse extension,
i.e. $s < t$ iff $t$ is a proper initial segment of $s$. Then $T_{\mathscr S}$
is well-founded and $\rho (\mathscr S)$ is simply the well-founded rank of $T_{\mathscr S}$. 

Given families  $\mathscr S$ and $\mathscr T$ of subsets of $\mathbb N$ 
let 
$
\mathscr S \oplus \mathscr T= \{ S \cup T: S \in \mathscr S \mbox{ and } T \in \mathscr T \}.
$  
It is easy to see that, if $\mathscr S$ and $\mathscr T$ are admissible, then so is 
$\mathscr S \oplus \mathscr T$.

\begin{lemma}\label{oplus} Suppose $\mathscr S$ and $\mathscr T$ are admissible families.
Let $\rho (\mathscr S)=\alpha$ and $\rho(\mathscr T)=\beta$. Then 
$\rho (\mathscr S \oplus \mathscr T)\leq (\alpha +1)(\beta +1)-1$.
\end{lemma}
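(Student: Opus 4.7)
The plan is to prove a slightly strengthened statement by transfinite induction on the pair $(\alpha,\beta)=(\rho(\mathscr S),\rho(\mathscr T))$ ordered lexicographically, dropping the admissibility hypothesis and assuming only that $\mathscr S$ and $\mathscr T$ are compact hereditary families of finite subsets of $\mathbb N$. This generalization is necessary because the ``derived'' families that appear when one looks below the first move in the tree are not in general spreading, so they need not be admissible even if $\mathscr S$ and $\mathscr T$ are.

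For a compact hereditary family $\mathscr F$ and an integer $m$, set
\[
\mathscr F_m := \{X : X \cup \{m\} \in \mathscr F,\ X = \emptyset \text{ or } \min X > m\},\quad \mathscr F|_m := \{Y \in \mathscr F : Y = \emptyset \text{ or } \min Y > m\}.
\]
Both remain compact hereditary; the tree $T_{\mathscr F_m}$ is isomorphic to the subtree of $T_\mathscr F$ below the node $(m)$, so $\rho(\mathscr F_m) < \rho(\mathscr F)$ whenever $\{m\} \in \mathscr F$, while $\rho(\mathscr F|_m) \leq \rho(\mathscr F)$ trivially because $T_{\mathscr F|_m} \subseteq T_\mathscr F$.

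For the inductive step let $\mathscr U := \mathscr S \oplus \mathscr T$. Each child $(m)$ of the root of $T_\mathscr U$ sits above a subtree isomorphic to $T_{\mathscr U_m}$. A short case analysis (pick a decomposition $X \cup \{m\} = S \cup T$, made disjoint by heredity, and split on whether $m \in S$ or $m \in T$) yields the exact equality
\[
\mathscr U_m = (\mathscr S_m \oplus \mathscr T|_m)\cup(\mathscr S|_m \oplus \mathscr T_m).
\]
Combined with the elementary fact $\rho(\mathscr A \cup \mathscr B) \leq \max(\rho(\mathscr A), \rho(\mathscr B))$ (a one-line induction on tree rank), it suffices to bound each summand using the inductive hypothesis. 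For the first summand the pair $(\rho(\mathscr S_m), \rho(\mathscr T|_m))$ is lexicographically below $(\alpha, \beta)$ because $\rho(\mathscr S_m) < \alpha$; for the second, even if $\rho(\mathscr S|_m) = \alpha$ one has $\rho(\mathscr T_m) < \beta$, again giving a lex-smaller pair.

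Using $\rho(\mathscr U) = \sup_m (\rho(\mathscr U_m) + 1)$, the target then reduces to the two ordinal-arithmetic inequalities
\[
(\rho(\mathscr S_m)+1)\rho(\mathscr T|_m) + \rho(\mathscr S_m) + 1 \leq (\alpha+1)\beta + \alpha
\]
and
\[
(\rho(\mathscr S|_m)+1)\rho(\mathscr T_m) + \rho(\mathscr S|_m) + 1 \leq (\alpha+1)(\rho(\mathscr T_m)+1) \leq (\alpha+1)\beta,
\]
which use the successor-refined bounds $\rho(\mathscr S_m)+1 \leq \alpha$ (from $\rho(\mathscr S_m) < \alpha$) and $\rho(\mathscr T_m)+1 \leq \beta$ respectively, together with monotonicity of ordinal multiplication. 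The main delicate point is bookkeeping: one must observe that $(\alpha+1)(\beta+1)$ is always a successor, so $(\alpha+1)(\beta+1)-1 = (\alpha+1)\beta+\alpha$ is well-defined, and one has to use these successor-refined bounds (rather than the naïve strict inequalities) in order to hit the claimed bound on the nose rather than being off by one.
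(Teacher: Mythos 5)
Your argument is correct, but it takes a genuinely different route from the paper's. The paper proves the lemma entirely inside the game formalism: Player {\rm I} interleaves a winning strategy for $\mathcal G_\beta(\mathscr T)$ with repeated runs of a winning strategy for $\mathcal G_\alpha(\mathscr S)$, playing pairs ordered lexicographically in $(\beta+1)\times(\alpha+1)$, and the final verification that the union of the blocks $B_i$ lies outside $\mathscr S\oplus\mathscr T$ uses heredity \emph{and} the spreading property of $\mathscr T$ (witnesses $m_i$ chosen inside the blocks are moved, via $\leq_s$, onto the block endpoints $n_{k_i}$). You instead induct on the lexicographically ordered pair of tree ranks, using the decomposition $\mathscr U_m=(\mathscr S_m\oplus\mathscr T|_m)\cup(\mathscr S|_m\oplus\mathscr T_m)$ of the derived families (only the inclusion $\subseteq$ is actually needed), together with $\rho(\mathscr A\cup\mathscr B)\le\max(\rho(\mathscr A),\rho(\mathscr B))$ and the recursion $\rho(\mathscr U)=\sup_m(\rho(\mathscr U_m)+1)$; your ordinal arithmetic, with the successor-refined bounds $\rho(\mathscr S_m)+1\le\alpha$, $\rho(\mathscr T_m)+1\le\beta$ and the identity $(\alpha+1)(\beta+1)-1=(\alpha+1)\beta+\alpha$, checks out. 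What each approach buys: the paper's proof is short and matches the game machinery used throughout the section, but it genuinely needs $\mathscr T$ to be spreading; yours dispenses with spreading entirely and so proves the stronger statement for arbitrary compact hereditary families, at the cost of a transfinite induction and some bookkeeping.

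Two minor corrections. Your stated reason for the strengthening is off: the derived families are in fact spreading when the original family is (if $A\le_s B$ and $\min A>m$, then $\min B>m$ and $A\cup\{m\}\le_s B\cup\{m\}$); what fails under derivation is the admissibility clause that all singletons belong to the family, so the generalization is still the right move, just for a different reason. Also, to invoke the rank recursion you should note that $\mathscr S\oplus\mathscr T$ is itself compact (it is the image of the compact set $\mathscr S\times\mathscr T$ under the continuous union map), so its tree is well-founded, and you should dispose of the degenerate cases $\{m\}\notin\mathscr S$ or $\{m\}\notin\mathscr T$, where one summand of your decomposition is empty; both points are routine.
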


\begin{proof} Let us fix winning strategies $\sigma$ and $\tau$ for Player {\rm I} in $\mathcal G_{\alpha}(\mathscr S)$
and $\mathcal G_{\beta}(\mathscr T)$. Let $\gamma = (\alpha +1)(\beta +1)-1$.
We need to define a winning strategy  for Player {\rm I}
in $\mathcal G_{\gamma}(\mathscr S \oplus \mathscr T)$. 
Note that the lexicographic ordering $<_{\rm lex}$ on $(\beta +1)\times (\alpha +1)$
has order type $(\alpha +1)(\beta +1)$. So, instead of playing ordinals $\leq \gamma$
Player {\rm I} will play pairs of ordinals in $(\beta +1)\times (\alpha +1)$ decreasing under $<_{\rm lex}$.
Player {\rm I} starts by playing according to $\sigma$, but at any give stage 
instead of playing the ordinal $\xi$ given by $\sigma$ he plays $(\beta,\xi)$. 
Player {\rm II} plays an increasing sequence of integers $\{n_0, n_1, \ldots\}$. 
Since $\sigma$ is a winning strategy for Player {\rm I} in $\mathcal G_{\alpha}(\mathscr S)$
there must be a stage $k_0$ such that $\{ n_0,\ldots, n_{k_0}\} \notin \mathscr S$. 
At that moment Player {\rm I} switches to playing $\mathcal G_{\beta}(\mathscr T)$
and considers that Player {\rm II} has played $n_{k_0}$ as the first move in this game. 
Suppose $\tau$ replies by playing some $\beta_1 < \beta$. 
Player {\rm I} then starts a new run of $\mathcal G_{\alpha}(\mathscr S)$ in which he plays
pairs of the form $(\beta_1,\xi)$, for $\xi \leq \alpha$. Since $\sigma$ is a winning strategy in this game, 
there must be a first stage $k_1$ such that $\{ n_{k_0+1},\ldots, n_{k_1}\} \notin \mathscr S$. 
Player {\rm I} then considers that Player {\rm II} has made another move 
 in $\mathcal G_{\beta}(\mathscr T)$ by playing $n_{k_1}$. Let $\beta_2 < \beta_1$
be the response of $\tau$. Player {\rm I} then starts yet another run of $\mathcal G_{\alpha}(\mathscr S)$
in which he plays pairs of the form $(\beta_2,\xi)$, for $\xi \leq \alpha$. 
Continuing in this way, we obtain increasing blocks of integers $B_0, B_1,\ldots$. 
Each block $B_i$ is of the form $\{ n_{k_{i-1}+1}, \ldots,n_{k_i}\}$. Here, we set by convention
$k_{-1}=-1$.  We have that $B_i\notin \mathscr S$, for each $i$. 
Since $\tau$ is a winning strategy for Player {\rm I} in $\mathcal G_{\beta}(\mathscr T)$,
by the time Player {\rm I} reaches  $(0,0)$, Player {\rm II} has played $l$ blocks $B_0,\ldots, B_{l-1}$
such that  $R=\{ n_{k_0}, \ldots, n_{k_{l-1}}\} \notin \mathscr T$. 
We claim that $\bigcup_{i<l}B_i \notin  \mathscr S \oplus \mathscr T$.
Indeed, suppose it could be written as  $S\cup T$, for some $S\in \mathscr S$ and $T \in \mathscr T$.  
Since $B_i\notin \mathscr S$,  there must be an element $m_i\in B_i\setminus S$, for all $i <l$.
Let $P=\{ m_0,\ldots, m_{l-1}\}$. Then we must have that $P\subseteq T$. Since $\mathscr T$ is hereditary, 
we would have  that $P\in \mathscr T$, as well. Now, note that $P\leq_s R$. Since $\mathscr T$ is
also spreading, we would get that $R\in \mathscr T$, a contradiction. 

\end{proof}

\begin{coro}\label{rank-sums} Suppose $\mathscr S$ is an admissible family and let
	 $\alpha=\rho(\mathscr S)$.
Let $\| \cdot \|_{\mathscr S}$ be the associated norm. Suppose $n>0$ is an integer and let
$\mathscr S^n= \{ F\in [\mathbb N]^{<\omega} : \| F\|_{\mathscr S} \leq n\}$. 
Then $\rho(\mathscr S^n)\leq (\alpha +1)^n-1$.
\qed
\end{coro}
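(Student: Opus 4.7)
The plan is to show that $\mathscr{S}^n$ coincides with the $n$-fold sumset $\mathscr{S}^{\oplus n}:=\underbrace{\mathscr{S}\oplus\cdots\oplus \mathscr{S}}_{n}$ and then iterate Lemma \ref{oplus} on $n$.

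First I would record the elementary fact that because $\mathscr{S}$ is hereditary and contains all singletons, it contains $\emptyset$. Consequently, a finite set $F\subseteq \mathbb{N}$ belongs to $\mathscr{S}^{\oplus n}$ if and only if it can be written as a union of $n$ members of $\mathscr{S}$ (some of which may be empty or repeated), which is exactly the condition $\|F\|_{\mathscr{S}}\leq n$. Hence $\mathscr{S}^n=\mathscr{S}^{\oplus n}$, and in particular $\mathscr{S}^n$ is again admissible by iterated application of the comment preceding Lemma \ref{oplus}.

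Next I would proceed by induction on $n$. For $n=1$ we have $\mathscr{S}^1=\mathscr{S}$, so $\rho(\mathscr{S}^1)=\alpha=(\alpha+1)^1-1$. Assume inductively that $\rho(\mathscr{S}^n)\leq (\alpha+1)^n-1$. Writing $\mathscr{S}^{n+1}=\mathscr{S}^n\oplus \mathscr{S}$ and applying Lemma \ref{oplus} with $\beta:=(\alpha+1)^n-1$ in place of the rank of $\mathscr{S}^n$ yields
\[
\rho(\mathscr{S}^{n+1})\leq (\beta+1)(\alpha+1)-1=(\alpha+1)^n(\alpha+1)-1=(\alpha+1)^{n+1}-1,
\]
completing the induction.

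The only subtle point — and really the main (minor) obstacle — is that Lemma \ref{oplus} is stated under the hypothesis that the ranks of the two families are exactly $\alpha$ and $\beta$, while in the inductive step I only have the upper bound $\rho(\mathscr{S}^n)\leq (\alpha+1)^n-1$. I would remark that the lemma's conclusion is monotone in the rank parameters: if Player I wins $\mathcal{G}_\gamma(\mathscr{F})$ then he also wins $\mathcal{G}_{\gamma'}(\mathscr{F})$ for every $\gamma'\geq \gamma$ (simply ignore the higher ordinals), so the proof of Lemma \ref{oplus} goes through verbatim when its hypotheses are weakened to upper bounds on the ranks. With that observation the induction is complete. \qed
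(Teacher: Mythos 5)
Your proof is correct and is exactly the argument the paper intends (the corollary is stated without proof as an immediate consequence of Lemma~\ref{oplus}): identify $\mathscr S^n$ with the $n$-fold $\oplus$-power of $\mathscr S$ and iterate the lemma, using that $(\alpha+1)^n\cdot(\alpha+1)=(\alpha+1)^{n+1}$. Your two side remarks --- that heredity (plus $\emptyset\in\mathscr S$) makes ``covered by $n$ members'' the same as ``a union of $n$ members,'' and that Lemma~\ref{oplus} applies verbatim when the ranks are replaced by upper bounds since Player~I's winning strategies transfer to games with larger starting ordinals --- are precisely the small points needed to make the induction airtight.
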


We now define a version of the Schreier families initially introduced in \cite{Schreier}. 
These families have played an important role in the theory of Banach spaces, see for instance 
\cite{AlAr} or \cite{GasLeu}. For applications of Schreier families in combinatorics, 
see, for instance, \cite{Farmaki}.
Since we need our families to be spreading we have to take some care in their definition.
It will be convenient to use the following lemma of Galvin, see \cite{Hoshe} or \cite{Roitman}
for a proof.

\begin{lemma}
	\label{Galvin}  There is a sequence $(<_n)_n$ of tree orderings on $\omega_1$
	such that: 
	\begin{enumerate}
		\item[$(1)$] if $n<m$ and $\xi <_n \eta$ then $\xi <_m \eta$,
		\item[$(2)$] $(\omega_1,<_n)$ has finite height, for all $n$,
		\item[$(3)$] $<\restriction \omega_1 = \bigcup_n <_n$. 
		\qed
	\end{enumerate}  
	
\end{lemma}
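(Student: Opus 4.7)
The plan is to build the $<_n$'s from a system of coherent finite approximations derived from a ladder system on $\omega_1$. First, for each countable ordinal $\alpha>0$, I would fix a sequence $(\alpha_k)_{k<\omega}$ with $\alpha_k<\alpha$: if $\alpha=\beta+1$, set $\alpha_k=\beta$ for every $k$; if $\alpha$ is a countable limit, choose $(\alpha_k)$ strictly increasing and cofinal in $\alpha$. I would then define by recursion on $\alpha$ finite sets $F_n(\alpha)\subseteq\alpha$ meant to serve as the $<_n$-predecessors of $\alpha$, and put $\beta<_n\alpha$ iff $\beta\in F_n(\alpha)$.

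The three properties I would aim to build into the $F_n(\alpha)$ are: \emph{monotonicity and exhaustion}, $F_n(\alpha)\subseteq F_{n+1}(\alpha)$ and $\bigcup_n F_n(\alpha)=\alpha$; \emph{coherence}, $\gamma\in F_n(\alpha)\Rightarrow F_n(\gamma)=F_n(\alpha)\cap\gamma$; and a \emph{uniform size bound}, $|F_n(\alpha)|\le h_n$ for some $h_n<\omega$ independent of $\alpha$. Granted these, transitivity of $<_n$ and the tree property of clause (2) follow from coherence (predecessors of $\alpha$ are automatically linearly ordered since $F_n(\alpha)\cap\gamma\subseteq F_n(\gamma)$ for every $\gamma\in F_n(\alpha)$); the inclusions $<_n\,\subseteq\,<_{n+1}$ of clause (1) and the equality $<\,=\,\bigcup_n<_n$ of clause (3) follow from exhaustion; and finite height of $(\omega_1,<_n)$, bounded by $h_n$, follows from the uniform size bound.

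The main obstacle is arranging coherence and the uniform size bound simultaneously. A naive walk-down recursion such as $F_n(\alpha)=\{\alpha_k:k\le n\}\cup\bigcup_{k\le n}F_n(\alpha_k)$ yields coherence but has $|F_n(\alpha)|$ growing with the ``ordinal complexity'' of $\alpha$, while a budget-decreasing variant like $F_n(\alpha)=\{\alpha_k:k\le n\}\cup\bigcup_{k\le n}F_{n-1}(\alpha_k)$ gives a uniform bound (on the order of $(n+1)!$) but breaks exact coherence, since the ladder of $\alpha_k$ need not match the $\alpha$-ladder below $\alpha_k$. My plan is to choose the ladder system itself recursively so that these two ladders are compatible in a controlled way --- for instance, arranging that the initial segment of $\alpha_k$'s ladder agrees with the $\alpha$-ladder up to $\alpha_k$ --- so that coherence survives the budget reduction. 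Setting up the combinatorial bookkeeping so that all three conditions hold at once is the technical heart of the argument; once this is in place the verification of clauses (1)--(3) becomes a routine induction on $\alpha$.
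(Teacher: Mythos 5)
You are attempting to prove a statement that the paper itself does not prove: Lemma \ref{Galvin} is quoted as Galvin's result and the reader is sent to the cited sources for a proof, so there is no internal argument to compare against. Judged on its own, your proposal contains a correct and useful reduction but not a proof. The reduction is fine: defining $\beta <_n \alpha$ iff $\beta \in F_n(\alpha)$, exact coherence ($F_n(\beta)=F_n(\alpha)\cap\beta$ for $\beta\in F_n(\alpha)$) gives transitivity and linearly ordered predecessor sets, monotonicity in $n$ gives (1), exhaustion gives (3), and a uniform bound $|F_n(\alpha)|\le h_n$ gives (2). But everything after that is a restatement of the problem rather than a solution. Note that exact coherence already forces $F_n(\alpha)=\{\beta\}\cup F_n(\beta)$ where $\beta=\max F_n(\alpha)$, so what you must actually produce is, for each $n$, a partial regressive map $m_n(\alpha)<\alpha$ (the immediate $<_n$-predecessor) such that iterating $m_n$ from any $\alpha$ terminates in at most $h_n$ steps with $h_n$ independent of $\alpha$, the $m_n$-orbits increase with $n$, and their union is all of $\alpha$. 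Constructing such maps in ZFC \emph{is} Galvin's lemma; your sketch defers exactly this ("the combinatorial bookkeeping \dots is the technical heart") and so proves nothing beyond the translation.

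The specific repair you gesture at also does not obviously work as described. Coherence must hold between $\alpha$ and \emph{every} element of $F_n(\alpha)$, i.e.\ along all iterated ladder points, not just the chosen $\alpha_k$; and the ladders (hence the sets $F_n(\beta)$) at ordinals $\beta<\alpha$ are already fixed when you reach $\alpha$, so "arranging that the initial segment of $\alpha_k$'s ladder agrees with the $\alpha$-ladder up to $\alpha_k$" is not something you can do at stage $\alpha$ — you would have to anticipate, when choosing $\alpha_k$'s ladder, all later limits $\alpha$ whose ladders will pass through $\alpha_k$, and distinct such $\alpha$ impose conflicting demands unless the entire system is organized globally in advance. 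Moreover, even granting some compatibility of ladders, your budget-decreasing recursion still has to be shown to yield sets that simultaneously (i) stay below a bound $h_n$ not depending on $\alpha$ (the natural obstruction being that iterated ladders produce longer chains as the "ordinal complexity" of $\alpha$ grows) and (ii) are monotone in $n$ with union $\alpha$; this is precisely the tension you identify and leave unresolved. As written, the proposal is a plan whose missing step is the lemma itself; to complete it you need to carry out the actual recursive construction at limit stages, or do as the paper does and invoke Galvin's lemma from the cited literature.
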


We fix a Galvin decomposition $(<_n)_n$ such that $0<_n \xi$, for all $0< \xi < \omega_1$ and all $n$. 

\begin{defi} [Schreier families] Suppose $0 < \alpha < \omega_1$. 
We define the family $\mathscr S_\alpha$
as the collection of all  $F \in [\mathbb N]^{< \omega}$ such that, if 
$F=\{ n_0,\ldots, n_{k-1}\}$
is the increasing enumeration, there is a sequence $(\alpha_i)_{i\leq k}$ of 
ordinals such that $\alpha_0=\alpha$ and $\alpha_{i+1} <_{n_i} \alpha_i$, for all $i <k$.
\end{defi}

\begin{lemma}\label{Schreier} The family $\mathscr S_\alpha$ is admissible, for all countable ordinals $\alpha >0$. 
\end{lemma}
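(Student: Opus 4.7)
The plan is to verify in turn the four conditions of Definition \ref{admissible-families} for $\mathscr S_\alpha$: containing singletons, hereditary, spreading, and compact. The first three fall out of the Galvin axioms almost immediately, so the real work is in compactness.

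Singletons are handled by the witness $(\alpha, 0)$, using the convention $0 <_n \alpha$. For spreading: if $F=\{a_0,\ldots,a_{k-1}\}$ has witness $(\alpha_0,\ldots,\alpha_k)$ and $F\leq_s B=\{b_0,\ldots,b_{k-1}\}$, the same $(\alpha_0,\ldots,\alpha_k)$ witnesses $B\in\mathscr S_\alpha$, because property (1) of Galvin upgrades $\alpha_{i+1}<_{a_i}\alpha_i$ to $\alpha_{i+1}<_{b_i}\alpha_i$ whenever $a_i\leq b_i$. For the hereditary property it is enough to treat removing a single element $n_j$ from $F$; the natural move is to delete $\alpha_{j+1}$ from the witness, and the only non-routine verification is $\alpha_{j+2}<_{n_{j+1}}\alpha_j$, which follows by using Galvin (1) to pull $\alpha_{j+1}<_{n_j}\alpha_j$ into $<_{n_{j+1}}$ and then applying transitivity of the tree ordering.

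For the compactness property I would argue by contradiction. Since $\mathscr S_\alpha$ is hereditary and consists of finite subsets of $\mathbb N$, failure of compactness in $2^\mathbb N$ is equivalent to the existence of an infinite set $A=\{n_0<n_1<\cdots\}$ with every initial segment in $\mathscr S_\alpha$. Fix such an $A$ and form the tree $T$ of finite sequences $(\beta_0,\ldots,\beta_k)$ satisfying $\beta_0=\alpha$ and $\beta_{i+1}<_{n_i}\beta_i$ for all $i<k$, ordered by extension. The hypothesis on $A$ places a node of $T$ at every level $k$, so $T$ is infinite.

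The hard part, and the only place the finite-height axiom of Galvin is used, is to show $T$ is finitely branching: the children of a node ending in $\beta_k$ are indexed by the $<_{n_k}$-predecessors of $\beta_k$, which form a chain in the tree $(\omega_1,<_{n_k})$ and hence are only finitely many, thanks to Galvin (2). On the other hand, any infinite branch through $T$ would descend infinitely in the standard ordinal order (by Galvin (3)), violating well-foundedness of $\omega_1$. K\"onig's lemma then forces $T$ to be finite, contradicting its unbounded levels, and compactness of $\mathscr S_\alpha$ follows.
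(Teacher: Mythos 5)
Your proof is correct. The treatment of singletons, spreading and hereditariness coincides with the paper's (the paper just asserts these via clause (1) of the Galvin lemma, while you spell out the transitivity argument needed when deleting an interior element, which is a welcome level of detail). The one genuine difference is in compactness. The paper first introduces the \emph{canonical} witness for a set $F\in\mathscr S_\alpha$, obtained greedily by always taking the largest $<_{n_i}$-predecessor; coherence of this canonical witness across initial segments of an infinite set $A$ with $A\cap n\in\mathscr S_\alpha$ for all $n$ then directly produces an infinite descending sequence of ordinals, with no appeal to K\"onig's lemma, but at the price of verifying (which the paper calls straightforward) that the greedy construction can always be continued when some witness exists. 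You instead consider the tree of \emph{all} witness sequences along $A$, use the finite-height clause (2) of the Galvin lemma to see that each node has only finitely many children (the $<_{n_k}$-predecessors of an ordinal form a finite chain), and invoke K\"onig's lemma to extract an infinite branch, which clause (3) converts into an infinite descending sequence of ordinals. Both routes are sound and use the Galvin axioms in the same places; yours trades the maximality/coherence bookkeeping of the canonical witness for an application of K\"onig's lemma, while the paper's canonical witness has the side benefit of being reused implicitly elsewhere (e.g.\ in the strategy described in Lemma \ref{Schreier-ranks}).
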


\begin{proof}
Let us first observe that if $F\in \mathscr S_\alpha$ then there is a canonical sequence
of ordinals witnessing it. Namely, suppose $F=\{ n_0,\ldots, n_{k-1}\}$ is the increasing
enumeration. We define the sequence $(\alpha_i)_{i\leq k}$ by induction as follows.
Let $\alpha_0=\alpha$. Suppose $\alpha_i$ has been defined. Since $(\omega_1,<_{n_i})$
is a tree of finite height, the set of $<_{n_i}$-predecessors of $\alpha_i$
is finite and totally ordered. If it is non-empty, we let $\alpha_{i+1}$ be the largest $<_{n_i}$-predecessor 
of $\alpha_i$. It is straightforward to check, by using (1) of Definition \ref{Galvin} and the fact that 
$F\in \mathscr S_\alpha$, that we can continue the construction up to
$k$. Notice that, also by (1) of  Definition \ref{Galvin}, the family $\mathscr S_\alpha$
is spreading and hereditary, for all $\alpha$. To see that $\mathscr S_\alpha$ is compact,
suppose $A$ is an infinite subset of $\mathbb N$ such that $A\cap n \in \mathscr S_\alpha$, for all $n$. 
Let $\{ n_0,n_1,\ldots\}$ be the increasing enumeration of $A$. Then, as before, we could construct
a sequence $(\alpha_k)_k$ of ordinals such that $\alpha_0=\alpha$ and $\alpha_{k+1}<_{n_k} \alpha_k$,
for all $k$. Then $(\alpha_k)_k$ would be an infinite decreasing sequence of ordinals, a contradiction. 
Finally, if $\alpha >0$, since we assumed that $\alpha >_n0$, for all $n$, it follows that $\mathscr S_\alpha$ contains all singletons. Therefore, $\mathscr S_\alpha$ is an admissible family.
Let us also note that if $\alpha <_n \beta$, $n < F$ and $F\in \mathscr S_\alpha$
then $F \cup \{ n\}\in \mathscr S_\beta$.
\end{proof}

\begin{lemma}\label{Schreier-ranks}
	\label{rho} $\rho(\mathscr S_\alpha)= \alpha$, for all countable ordinals $\alpha>0$.
\end{lemma}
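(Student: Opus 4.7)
The plan is to exploit the tree-rank characterization already given in the excerpt: $\rho(\mathscr{S}_\alpha)$ equals the well-founded rank of $T_{\mathscr{S}_\alpha}$, the tree of strictly increasing finite integer sequences with range in $\mathscr{S}_\alpha$, ordered by reverse extension. I would attach to each node $s=(n_0,\ldots,n_{k-1})\in T_{\mathscr{S}_\alpha}$ the canonical ordinal endpoint $\alpha_k$ coming from the inductive procedure in the proof of Lemma~\ref{Schreier}: $\alpha_0=\alpha$, and $\alpha_{i+1}$ is the largest $<_{n_i}$-predecessor of $\alpha_i$. The lemma then reduces to the single claim that the well-founded rank of $s$ inside $T_{\mathscr{S}_\alpha}$ equals $\alpha_k$; applied at $s=\emptyset$ (where $k=0$ and $\alpha_0=\alpha$) this yields $\rho(\mathscr{S}_\alpha)=\alpha$.

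I would prove the claim by well-founded induction on $T_{\mathscr{S}_\alpha}$, whose well-foundedness was established in Lemma~\ref{Schreier}. If $\alpha_k=0$, then $s$ admits no legal one-step extension, since such an extension would require $\alpha_{k+1}<_{n_k}0$ and $0$ has no predecessors; thus $s$ is a terminal node, of rank $0=\alpha_k$. If $\alpha_k>0$, then the standing assumption $0<_n\xi$ for all $n$ and all $0<\xi<\omega_1$ guarantees that every $n_k>n_{k-1}$ (and every $n_0\in\mathbb{N}$ when $s=\emptyset$) yields a child $s'=(n_0,\ldots,n_{k-1},n_k)$ of $s$ in $T_{\mathscr{S}_\alpha}$ whose canonical endpoint is $L(\alpha_k,n_k)$, the largest $<_{n_k}$-predecessor of $\alpha_k$. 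By the inductive hypothesis the rank of $s'$ is $L(\alpha_k,n_k)$, so
\[
\mathrm{rank}(s)=\sup\{L(\alpha_k,n_k)+1 : n_k>n_{k-1}\}.
\]

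The only delicate step is showing that this supremum equals $\alpha_k$. The inequality $\leq$ is immediate from $L(\alpha_k,n_k)<\alpha_k$. For $\geq$, given any $\gamma<\alpha_k$, property~(3) of the Galvin decomposition provides some $m$ with $\gamma<_m\alpha_k$, and property~(1) then forces $\gamma<_n\alpha_k$ for every $n\geq m$; so for all sufficiently large $n_k>n_{k-1}$ the ordinal $\gamma$ is a $<_{n_k}$-predecessor of $\alpha_k$ and hence $L(\alpha_k,n_k)\geq\gamma$. When $\alpha_k=\gamma+1$ is a successor this pins down $L(\alpha_k,n_k)=\gamma$ for all large $n_k$, making the sup equal to $\gamma+1=\alpha_k$; when $\alpha_k$ is a limit, letting $\gamma$ range over $\alpha_k$ shows the sup is at least $\alpha_k$. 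This closes the induction and finishes the proof.
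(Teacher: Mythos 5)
Your argument is correct, but it takes a different route from the paper's. The paper proves the two inequalities by exhibiting explicit strategies in the game $\mathcal G_\beta(\mathscr S_\alpha)$ of Definition \ref{family-game}: Player {\rm I}'s winning strategy in $\mathcal G_{\alpha}(\mathscr S_\alpha)$ is precisely ``always pass to the largest $<_{n}$-predecessor'', and Player {\rm II}'s winning strategy in $\mathcal G_{\alpha^*}(\mathscr S_\alpha)$, for $\alpha^*<\alpha$, uses properties $(1)$ and $(3)$ of Lemma \ref{Galvin} to find an $n$ witnessing the next move. You instead invoke the paper's identification of $\rho(\mathscr S_\alpha)$ with the well-founded rank of $T_{\mathscr S_\alpha}$ and compute that rank nodewise in a single transfinite induction, labelling each node with the canonical endpoint from the proof of Lemma \ref{Schreier}. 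The combinatorial inputs are the same: your canonical labels are Player {\rm I}'s strategy in disguise, and your cofinality argument (if $\gamma<_m\alpha_k$ then $\gamma<_n\alpha_k$ for all $n\ge m$, so $L(\alpha_k,n)\ge\gamma$ eventually) is exactly Player {\rm II}'s strategy. What your version buys is the exact rank of \emph{every} node of $T_{\mathscr S_\alpha}$ and both inequalities in one induction; the price is that it leans on the stated (but not proved) equivalence of the game rank with the tree rank, whereas the paper argues directly from the game. One point you should make explicit: when you claim that $\alpha_k=0$ forbids any one-step extension, and that the child obtained by appending $n_k$ has canonical endpoint $L(\alpha_k,n_k)$, you are using the fact---the ``straightforward to check'' step in the proof of Lemma \ref{Schreier}---that membership in $\mathscr S_\alpha$ is always witnessed by the canonical largest-predecessor sequence, not merely by some witnessing sequence; with that spelled out, your induction is complete.
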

\begin{proof}
Suppose $\alpha^* <\alpha$. We describe a winning strategy for Player {\rm II} in 
${\mathcal G}_{\alpha ^*}(\mathscr S_\alpha)$. We may assume that Player {\rm I} starts by playing 
 $\alpha_0 =\alpha^*$. Player {\rm II} lets $n_0$ be the least integer such that $\alpha_0 <_{n_0}\alpha$. 
At stage $k$, suppose Player {\rm I} plays some $\alpha_k <\alpha_{k-1}$.
Then Player {\rm II} lets $n_k$ be the least integer bigger than $n_{k-1}$
such that $\alpha_{k} <_{n_k} \alpha_{k-1}$. 
Since $\alpha >_{n_0}\alpha_0 > \ldots >_{n_k}\alpha_k$, it follows
that $\{ n_0,\ldots,n_k\} \in \mathscr S_\alpha$. This means that Player {\rm II}
can keep playing as long as Player {\rm I} keeps producing a decreasing sequence
of ordinals. Therefore, Player {\rm II} wins by playing in this way. 
	
Now we describe the winning strategy for Player {\rm I} in ${\mathcal G}_{\alpha}(\mathscr S_\alpha)$. 
He starts by playing $\alpha_0=\alpha$. Suppose Player {\rm II} responds
by playing some $n_0$. Since $\{ n_0\}\in \mathscr S_\alpha$ then $\alpha_0$ is
not a minimal element in $<_{n_0}$.  Let $\alpha_1$ be the largest $<_{n_0}$-predecessor
of $\alpha_0$. Player {\rm I} then plays $\alpha_1$. Suppose we are at some stage
$k$ and Player {\rm II} has played $n_{k-1}$ in the previous stage. 
Since $\{ n_0,\ldots, n_{k-1}\} \in \mathscr S_\alpha$ it follows that $\alpha_{k-1}$
is not a minimal element in $<_{n_{k-1}}$. Then Player {\rm I} plays as $\alpha_k$ 
the largest $<_{n_{k-1}}$-predecessor of $\alpha_{k-1}$. 
Since  $(\alpha_k)_k$ is a decreasing sequence of ordinals, the game must stop at some stage,
i.e. at some $k$ Player {\rm II} cannot find $n_k > n_{k-1}$ such that $\{n_0,\ldots, n_k\} \in \mathscr S_\alpha$. 
Therefore, Player {\rm I} wins by following this strategy. 
	
\end{proof}		

\begin{defi}\label{alpha-norm} We shall write $\| \cdot \|_\alpha$ for the norm derived
from the family $\mathscr S_\alpha$, for $\alpha <\omega_1$.
\end{defi}

We now turn to a different game that will be used to analyze the exhaustivity ranks
of our submeasures. 
	
\begin{defi}\label{disjoint-game} Suppose $\mathcal P$ is a poset and $\mathcal F \subseteq \mathcal P$. 
For an ordinal $\alpha$ the game $\mathcal H_{\alpha}(\mathcal F)$ is played between  players ${\rm I}$ 
and ${\rm II}$ as follows.
\[
\begin{array}[c]{ccccccccc}
{\rm I} : & \alpha_0 \phantom{n_0} & \alpha_1 \phantom{n_1}& \ \ \cdots \ \ & \alpha_k \phantom{n_k}& \ \ \cdots \ \ \\
\hline
{\rm II} : & \phantom{\alpha_0} p_0 & \phantom{\alpha_1} p_1 & \ \ \cdots \ \ & \phantom{\alpha_k} p_k  & \ \ \cdots \ \ \\
\end{array}
\]
Player {\rm I}  is required to plays decreasing sequence of ordinals $\leq \alpha$, while Player {\rm II}
plays pairwise incompatible members of $\mathcal F$. The first player who cannot play  loses.
\end{defi}
Clearly, if there is an infinite pairwise incompatible sequence of elements of $\mathcal F$ then Player {\rm II} 
has a winning strategy in $\mathcal H_{\alpha}(\mathcal F)$, for any $\alpha$. 
He simply plays the members of that sequence regardless of what Player {\rm I} plays.
If there is no such sequence of members of $\mathcal F$  then there is 
an ordinal $\alpha$ such that Player {\rm I} has a winning strategy. Let $\delta(\mathcal F)$
be the least such $\alpha$. In other words, if $\mathcal D(\mathcal F)$ is the family
of  pairwise incompatible finite subsets of $\mathcal F$ then $\delta(\mathcal F)$
is equal to ${\rm rk}(\mathcal D(\mathcal F))$, i.e. the rank of $\mathcal D(\mathcal F)$ under reverse inclusion.

In the next lemma and in \S 5 we will use the natural sum of ordinals, see \cite{Sierpinski}.
Recall that for ordinals $\alpha$ and $\beta$, the {\em natural sum} of $\alpha$ and $\beta$,
denoted by $\alpha \oplus \beta$ is defined by simultaneous induction on $\alpha$ and $\beta$
as the smallest ordinal greater than $\alpha \oplus \gamma$, for all $\gamma < \beta$, 
and $\gamma \oplus \beta$, for all $\gamma <\alpha$. 
Another way to define the natural sum  of two ordinals $\alpha$ and $\beta$
is to use the Cantor normal form: one can find a sequence of ordinals $\gamma_0 > \ldots \gamma_{n-1}$ 
and two sequences ($k_0,\ldots, k_{n-1})$ and $(j_0,\ldots, j_{n-1})$
of natural numbers (including zero, but satisfying $k_i + j_i > 0$, for all $i$) such that
$\alpha = \omega^{\gamma_0}\cdot k_0 + \cdots +\omega^{\gamma_{n-1}}\cdot k_{n-1}$
and $\beta = \omega^{\gamma_0}\cdot j_0 + \cdots +\omega^{\gamma_{n-1}}\cdot j_{n-1}$
and defines

\[
\alpha \oplus \beta = \omega^{\gamma_0}\cdot (k_0+j_0) + \cdots +\omega^{\gamma_{n-1}}\cdot 
(k_{n-1}+j_{n-1}). 
\]

What is important for us is that the natural sum is associative and commutative. It is always greater or equal 
to the usual sum, but it may be strictly greater.

\begin{defi}
Let $\mathcal P$ be the poset  of all partial functions $u$ such that $\dom(u)\in [\mathbb N]^{<\omega}$ and $u(k) < 2^k$, 
for all $k \in \dom(u)$, ordered under reverse inclusion. 
For $0<\alpha<\omega_1$, let  $\mathcal P_\alpha$ be the set of all $u \in \mathcal P$
with  $\|  \dom (u) \|_\alpha  \leq 1$. 
\end{defi}

\begin{lemma}\label{Rank} Suppose  $0< \alpha< \omega_1$. Then $\delta(\mathcal P_\alpha) =\omega^\alpha$.
\end{lemma}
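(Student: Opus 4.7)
My plan is to prove $\delta(\mathcal{P}_\alpha) = \omega^\alpha$ by transfinite induction on $\alpha$, showing both inequalities separately. The base case $\alpha = 1$ is direct: since $\mathscr{S}_1 = \{\emptyset\} \cup \{\{n\} : n \in \mathbb{N}\}$, any pairwise incompatible subset of $\mathcal{P}_1$ lives inside a single ``column'' $\{u : \dom(u) = \{n\}\}$ of size $2^n$, and therefore $\delta(\mathcal{P}_1) = \sup_n 2^n = \omega$.

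For the lower bound $\delta(\mathcal{P}_\alpha) \geq \omega^\alpha$, given $\beta < \omega^\alpha$ I exhibit a winning strategy for Player II in $\mathcal{H}_\beta(\mathcal{P}_\alpha)$. Since $\omega^\alpha = \sup\{\omega^\gamma \cdot 2^n : \gamma < \alpha,\, n \in \mathbb{N}\}$ and every $\gamma < \alpha$ satisfies $\gamma <_n \alpha$ for some $n$, I pick $\gamma < \alpha$ and $n \in \mathbb{N}$ with $\gamma <_n \alpha$ and $\omega^\gamma \cdot 2^n > \beta$. By the inductive hypothesis Player II has a winning strategy in $\mathcal{H}_{\omega^\gamma}(\mathcal{P}_\gamma)$; using the spreading property of $\mathscr{S}_\gamma$ I may shift so that all of Player II's moves have domain contained in $(n, \infty)$. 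Player II now runs $2^n$ parallel copies of this strategy, prepending the pair $(n, c)$ to every move in the $c$-th copy; the final remark of the proof of Lemma \ref{Schreier} ensures each prepended domain lies in $\mathscr{S}_\alpha$. Moves from different copies are incompatible at level $n$, while moves within a single copy inherit incompatibility from the inductive strategy. Because the copies are independent, the rank of the combined game equals the natural sum $\bigoplus_{c < 2^n} \omega^\gamma = \omega^\gamma \cdot 2^n > \beta$.

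For the upper bound $\delta(\mathcal{P}_\alpha) \leq \omega^\alpha$, Player I opens with $\omega^\alpha$, and it suffices to show that $\mathrm{rk}(\{u\}) < \omega^\alpha$ for every $u \in \mathcal{P}_\alpha \setminus \{\emptyset\}$. Letting $\dom(u) = \{n_0 < \cdots < n_{k-1}\}$ with canonical sequence $\alpha = \alpha_0 >_{n_0} \alpha_1 > \cdots > \alpha_k$, I decompose the future moves $v$ (each incompatible with $u$) by their first-disagreement data $(i(v), v(n_{i(v)}))$, where $i(v) = \min\{i : n_i \in \dom(v),\, v(n_i) \neq u(n_i)\}$. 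For the $(i, c)$-class, the sub-sequence property of canonical sequences implicit in Lemma \ref{Schreier} places the ``above-$n_i$'' part of $\dom(v)$ in $\mathscr{S}_{\alpha_{i+1}}$, so by induction the sub-game has rank $\leq \omega^{\alpha_{i+1}}$. Combining the finitely many classes via natural sums in the bookkeeping style of Lemma \ref{oplus} gives
\[
\mathrm{rk}(\{u\}) \;\leq\; \bigoplus_{i < k} \omega^{\alpha_{i+1}} \cdot (2^{n_i} - 1) \;\leq\; \omega^{\alpha_1} \cdot 2^{n_0} \;<\; \omega^{\alpha_1 + 1} \;\leq\; \omega^\alpha,
\]
where the last inequality uses $\alpha_1 < \alpha$. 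Taking the supremum over $u$ gives $\delta(\mathcal{P}_\alpha) \leq \omega^\alpha$.

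The main obstacle is the clean reduction in the upper bound: moves from different $(i, c)$-classes are not automatically pairwise incompatible (two moves can occupy disjoint tails of their domains), so the decomposition does not literally split the game into independent sub-games. The rigorous argument must track Player I's strategy through parallel virtual sub-games coordinated via the natural-sum bookkeeping of Lemma \ref{oplus}, and must also absorb any future moves that use levels below $n_0 = \min\dom(u)$ into an auxiliary sub-game of matching Schreier rank. Verifying that this bookkeeping really bounds $\mathrm{rk}(\{u\})$ by the displayed natural sum is the technical heart of the proof.
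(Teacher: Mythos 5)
Your upper bound contains the decisive gap, and it is not the one you flag. For the natural-sum bookkeeping in the style of Lemma \ref{oplus} you never need moves from \emph{different} classes to be incompatible; what you do need is that moves falling in the \emph{same} class project to pairwise incompatible conditions of a simpler family $\mathcal P_\gamma$, so that the inductive hypothesis applies to that class and its ordinal can be forced to drop. With your classes $(i(v),v(n_{i(v)}))$ this fails: two later moves $v,v'$ in the same class agree at $n_{i}$, and at any $n_{i'}\in\dom(u)$ with $i'<i$ that they contain they agree with $u$ and hence with each other, so their incompatibility may be witnessed only at a coordinate $<n_i$ lying outside $\dom(u)$. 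Then $v\restriction (n_i,\infty)$ and $v'\restriction (n_i,\infty)$ can be compatible (even equal), the projected sequence is not a legal play for Player II in any game $\mathcal H(\mathcal P_\gamma)$, and the class ordinal has no reason to decrease; the ``technical heart'' you defer cannot be completed with this class structure, since the below-$n_i$ data outside $\dom(u)$ ranges over infinitely many possibilities. The repair is to classify by the full trace on a fixed initial segment: choose $m$ with $\dom(u)\subseteq m$, observe that every $v$ incompatible with $u$ has $\dom(v)\cap m\neq\emptyset$ and $v\restriction[m,\omega)\in\mathcal P_\beta$, where $\beta$ is the largest $<_m$-predecessor of $\alpha$, and classify $v$ by $v\restriction m$ (finitely many classes). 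Within a class the moves agree below $m$, so incompatibility passes to the tails, and Player I runs one copy of his winning strategy in $\mathcal H_{\omega^\beta}(\mathcal P_\beta)$ per class and plays natural sums starting from $\omega^\beta\cdot t<\omega^\alpha$ --- this is exactly the paper's argument. Separately, your intermediate claim that the above-$n_i$ part of $\dom(v)$ lies in $\mathscr S_{\alpha_{i+1}}$, with $\alpha_{i+1}$ read off the canonical sequence of $u$, is false: if for instance $\dom(v)\cap n_i=\emptyset$, all one can conclude is membership in $\mathscr S_{\beta}$ for $\beta$ the largest $<_{n_i}$-predecessor of $\alpha$, which may lie strictly above $\alpha_{i+1}$ in the tree $<_{n_i}$; this slip would be harmless (one still has $\beta<\alpha$) if the decomposition itself were sound.

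In the lower bound you misquote the induction hypothesis: $\delta(\mathcal P_\gamma)=\omega^\gamma$ means Player \textrm{I}, not Player \textrm{II}, wins $\mathcal H_{\omega^\gamma}(\mathcal P_\gamma)$; the hypothesis only provides, for each $\xi<\omega^\gamma$, a ($\xi$-dependent) winning strategy for Player \textrm{II}. So you cannot literally ``run $2^n$ parallel copies of this strategy.'' The intended conclusion survives if you argue at the level of ranks: the $2^n$ families $\{\{(n,c)\}\cup u: u\in\mathcal P_\gamma,\ \min\dom(u)>n\}$ are pairwise cross-incompatible, each has $\delta\geq\omega^\gamma$ by the inductive hypothesis together with the spreading shift, and the rank of a cross-incompatible union is at least the natural sum of the ranks (a standard fact you would still need to prove), giving $\delta(\mathcal P_\alpha)\geq\omega^\gamma\cdot 2^n>\beta$. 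The paper instead avoids both the misstatement and the auxiliary rank lemma by decomposing Player \textrm{I}'s ordinal as $\omega^\gamma\cdot n_0+\eta$ and invoking Player \textrm{II}'s strategies only at the ordinals $\eta<\omega^\gamma$, changing the value at the fixed coordinate each time the coefficient of $\omega^\gamma$ drops. So the lower half is repairable; the genuine gap is the upper bound described above.
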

\begin{proof}  We will give a proof by induction. First note that, since $\mathscr S_\beta$
is spreading, for all $\beta$, if  Player {\rm II} has a winning strategy in $\mathcal H_\gamma(\mathcal P_\beta)$ 
for some   $\gamma$, then for every integer $n$, Player {\rm II} has a winning strategy in the same game in which 
he plays  partial functions $u\in \mathcal P_\beta$ with $\min (\dom (u)) >n$. Now, suppose $\alpha$ is a countable ordinal
and the statement is true for all $\beta <\alpha$. 
	
Suppose $\xi < \omega^\alpha$. We describe informally a winning strategy for Player {\rm II}
in $\mathcal H_{\xi}(\mathcal P_\alpha)$. 
We may assume that Player {\rm I}'s starts by playing $\xi_0 =\xi$. 
We first find $\beta <\alpha$ and an integer $n_0$ such that 
$\xi_0 = \omega^\beta\cdot n_0 + \eta_0$,
for some $\eta_0 < \omega^\beta$. We can then find an integer $m$ such that $2^m > n_0$ and
$\beta <_m\alpha$. Note that if $F\in \mathscr S_\beta$
and $m < \min(F)$ then $\{m \} \cup F \in \mathscr S_\alpha$. Fix a winning strategy
$\tau_0$ for Player {\rm II} in $\mathcal H_{\eta_0}(\mathcal P_\beta)$ in which
he plays only partial functions $u\in \mathcal P_\beta$ with $\min(\dom(u)) >m$. Now, let $u_0$
be the response of $\tau_0$ if Player {\rm I} plays $\eta_0$ in the game 
$\mathcal H_{\eta_0}(\mathcal P_\beta)$. Player {\rm II} then plays 
$v_0=\{ (m,n_0)\} \cup u_0$. Note that if $F_0=\dom (u_0)$ then $F_0\in \mathscr S_\beta$
and hence $\{m\} \cup F_0\in \mathscr S_\alpha$. In other words $v_0\in \mathcal F_\alpha$. 
As long as  Player {\rm I} plays ordinals $\xi_i$ of the form $\omega^\beta\cdot n_0 + \eta_i$,
for some $\eta_i$, Player {\rm II} simulates the run of the game $\mathcal H_{\eta_0}(\mathcal P_\beta)$ in
which Player {\rm I} plays the $\eta_i$. At stage $i$, if $u_i$ is the response of $\tau_0$ in 
that game he plays $v_i=\{ (m,n_0)\} \cup u_i$ in the current game. 
Suppose that at some stage $i$ Player {\rm I} plays an ordinal $\xi_i$ of the form 
$\omega^\beta \cdot n_1 + \eta_i$ for some $n_1<n_0$ and $\eta_i <\omega^\beta$. Fix a winning
strategy $\tau_1$ for Player II in $\mathcal H_{\eta_i}(\mathcal P_\beta)$ in which he plays
only partial functions $u\in \mathcal P_\beta$ with $\min (\dom (u)) >m$. Let $u_i$ 
be the reply of $\tau_1$ if Player I starts by playing $\eta_i$ in $\mathcal H_{\eta_i}(\mathcal P_\beta)$. 
Then Player II plays $v_i = \{ (m,n_1)\} \cup u_i$ in the current game. As before, we have that $v_i \in \mathcal P_\alpha$. 
Proceeding in this way, Player {\rm II} plays pairwise incompatible members of $\mathcal P_\alpha$ as long
as the game last. Thus Player {\rm II} has a winning strategy in $\mathcal H_{\xi}(\mathcal P_\alpha)$, as desired. 
	 
 We now show that Player {\rm I} has a winning strategy in $\mathcal H_{\omega^\alpha}(\mathcal P_\alpha)$. 
 Of course, Player {\rm I} starts by playing $\omega^\alpha$. Suppose Player {\rm II} responds by playing some 
 $u_0\in \mathcal P_\alpha$. Fix an integer $m$ such that $\dom(u_0)\subseteq m$ and let $\beta$ be the immediate $<_m$-predecessor of $\alpha$. First note that if $u\in \mathcal P_\alpha$ is incompatible with 
$u_0$ then $\dom (u)\cap m \neq \emptyset$ and $\dom (u)\setminus m\in \mathcal P_\beta$. 
Let $\mathcal D$ be the set of all $s\in \mathcal P_\alpha$ which are nonempty and such that $\dom (s) \subseteq m$. 
Note that $\mathcal D$ is finite. Let $t$ be the cardinality of $\mathcal D$ and let $\{ s_0,\ldots, s_{t-1}\}$ 
be an enumeration of $\mathcal D$. By the inductive assumption, there is a winning strategy, say $\tau$, for 
Player {\rm I} in $\mathcal H_{\omega^\beta}(\mathcal P_\beta)$. On the side, Player {\rm I}  starts $t$ runs of 
$\mathcal H_{\omega^\beta}(\mathcal P_\beta)$ simultaneously in which he simulates the moves of Player {\rm II} and
uses the responses of $\tau$ in order to produce a move in ${\mathcal H}_{\omega^\alpha}(\mathcal P_\alpha)$.
We may assume that the first move of $\tau$ is $\omega^\beta$. Player {\rm I} then plays 
$\omega^\beta \cdot t$ in ${\mathcal H}_{\omega^\alpha}(\mathcal P_\alpha)$.  At stage $i$ suppose Player {\rm II} plays 
$u_i\in \mathcal P_\alpha$ that is incompatible with the $u_j$, for $j<i$. 
In particular,  $u_i$ is incompatible with $u_0$ and hence $\dom (u) \cap m \neq \emptyset$. 
Let $\xi_{k,i-1}$ be the latest ordinal played by $\tau$ in the $k$-th run of $\mathcal H_{\omega^\beta}(\mathcal P_\beta)$.
Let $r<t$ be such that $u_i \restriction m = s_{r}$. Player {\rm I} then considers the $r$-th
run of $\mathcal H_{\omega^\beta}(\mathcal P_\beta)$ and simulates a move of Player {\rm II}
in that game by playing $u_i \restriction [m,\omega)$. Note that $u_i \restriction [m,\omega)\in \mathcal P_\beta$ 
and is incompatible with 	 $u_j \restriction [m,\omega)$,
for all $j<i$ such that $u_j\restriction m =s_r$. Thus, $u_i \restriction [m,\omega)$
is a legitimate move by Player {\rm II} in that position of $\mathcal H_{\omega^\beta}(\mathcal P_\beta)$.
Let $\xi_{r,i}$ be the response of $\tau$. For all $k\neq r$, Player {\rm I} considers that no move is made in the $k$-th copy 
of $\mathcal H_{\omega^\beta}(\mathcal P_\beta)$ and sets 
$\xi_{k,i}=\xi_{k,i-1}$. Finally, in $\mathcal H_{\omega^\alpha}(\mathcal P_\alpha)$, 
Player {\rm I} plays
	 \[ 
	 \xi_i = \xi_{0,i} \oplus \xi_{1,i} \oplus \ldots \oplus \xi_{t-1,i}.
	 \]
	 Since $\xi_{r,i}<\xi_{r,i-1}$ and $\xi_{k,i}=\xi_{k,i-1}$, for all $k\neq r$, it follows
	 that $\xi_i < \xi_{i-1}$. Since $\tau$ is a winning strategy for Player {\rm I} in 
$\mathcal H_{\omega^\beta}(\mathcal P_\beta)$, it follows that Player {\rm I} can continue playing in this way as long
as Player {\rm II} plays pairwise incompatible members of $\mathcal P_\alpha$. Hence, this is a winning strategy for 
Player {\rm I} in $\mathcal H_{\omega^\alpha}(\mathcal P_\alpha)$, as required.   
\end{proof}
 
We shall need a version of the following lemma due to Roberts \cite{Ro:Maharam}. 

\begin {lemma}[Roberts' Selection Lemma]\label{Roberts} Let $\Vert \cdot \Vert$ be an admissible norm.
Suppose $s,t$ are integers and $I_l$ is a finite subset of $\mathbb N$ with $\Vert I_l\Vert \geq st$,
for all $l <s$. Then there is a permutation $\pi$ of $\{ 0,\ldots s-1\}$ and sets $J_i\subseteq I_{\pi(i)}$, for all $i<s$, such that  $J_1<J_2<\dots <J_s$ and $\Vert J_i\Vert =t$, for all $i<s$.

\end{lemma}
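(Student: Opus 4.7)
The plan is to proceed by induction on $s$, relying only on axioms $(1)$--$(3)$ of a norm; the spreading and unboundedness properties of admissibility will not actually be needed. The engine of the argument is a discrete continuity observation: from subadditivity and $\|\{n\}\| = 1$ one gets $\|A\| - 1 \leq \|A \setminus \{x\}\| \leq \|A\|$ for any $x \in A$, so the function $k \mapsto \|A \cap [0,k]\|$ is nondecreasing and rises by at most one at each step. Hence it attains every integer value between $0$ and $\|A\|$. The base case $s = 1$ is immediate: such a $k$ exists with $\|I_0 \cap [0,k]\| = t$, and $J_0 := I_0 \cap [0,k]$ works.

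For the inductive step the idea is to pick the first block $J_1$ so that its maximum element $N$ is as small as possible; then for every other index $l$ at most $t$ units of norm of $I_l$ will have been ``consumed'' below $N$, leaving enough above. Concretely, for each $l < s$ let $n_l$ be the least integer with $\|I_l \cap [0, n_l]\| \geq t$, which exists because $\|I_l\| \geq st \geq t$. Minimality of $n_l$ together with the one-step-at-a-time bound force $\|I_l \cap [0, n_l]\| = t$ and $n_l \in I_l$. I would let $l^*$ minimize $n_l$ and set $N := n_{l^*}$, $\pi(1) := l^*$, $J_1 := I_{l^*} \cap [0, N]$, so $\|J_1\| = t$ and $\max J_1 = N$.

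For each $l \neq l^*$, subadditivity gives
\[
\|I_l \cap (n_l, \infty)\| \;\geq\; \|I_l\| - \|I_l \cap [0, n_l]\| \;\geq\; st - t \;=\; (s-1)t,
\]
and since $n_l \geq N$ by the choice of $l^*$, monotonicity yields $\|I_l \cap (N, \infty)\| \geq (s-1)t$. Applying the inductive hypothesis to the $s-1$ sets $I_l \cap (N, \infty)$ for $l \neq l^*$ (with the same $t$) produces $J_2 < \cdots < J_s$, each of norm $t$, contained in distinct $I_l$'s restricted to $(N, \infty)$. Since $\max J_1 = N < \min J_2$, we obtain the required $J_1 < J_2 < \cdots < J_s$, and extending $\pi$ by the permutation from the inductive application completes the construction.

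The only genuine decision is identifying the right quantity to minimize over $l$, namely $n_l$ rather than, say, $\max I_l$ or $\min I_l$; I expect this step to be the conceptual obstacle, while everything else reduces to bookkeeping with subadditivity and monotonicity.
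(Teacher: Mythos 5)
Your proof is correct and takes essentially the same route as the paper's (which follows Roberts' original argument): pick the least cutoff at which some $I_l$ first accumulates norm exactly $t$ (using the fact that the norm grows by at most one per added point), observe via subadditivity that every other $I_l$ retains norm at least $(s-1)t$ above that cutoff, and continue. Your induction on $s$ simply repackages the paper's explicit greedy construction of the intervals $[k_{i-1},k_i)$.
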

\begin{proof}
We essentially repeat the original argument. 
We define integers $k_i$ and $\pi(i)$, with $\pi(i)<s$, and sets $J_i\subseteq I_{\pi(i)}$ by induction on $i<s$. 
Ty begin, by (1) of Definition~\ref{admissible-norms}, we can find the least integer $k_0$ such that 
$\Vert I_l \cap k_0\Vert=t$, for some $l<s$. We let $\pi(0)$ be the least such $l$
and let $J_0=I_{\pi(0)} \cap k_0$. Note that, again by (1) of Definition~\ref{admissible-norms},
$\Vert I_l \setminus k_0 \Vert \geq (s-1)t$, for all $i\neq \pi(0)$. 
Having defined $k_j$ and $\pi(j)$, for all $j<i$,  
let ${k_{i}}$ be the least integer such that $\Vert I_l \cap [k_{i-1},k_i) \Vert \geq t$,
for some $l\neq \pi(0),\ldots, \pi(i-1)$. Let $\pi(i)$ be the least such $l$ and let 
$J_i= I_{\pi(i)}\cap [k_{i-1},k_i)$. We can clearly continue the construction for all $i<s$.
\end{proof}

We shall also need the following simple lemma which is the main reason why we require
our admissible norms to be spreading. 

\begin{lemma}\label{zapping} Let $\| \cdot \|$ be an admissible norm. Suppose  $C,D\subseteq \mathbb N$ 
are such that  $\| C \|\geq 3$ and $\| D\|=1$. Then there are consecutive elements
$c,d$ of $C$ such that $[c,d)\cap D=\emptyset$. 
\end{lemma}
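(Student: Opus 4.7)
The plan is to argue by contradiction. Suppose that for every pair of consecutive elements $c<d$ in $C$, the half-open interval $[c,d)$ meets $D$. Enumerate $C$ increasingly as $c_0<c_1<\cdots <c_{n-1}$. For each $i<n-1$, pick some $d_i\in D\cap [c_i,c_{i+1})$. Since the intervals $[c_i,c_{i+1})$ are pairwise disjoint, the elements $d_0,\dots,d_{n-2}$ are distinct, and by construction $c_i\leq d_i$ for all $i<n-1$.

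Now set $C'=\{c_0,\dots,c_{n-2}\}$ and $D'=\{d_0,\dots,d_{n-2}\}$. Both sets have the same cardinality, and their increasing enumerations satisfy $c_i \leq d_i$, so $C'\leq_s D'$ in the sense of Definition~\ref{spreading}. Since $\|\cdot\|$ is spreading, this yields $\|C'\|\leq \|D'\|$. On the one hand, $D'\subseteq D$ and monotonicity give $\|D'\|\leq \|D\|=1$. On the other hand, $C=C'\cup\{c_{n-1}\}$ combined with subadditivity of $\|\cdot\|$ and $\|\{c_{n-1}\}\|=1$ gives $\|C\|\leq \|C'\|+1$, hence $\|C'\|\geq \|C\|-1\geq 2$. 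Thus $2\leq \|C'\|\leq \|D'\|\leq 1$, a contradiction.

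There is nothing really delicate here: every ingredient is an immediate consequence of the axioms for an admissible norm in Definition~\ref{admissible-norms}. The only point worth being careful about is that the contradiction needs the gap of at least $2$ between $\|C\|$ and $\|D\|$, which is exactly why the hypothesis $\|C\|\geq 3$ (rather than $\|C\|\geq 2$) is imposed; the subadditivity cost of removing the last point of $C$ is $1$, and the monotonicity bound on $D'$ gives $1$, so we need the initial norm of $C$ to strictly exceed $1+1$. The role of the spreading property is equally transparent: it is precisely what lets us replace the witnesses $d_i\in D$ by the smaller points $c_i\in C$ without increasing the norm, which is the pivotal step of the argument.
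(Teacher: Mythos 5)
Your argument is internally consistent with clause $(5)$ of Definition~\ref{admissible-norms} exactly as it is printed, but that printed inequality is evidently a typo, and your proof uses it in the one direction in which it fails for the norms the paper actually cares about. The intended spreading property is that pushing a set to the right cannot \emph{increase} its norm: if $A\leq_s B$ then $\|B\|\leq\|A\|$. This is the property enjoyed by the covering norms $\|\cdot\|_{\mathscr S}$ derived from spreading families (in particular by the Schreier norms $\|\cdot\|_\alpha$, which are the only norms to which the lemma is ever applied): a cover of $A$ by members of $\mathscr S$ transfers, via the order isomorphism onto $B$ and the spreading of $\mathscr S$, to a cover of $B$ by the same number of members. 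It is also the direction needed for the remark after Definition~\ref{admissible-families} that $\{F:\|F\|\leq 1\}$ is a spreading family. For a concrete failure of the printed direction, take the admissible family $\{F:|F|\leq \min F+1\}$: then $\|\{0,1\}\|=2>1=\|\{1,2\}\|$ even though $\{0,1\}\leq_s\{1,2\}$. Under the intended reading, your pivotal step --- deducing $\|C'\|\leq\|D'\|$ from $C'\leq_s D'$ --- is false in general; you only get $\|D'\|\leq\|C'\|$, which together with $\|D'\|\leq 1$ yields nothing, and the contradiction evaporates.

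The repair is the mirror image of what you did, and it is the paper's own proof: keep the witnesses $e_i\in D\cap[c_i,c_{i+1})$, but delete the \emph{first} element of $C$ rather than the last. Setting $C'=C\setminus\{c_0\}$ and $E=\{e_i:i<t-1\}$, one has $e_i<c_{i+1}$, so $E\leq_s C'$, and the (correct) spreading direction gives $\|C'\|\leq\|E\|\leq\|D\|=1$; subadditivity then yields $\|C\|\leq\|\{c_0\}\|+\|C'\|\leq 2$, contradicting $\|C\|\geq 3$. So your bookkeeping (disjointness of the intervals, equal cardinalities, why the threshold must be $3$ and not $2$) is fine; what has to change is the comparison: the surviving part of $C$ must sit pointwise \emph{above} the witnesses from $D$, not below them.
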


\begin{proof}  Let $t$ be the cardinality of $C$ and let $\{ c_i : i < t\}$ be the increasing enumeration of $C$. 
Suppose $D \cap [c_i,c_{i+1})\neq \emptyset$, for all $i<t-1$, and pick $e_i \in D \cap [c_i,c_{i+1})$, for all $i< t-1$. 
Let $C'=C\setminus \{ c_0\}$ and $E=\{ e_i: i <t-1\}$.
Since $E\subseteq D$ and $\| D\|=1$ it follows that $\| E \| \leq 1$. Since $E\leq_s C'$ and $\| \cdot \|$ 
is spreading we also get that $\| C'\|\leq 1$. Now, $\| \{ c_0\} \|=1$, hence, by subadditivity of $\| \cdot \|$
we get that $\| C \| \leq 2$, a contradiction.
\end{proof}

\section{Talagrand's construction revisited}

In this section we associate to each admissible norm $\Vert \cdot \Vert$ an exhaustive submeasure
on a countable atomless Boolean algebra. The construction generalizes the one of Talagrand \cite{Talagrand},
which itself builds on previous work of Roberts \cite{Ro:Maharam} and Farah  \cite{Fa:Examples}.
In our case  special care has to be taken  in order  to take into account the fact that $\Vert \cdot \Vert$ 
is only subadditive rather than additive.
We start by describing the topological space and Boolean algebra that we will work with.

Let $T=\prod_n2^n$. For $n\in \mathbb N$, let $\mathcal{B}_n$ denote the algebra of subsets of $T$ that depend only on the coordinates  $< n$. Then $\mathcal{B}=\bigcup_n \mathcal{B}_n$ is the algebra of clopen subsets of $T$.
We denote by $\mathcal{A}_n$ the set of atoms of $\mathcal{B}_n$ and call them the atoms of rank $n$.
For $X\seq T$ we will write 
\[
[X]_n=\bigcap\{B\in \B_n:X\seq B\}=\bigcup \{A\in \A_n :A\cap X\neq \emptyset\}
\]
to describe the smallest clopen set in $\B_n$ containing $X$. We also write ${\rm int}_n(X)$
for the largest clopen set in $\B_n$ contained in $X$, i.e.
\[
{\rm int}_n(X) = \bigcup \{ A\in \mathcal A_n: A \subseteq X\}.
\]
Let us recall that $\mathcal P$ denotes the collection of all partial functions $u$ such that $\dom(u)\in [\mathbb N]^{<\omega}$ 
and $u(k)<2^k$, for all  $k\in \dom(u)$. If $u\in \mathcal P$ we let $N_u=\{ x\in T: u \subseteq x\}$. 
Then $\mathcal A_n$ is precisely the set of the $N_u$, for $u \in \mathcal P$ with	 $\dom(u)=n$. 

Fix, for the rest of this and the next section, an admissible norm $\Vert \cdot \Vert$. Our goal is
to define a positive exhaustive submeasure $\nu:\mathcal B\rightarrow [0,+\infty]$ 
such that $\nu(N_u)\geq 8$, for  all $u\in \mathcal P$ with $\Vert \dom(u) \Vert \leq 1$.
If $\Vert \cdot \Vert_{\alpha}$ is the admissible norm derived from the family $\mathscr S_\alpha$
from the previous section, by Lemma~\ref{Rank} we have that ${\rm rk}(\nu) \geq \omega^\alpha$. In
the last section we will also give an upper bound on ${\rm rk}(\nu)$.

In order to define our submeasures we will use  classes $\mathcal{F}$ of marked weighted sets, objects that have three components: the first one is a clopen subset of $T$, the second one is a finite set of coordinates and the third is a nonnegative real called the {\em weight} of the marked set. 
 
\begin{defi}
For $\mathcal{F}\subseteq \mathcal{B}\times \mathcal{P}(\mathbb N )\times \Rbb^+$,
\begin{align*}
X(\mathcal{F})&=\bigcup_{(X,I,w)\in \mathcal{F}} X,\text{ and }\\
w(\mathcal{F})&=\sum_{(X,I,w)\in \mathcal{F}} w.
\end{align*}
\end{defi}
We use the classes of marked weighted sets to define outer submeasures on $\B$. 
\begin{defi}
For $\mathcal{E}\subseteq \B\times \mathcal{P}(\mathbb N)\times \Rbb^+$, define 
$\phi_\mathcal{E}:\mathcal{B}\to \Rbb^+$ by setting
\[
\phi_{\mathcal{E}}(X)=\inf\{w(\mathcal{F}):\mathcal{F}\subseteq\mathcal{E} \text{ is finite and } X\subseteq X(\mathcal{F})\}
\]
By convention, we let $\phi_{\mathcal E}(\emptyset)=0$ and $\phi_{\mathcal E}(X)=+\infty$, for all
$X\in \B$ that is not covered by $X(\mathcal F)$, for any finite $\mathcal F \subseteq \mathcal E$. 

\end{defi}

The following notation will be frequently used. In particular, if $A\in \mathcal A_m$,
for some $m$, and $C\subseteq A$, we will use it to define the relative submeasure of $C$
inside $A$. 

\begin{defi}
Let $A\in \A_m$ and let $u\in \mathcal P$ be such that $A=N_u$.
Define $\pi_A:T\to A$ by
\[
\pi_A(z)(i) = \begin{cases} u(i),  &\mbox{if } i <m,  \\
                          z(i), & \mbox{otherwise. } \end{cases}
\]
\end{defi}

We now recall the definition of a thin set relative to a given submeasure. 
This notion was initially introduced by Farah in \cite{Fa:Examples} who used it to construct
examples of $\epsilon$-exhaustive pathological submeasures. It also plays a key role
in Talagrand's construction.

\begin{defi} Suppose $m<n$, $\phi :\mathcal{B}\rightarrow [0,+\infty]$ is a function and $X\in\mathcal{B}$, 
then $X$ is $(m,n,\phi )$-{\em thin} if for all 
$A\in \mathcal A_m$  there is $H\in \mathcal B_n$ such that $H\subseteq A\setminus X$ and $\phi(\pi^{-1}_A(H))> 1$.
%	\[
%\forall A\in \mathcal{A}_m \exists C\in \B_n\;  C\seq A,\; C\cap X=\emptyset,\quad  
%\phi (\pi_A^{-1} (C))\ge 1.
%\]
For $I\subseteq \mathbb N$, $X$ is $(I,\phi)$-{\em thin} $I$, if it is $(m,n,\phi )$-thin, for
all $m,n\in I$ with $m<n$.
\end{defi}

Notice that $\pi_A^{-1}(H)$ is obtained by simply copying $H$ inside all other
atoms in $\mathcal A_m$. In all our cases $\phi$ will be a submeasure and we think
of $\phi(\pi^{-1}_A(H))$ as the submeasure of $H$ relative to $A$. 
Therefore, saying that $X$ is $(m,n,\phi )$-thin simply means that ${\rm int}_n(A\setminus X)$
is large, i.e. has submeasure bigger than  $1$ relative to $A$, for every $A\in \mathcal A_m$.

Before we present the details, let us describe the main ideas of the construction. 
We shall fix a sequence of positive reals $(a_k)_k$ such that $\sum_k a_k$ converges
and a sequence of integers $(M_k)_k$ quickly increasing to $+\infty$. 
We identify two properties of a submeasure $\phi$ which together imply 
that $\phi$ is exhaustive. 

\begin{defi}\label{thinness-property} Let $k$ be an integer. We say that a submeasure $\phi$ on $\mathcal B$ has
the $k$-{\em thinness property} if $\phi(X)\leq 2^{-k}$, 
for every $X\in \mathcal B$ which is $(I,\phi)$-thin for some set $I$ with $\| I \|=M_k$. 
%We say that $\phi$ has the {\em thinness property}  if there is an integer $k_0$ such that $\phi$
 %has the $k$-thinness property, for all  $k\geq k_0$. 
\end{defi}

Note that this notion depends on our chosen norm  $\| \cdot \|$. If the norm is not clear
from the context we will explicitly specify it. The next definition is more technical, it expresses
a form of regularity of a submeasure $\phi$. It is motivated by the notion of a {\em potentially exhaustive}
submeasure introduced in \cite{Fa:Examples}.

\begin{defi}\label{covering-property} Let $\phi$ be  a submeasure on $\mathcal B$. 
Suppose $m$ is an integer and $E\in \mathcal B$ does not depend on coordinates $<m$
and $\phi(E)<2$. Let $n(E)$ be the least integer $n$ such that $E\in \mathcal B_n$. 
A sequence $\{C^m_r(E): m<r\leq n(E)\}$ is an $m$-{\em covering sequence} for $E$ if: 
\begin{enumerate}
\item[$(1)$] $C_r^m(E) \in \mathcal B_r$, for every $r$ such that $m < r\leq n(E)$, 
\item[$(2)$] ${\rm int}_j(E)\subseteq \bigcup \{ C_r^m(E): m< r \leq j\}$, for every $j$ such that $m < j\leq n(E)$,
\item[$(3)$] $\sum_{m<r\leq n(E)} \phi(C_r^m(E))\leq 4$. 
\end{enumerate}
\noindent We say that $\phi$ has the $m$-{\em covering property} if every such $E$ has an $m$-covering sequence. 
Finally, we say that $\phi$ has the {\em covering property} if it has the $m$-covering property, for every $m$.
\end{defi}

It will be fairly easy to show that if $\phi$ is a submeasure satisfying the covering and thinness
properties and such that $\phi(T)\geq 8$ then $\phi$ is exhaustive. 
In order to construct such $\phi$ the natural idea is to define a sequence
$(\mathcal F_k)_k$ of subsets of $\mathcal B \times {\mathcal P}(\mathbb N) \times \mathbb R^+$
as follows. Start with $\mathcal F_0=\emptyset$. Given $\mathcal F_k$ let $\phi_k= \phi_{\mathcal F_k}$.
Construct $\mathcal F_{k+1}$ by adding to $\mathcal F_k$ all triples $(X,I,w)$ such that 
$X$ is $(I,\phi_k)$-thin, $\| I \| \leq M_k$ and 
\[
w\geq 2^{-k}\Big(\frac {M_k}{\|I\|}\Big)^{a_k}.
\]
The reason for this last requirement is to ensure the covering property. Namely, suppose 
 $(X,I,w)\in  \mathcal F_{k+1}$. In some situations we will need to replace  $(X,I,w)$ 
 by another triple $(X',I',w')\in \mathcal F_{k+1}$, where
 $X'$ is  a superset of $X$ and depends only on coordinates in some interval $[m,n)$,
 $I'=I\cap [m,n]$,  and $w'$ is not too big relative to $w$. 
If $\|I'\|$ is not too much smaller than $\|I\|$, the fact that we have $a_k$ in the exponent
will allow us to choose $w'$ which is very close to $w$. 
This construction would ensure that $\phi_{k+1}$ satisfies the $k$-thinness condition
and, since the sequence $(\phi_k)_k$ is decreasing, this condition will remain to hold
for the later $\phi_l$. 
The problem with this scenario is that, in order to obtain an exhaustive submeasure,
we would have to continue this process for all $k$, but as explained in \cite{Fa:Examples},
the limit submeasure $\lim_k \phi_k$ collapses to $0$. 
 
 The main new idea \cite{Talagrand} is to reverse this process. Namely, for each 
 $p$ we define families $\mathcal C_{k,p}$, for $k\leq p$, by backwards induction. 
 We can start with $\mathcal C_{p,p}=\emptyset$. 
 %In fact, Talagrand starts with $\mathcal C_{p,p}=\mathcal D$,
 %for some previously constructed family $\mathcal D$, but this is not necessary. 
 Given $\mathcal C_{k+1,p}$ we let $\nu_{k+1,p}=\phi_{\mathcal C_{k+1,p}}$
 and we construct $\mathcal C_{k,p}$ by adding to  $\mathcal C_{k+1,p}$ all triples
 $(X,I,w)$ satisfying the thinness and the weight conditions relative to $\nu_{k+1,p}$.
 We have that the $\nu_{k,p}$ decrease as $k$ gets smaller, but we are able to
 guarantee that $\nu_{0,p}(T)\geq 8$. In this way we will have that $\nu_{k,p}$ satisfies
 the $l$-thinness property, for all $k \leq l <p$. Then we pick a non principal ultrafilter $\mathcal U$
 on $\mathbb N$ and let $\nu_k =\lim_{p\rightarrow \mathcal U} \nu_{k,p}$, for each $k$.
 The covering property and the $l$-thinness property are preserved by taking
 the $\mathcal U$-limit of submeasures, so the resulting $\nu_k$ will all be exhaustive.

 We now turn to the details of the construction. We define the sequences $(a_k)_k$ and
 $(M_k)_k$ as follows:
\begin{eqnarray*}
a_k&=&\frac{1}{(k+5)^3}\\
M_k &=& 2^{2k+12}\cdot 2^{(k+4)(k+5)^3}.\
%k+12}\cdot 2^{\frac{k+4}{a(k)}}\
\end{eqnarray*}

\begin{defi}\label{definition-submeasures}
Fix an integer $p\in\mathbb N$. We define families $\mathcal C_{k,p}$ for $k\leq p$, by downwards induction on $k$. 
Once we have $\mathcal C_{k,p}$ we let $\nu_{k,p}=\phi_{\mathcal C_{k,p}}$.
We start by letting $\mathcal C_{p,p}=\emptyset$. 
Suppose $k<p$ and $\mathcal{ C}_{k+1,p}$ has been defined. We let:
\begin{eqnarray*}
 \mathcal{ E}_{kp}
&=&\{(E,I,w):E\in\mathcal{B} ,\,I\subseteq\mathbb N,\,
\|I\|\le M_k,
w\ge 2^{-k}\Big(\frac {M_k}{\|I\|}\Big)^{a_k},\\
&\, &
E\text{ is }(I,\nu_{k+1,p})\text{-thin}\},\\
\mathcal{ C} _{k,p} &=& \mathcal{ C} _{k+1,p} \cup \mathcal {E}_{k,p}.
\end{eqnarray*}
We also define a sequence $(c_k)_k$ by setting $c_0=8$ and $c_{k+1}=4^{a_k}c_k$, for all $k$.
\end{defi}

Let us compare our construction with the one from \cite{Talagrand}. 
First, Talagrand starts by setting $\mathcal C_{p,p}=\mathcal D$, for some suitable family $\mathcal D$.
This was done in order to ensure that all the submeasure are pathological, but it is not really necessary
since we will have an explicit reason why our submeasures are not uniformly exhaustive. 
The main difference is that in the definition of the  $\mathcal E_{k,p}$, instead of the cardinality of $I$ 
we use $\| I\|$, where $\| \cdot \|$ is our given admissible norm. 
Of course,  the notion of an admissible norm was tailor-made so that analogs of the key arguments
from \cite{Talagrand} would go through. The upshot is that by varying our norm $\| \cdot \|$ we obtain uncountably many
essentially different examples of exhaustive non uniformly exhaustive submeasures. 
For the remainder of this section we prove some technical lemmas and show
that $\nu_{k,p}(N_u)\geq 8$, for all $k\leq p$ and all $u\in \mathcal P$ such that $\| \dom(u)\|\leq 1$. 

%Let us notice that by increasing $k$ the corresponding submeasures, being defined over larger families, 
%also decrease, %making the condition of thinness stricter.\\
%Next lemma is a standard tool in reducing set $I$ and controlling the change in $w$.

\begin{lemma}\label{duga}
Let $k,p$ and $m$ be integers with $k\leq p$. Suppose $(X,I,w)\in \mathcal{C}_{k,p}$ and $A\in \mathcal{A}_m$. 
Suppose that $n>m$ and $I'=I\cap [m,n)$ is non-empty. Set $X'=[\pi_A^{-1}(X\cap A)]_n$. 
Then $(X',I',w')\in \mathcal{C}_{k,p}$, where $w'=w\cdot (\frac{\|I\|}{\|I'\|})^{a_k}$.
\end{lemma}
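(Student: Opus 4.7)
The plan is to use the decomposition $\mathcal{C}_{k,p} = \bigcup_{k \le j < p}\mathcal{E}_{j,p}$: fix $j$ with $k\le j<p$ such that $(X,I,w)\in\mathcal{E}_{j,p}$, and show that $(X',I',w')$ lies in the same $\mathcal{E}_{j,p}$. This amounts to checking three things: the size bound $\|I'\|\le M_j$, the weight bound $w'\ge 2^{-j}(M_j/\|I'\|)^{a_j}$, and the $(I',\nu_{j+1,p})$-thinness of $X'$.

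The size bound is immediate from $I'\subseteq I$ and monotonicity of the norm. For the weight bound, starting from $w\ge 2^{-j}(M_j/\|I\|)^{a_j}$ and multiplying by $(\|I\|/\|I'\|)^{a_k}$, the required inequality reduces to
\[
(\|I\|/\|I'\|)^{a_k}\ge (\|I\|/\|I'\|)^{a_j}.
\]
Both sides have base $\ge 1$, and the exponents satisfy $a_k\ge a_j$ because $a_l=1/(l+5)^3$ decreases in $l$ and $k\le j$. This is precisely why the scaling exponent in the lemma is taken to be $a_k$ rather than $a_j$: it makes the argument work uniformly across all $j\ge k$.

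The main work is the thinness of $X'$. Given $m_1<n_1$ in $I'$ and an atom $A_1\in\mathcal A_{m_1}$, note that $m\le m_1<n_1<n$, so $m_1,n_1\in I$. Inside $A$ there is a unique sub-atom $A''\in\mathcal A_{m_1}$, namely $A''=N_{v_1}$ where $v_1$ equals the generator of $A$ on $[0,m)$ and the generator of $A_1$ on $[m,m_1)$. Applying the $(I,\nu_{j+1,p})$-thinness of $X$ to $m_1<n_1$ and the atom $A''$ produces $H\in \mathcal B_{n_1}$ with $H\subseteq A''\setminus X$ and $\nu_{j+1,p}(\pi_{A''}^{-1}(H))>1$. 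I define $H_1$ as the copy of $H$ inside $A_1$, sharing the same shape $\tilde H$ on coordinates $[m_1,n_1)$ but with the generator of $A_1$ on $[0,m_1)$; an unwinding of definitions yields $\pi_{A_1}^{-1}(H_1)=\pi_{A''}^{-1}(H)$, so the submeasure bound transfers for free. The main obstacle is to verify $H_1\cap X'=\emptyset$: if $z\in H_1\cap X'$, then there exists $z'\in\pi_A^{-1}(X\cap A)$ with $z'\restriction n=z\restriction n$, and $y=\pi_A(z')\in X\cap A$ agrees with the generator of $A$ on $[0,m)$ and with $z$ on $[m,n)$, hence with $v_1$ on $[0,m_1)$, so $y\in A''$; together with $y\restriction[m_1,n_1)=z\restriction[m_1,n_1)\in\tilde H$, this forces $y\in H$, contradicting $H\cap X=\emptyset$.
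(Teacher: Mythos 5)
Your proof is correct and follows essentially the same route as the paper: fix the level $j$ with $(X,I,w)\in\mathcal E_{j,p}$, check the weight bound via the monotonicity of $(a_l)_l$, and transfer thinness witnesses from the sub-atom of $A$ to an arbitrary atom $A_1\in\mathcal A_{m_1}$. The only cosmetic difference is that you treat all atoms uniformly through the atom $A''$ and verify disjointness from $X'=[\pi_A^{-1}(X\cap A)]_n$ directly, whereas the paper first reduces to $X\subseteq A$ and splits into the cases $A_1\subseteq A$ and $A_1\cap A=\emptyset$; the content is the same.
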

\begin{proof} 
By the definition of $\mathcal{C}_{kp}$, there is some $r$ with $k\leq r <p$ such that $(X,I,w)\in \E_{rp}$. 
Let us fix such $r$ and let us show that $X'$ is $(I',\nu_{r+1,p})$-thin. 
Since we only used $X\cap A$ in the definition of $X'$, we may assume that $X\seq A$. 
 Let $i,j\in I'$ be such that $i<j$. We need to show that for every $A_1\in \A_i$ there is $H\in \B_j$
 such that $H\subseteq A_1 \setminus X'$ and $\nu_{r+1,p}(\pi_{A_1}^{-1}(H)) > 1$. 
 Now, if $A_1\subseteq A$  this follows from the fact that $X$ is $(i,j,\nu_{r+1,p})$-thin. 
 If $A_1\cap A = \emptyset$ let $A_2=\pi_A(A_1)$. Then, as before, we can find  $H\subseteq A_2 \setminus X$ 
 such that $H\in \B_j$ and $\nu_{r+1,p}(\pi_{A_2}^{-1}(H)) >1$. 
 Let $H'=\pi_{A_2}^{-1}(H)\cap A_1\in \B_j$. Since $\pi_{A_1}^{-1}(H')= \pi_{A_2}^{-1}(H)$ we also have  
 $\nu_{r+1,p}(\pi_{A_1}^{-1}(H')) > 1$. Moreover, we have $H'\cap \pi_{A}^{-1}(X)=\emptyset$. 
 Indeed, if $x\in H'\cap \pi_A^{-1}(X)$ we would have that   $\pi_{A_2}(x)\in X\cap H=\emptyset$.
 Since $H'\in \B_j$ we also have $H'\in \B_n$. Therefore, $H'\cap [\pi_{A}^{-1}(X)]_n=\emptyset$ i.e. $H'\cap X'=\emptyset$. 
Finally, let us check the weight condition. Since,  by definition, $w\ge 2^{-r}\bigr(\frac {M_r}{\|I\|}\bigr)^{a_r}$ and $k\le r$, we get 
\begin{align*}
w'&=w\cdot \Bigr(\frac{\|I\|}{\|I'\|}\Bigr)^{a_k}\\
&\ge w\cdot \Bigr(\frac{\|I\|}{\|I'\|}\Bigr)^{a_r}\\
& \ge 2^{-r}\Bigr(\frac {M_r}{\|I'\|}\Bigr)^{a_r}.
\end{align*}
This proves that $(X',I',w')\in \E_{rp}\seq \mathcal{C}_{rp}\seq \mathcal{C}_{kp}$, as desired.
\end{proof}

For the purpose of the following lemma we shall extend our previous notation and 
if $\mathcal D$ is a subset of $[\mathbb N]^{<\omega}\times \mathbb R^+$ we shall write
\[
w(\mathcal{D})=\sum_{(I,w)\in \mathcal{D}} w.
\]

\begin{lemma}\label{vazna}
Suppose $t\geq 5$ and $J_0< \dots <J_{s-1}$ are finite sets with $\Vert J_i\Vert \geq t$, for all $i<s$. 
Suppose $\mathcal F$ is a finite subset of $[\mathbb N]^{<\omega}\times \mathbb R^+$ and 
let $a=w(\mathcal F)$. Then we can find integers $m_i,n_i\in J_i$, with $m_i<n_i$ for all $i<s$, such that:
\begin{enumerate}
	\item $\Vert J_i \cap [m_i,n_i)\Vert \geq 3$, for all $i<s$,
	\item if we let $W=\bigcup_{i<s} [m_i,n_i)$ and 
	$\mathcal D=\{ (I,w)\in \mathcal F: \Vert I\setminus W \Vert < \frac{1}{2}\Vert I\Vert \}$
	then 
	\[
	w(\mathcal D)\leq \frac{3a}{t-4}.
	\]
	
\end{enumerate}

\end{lemma}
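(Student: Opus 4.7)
Plan: The proof combines a greedy partition of each $J_i$ with an averaging argument over a random choice of intervals.

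\emph{Step 1 (Greedy partition).} For each $i<s$, I would greedily partition $J_i$ from the left into consecutive pieces $B_i^r := J_i \cap [m_i^r, n_i^r)$ ($r=1,\ldots,q_i$) of norm exactly $3$: starting from $m_i^1 = \min J_i$, let $n_i^r$ be the least element of $J_i$ greater than $m_i^r$ with $\|J_i \cap [m_i^r, n_i^r)\| \geq 3$, and iterate with $m_i^{r+1}=n_i^r$ until no further such piece exists. Minimality of $n_i^r$ together with $\|\{n\}\|=1$ and sub-additivity force $\|B_i^r\|=3$; a second use of sub-additivity gives $\|J_i\| \leq 3q_i + 2$, whence $q_i \geq (t-2)/3$. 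Each pair $(m_i^r,n_i^r)$ is a valid candidate for $(m_i,n_i)$ satisfying conclusion~(1).

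\emph{Step 2 (Averaging setup).} I would consider the uniform distribution on $\prod_{i<s}\{1,\ldots,q_i\}$, associating to each $(r_i)_{i<s}$ the set $W=\bigcup_i [m_i^{r_i},n_i^{r_i})$ and the corresponding set $\mathcal D$ as in the statement. It then suffices to show
\[
\mathbb E[w(\mathcal D)] \;=\; \sum_{(I,w)\in\mathcal F} w \cdot \Pr[(I,w)\in\mathcal D] \;\leq\; \frac{3a}{t-4},
\]
since some deterministic choice of $(r_i)$ must then realize this bound.

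\emph{Step 3 (Per-pair probability bound).} For each $(I,w)\in\mathcal F$ I would aim for the uniform bound $\Pr[(I,w)\in\mathcal D] \leq 3/(t-4)$. The defining inequality $\|I\setminus W\| < \|I\|/2$, combined with sub-additivity applied to $I = (I\setminus W)\cup (I\cap W)$ and to $I\cap W = \bigcup_i (I\cap [m_i^{r_i},n_i^{r_i}))$ (pairwise disjoint as $i$ varies), gives
\[
\sum_{i<s} \| I \cap [m_i^{r_i}, n_i^{r_i}) \| \;>\; \|I\|/2.
\]
I would then exploit the spreading property of $\|\cdot\|$: if inside a fixed $J_i$ too many of the $q_i$ candidate pieces had large intersection-norm with $I$, then by compressing these disjoint subsets of $I$ via $\leq_s$ one could produce a sub-configuration of $I$ whose norm exceeds $\|I\|$, a contradiction. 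This counting, combined with a Markov-type averaging step, yields the stated bound; the ``$3$'' reflects the per-piece norm and the ``$t-4$'' comes from $q_i\geq (t-2)/3$ after a small boundary correction.

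\emph{Main obstacle.} The delicate step is Step~3. Sub-additivity alone does not give a useful bound on $\mathbb E[\|I\cap W\|]$, since many disjoint subsets of $I$ can have total norm much larger than $\|I\|$; this breaks a naive Markov argument. The spreading property — exactly why admissible norms are required to be spreading, as already exploited in Lemma~\ref{zapping} — is the tool that converts disjoint heavy intersections into a bound controlled by $\|I\|$ itself, and it is what produces the quantitative factor $3/(t-4)$.
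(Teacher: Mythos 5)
Your Steps 1--2 are fine in outline (blocks of norm exactly $3$, then averaging), but the proof lives entirely in Step 3, and that step is both missing and, as sketched, based on an argument that cannot work. To bound $\Pr[(I,w)\in\mathcal D]$ you need a \emph{lower}-bound mechanism: ``many disjoint pieces of $I$ each of large norm forces $\|I\|$ to be large.'' Neither axiom supplies this. Monotonicity says every subset of $I$ has norm $\leq \|I\|$, subadditivity only gives upper bounds for unions (indeed $\sum_r \|I\cap[m_i^r,n_i^r)\|$ can vastly exceed $\|I\|$, as you note yourself), and the spreading property goes the wrong way: compressing a set to the left via $\leq_s$ can only \emph{decrease} its norm, so no ``sub-configuration of $I$ whose norm exceeds $\|I\|$'' can ever be produced -- that would contradict monotonicity. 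Note also that $\|I\cap[m_i^r,n_i^r)\|$ has nothing to do with the value $3=\|J_i\cap[m_i^r,n_i^r)\|$: the interval contains all the gaps between elements of $J_i$, and $I$ is arbitrary, so these intersection norms are not ``per-piece $3$''. (A minor separate slip: the greedy leftover has norm $\leq 3$, not $\leq 2$, so you only get $q_i\geq (t-3)/3$; this is harmless.) So as written there is a genuine gap at the central estimate, and the tool you invoke to close it is not the right one.

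The fact that saves the lemma is much more elementary and is exactly what the paper uses: if two choices of windows are disjoint, say $W\cap W'=\emptyset$, then $(I\setminus W)\cup(I\setminus W')=I$, so by subadditivity at most one of the two events $\|I\setminus W\|<\tfrac12\|I\|$, $\|I\setminus W'\|<\tfrac12\|I\|$ can hold. The paper therefore \emph{synchronizes} the choice: it picks in each $J_i$ an increasing sequence of $c=\lfloor\frac{t-1}{3}\rfloor$ consecutive blocks of norm $3$ and, for each level $l<c$, forms $W_l=\bigcup_{i<s}[m_{i,l},m_{i,l+1})$ using the \emph{same} index $l$ in every $J_i$. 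The $W_l$ are pairwise disjoint, so the families $\mathcal D_l$ are pairwise disjoint, whence $\sum_{l<c}w(\mathcal D_l)\leq a$ and some $l$ has $w(\mathcal D_l)\leq a/c\leq 3a/(t-4)$; spreading is never used in this lemma. Your independent-choice scheme could in principle be completed -- the same disjointness observation shows that any two ``bad'' choice vectors must agree in some coordinate, and an agreeing family in $\{1,\dots,c\}^s$ has density at most $1/c$ (at most one vector per coset of the diagonal) -- but that is strictly more work, its key ingredient is the very subadditivity/disjointness remark above rather than spreading, and the paper's synchronized choice is precisely the extremal ``diagonal'' configuration, obtained for free.
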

\begin{proof}
Let $c=\lfloor \frac{t-1}{3}\rfloor$.  For each $i<s$, we pick an increasing sequence $\{ m_{i,l}: l \leq c\}$ of elements
of $J_i$ such that $\Vert J_i \cap [m_{i,l},m_{i,l+1})\Vert =3$, for all $l <c$. 
Given $l <c$, let $W_l= \bigcup_{i<s} [m_{i,l},m_{i,l+1})$ and let 
\[
\mathcal D_l = \{ (I,w)\in \mathcal F : \Vert I \setminus W_l \Vert < \frac{\Vert I \Vert}{2} \}.
\]
Since the $W_l$ are pairwise disjoint and $\Vert \cdot \Vert$ is subadditive, it follows that 
the $\mathcal D_l$ are pairwise disjoint.  Therefore, we get that
\[
w(\mathcal{D}_0)+\dots  +w(\mathcal{D}_{c-1})\le  w(\FF)=a.
\]
By using the fact that the minimum of a finite sequence is less than or equal to its average, we conclude that 
there exists $l< c$ such that:
\[
w(\mathcal D_l)\le \frac{a}{c}\le \frac{a}{(t-1)/3-1}=\frac{3a}{t-4}.
\]
Therefore, we can let $m_i= m_{i,l}$ and $n_i=n_{i,l}$, for all $i<s$, and this satisfies
the conclusion of the lemma. 
\end{proof}

In the next proposition we adapt the argument of Theorem 5.1 from \cite{Talagrand}. 

\begin{prop}\label{lower bound} 
Suppose $k$ and $p$ are integers with $k\leq p$. Then $\nu _{k,p}(N_u)\ge c_k$, for every $u\in \mathcal P$
with $\| \dom (u) \|\leq 1$.
In particular, $\nu_{k,p}(T)\geq c_k$. 
\end{prop}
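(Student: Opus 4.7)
The plan is to argue by downward induction on $k$ from $p$ to $0$. The base case $k=p$ is immediate since $\mathcal C_{p,p}=\emptyset$, so $\nu_{p,p}$ is $+\infty$ on every non-empty clopen set.

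For the inductive step, fix $u$ with $\|\dom(u)\|\leq 1$ and assume the conclusion at level $k+1$. Suppose toward contradiction that there is a finite $\mathcal F\subseteq\mathcal C_{k,p}$ covering $N_u$ with $a:=w(\mathcal F)<c_k$. Split $\mathcal F=\mathcal F_1\cup\mathcal F_2$ with $\mathcal F_1\subseteq\mathcal E_{k,p}$ (the triples introduced at stage $k$) and $\mathcal F_2\subseteq\mathcal C_{k+1,p}$. If $\mathcal F_1=\emptyset$ then $\mathcal F\subseteq\mathcal C_{k+1,p}$ has weight $<c_k<c_{k+1}$, directly contradicting the induction hypothesis. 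So enumerate $\mathcal F_1=\{(X_i,I_i,w_i):i<q\}\neq\emptyset$; each $X_i$ is $(I_i,\nu_{k+1,p})$-thin and $w_i\geq 2^{-k}(M_k/\|I_i\|)^{a_k}$. The goal is to construct $u'$ with $\|\dom(u')\|\leq 1$ and a cover $\mathcal G\subseteq\mathcal C_{k+1,p}$ of $N_{u'}$ with $w(\mathcal G)<c_{k+1}=4^{a_k}c_k$, contradicting the induction hypothesis at level $k+1$.

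To produce $u'$ and $\mathcal G$, choose a large parameter $t$ (a small power of $M_k$) so that $3a/(t-4)$ is negligible relative to $c_{k+1}-c_k$. Apply Lemma~\ref{Roberts} to the triples with $\|I_i\|\geq st$ to extract increasing blocks $J_0<\cdots<J_{s-1}$ of common norm $t$, then apply Lemma~\ref{vazna} to locate sub-intervals $[m_i,n_i)\subseteq J_i$ of norm at least $3$ such that the triples whose index sets are essentially disjoint from $W=\bigcup_i[m_i,n_i)$ contribute total weight at most $3a/(t-4)$. Select a coordinate $m$ inside one of the chosen blocks using Lemma~\ref{zapping} to guarantee that $\dom(u'):=\dom(u)\cup\{m\}$ still satisfies $\|\dom(u')\|\leq 1$ (this is where the spreading property of $\|\cdot\|$ is essential), and pick a value $v<2^m$ so that, by thinness of each surviving $X_i$ applied at the pair $m<n_i\in I_i$, there are witnesses $H_i\in\mathcal B_{n_i}$ with $H_i\subseteq A\setminus X_i$ and $\nu_{k+1,p}(\pi_A^{-1}(H_i))>1$ for the atom $A=N_{u'\restriction m}$. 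Invoke Lemma~\ref{duga} to localize each triple of $\mathcal F_2$ into $A$ and, likewise, to replace each surviving thin triple $(X_i,I_i,w_i)$ by a triple in $\mathcal C_{k+1,p}$ derived from $X'_i=[\pi_A^{-1}(X_i\cap A)]_{n_i}$ and $I'_i=I_i\cap[m_i,n_i)$. Each such localization multiplies the weight by $(\|I\|/\|I'\|)^{a_k}\leq 4^{a_k}$, since the chosen intervals retain at least a quarter of each $\|I_i\|$'s norm; adding the negligible discarded weight, we obtain $w(\mathcal G)<4^{a_k}a<c_{k+1}$.

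The main obstacle is simultaneous coordination: one has to pin down a single coordinate $m$ and value $v$ that are compatible with $\dom(u)$ under the tight norm constraint $\|\dom(u')\|\leq 1$, while simultaneously serving as thinness witnesses for every surviving $X_i$ and while keeping the Lemma~\ref{duga} weight inflation below $4^{a_k}$. The exponent $a_k$ in the definition of $\mathcal E_{k,p}$ was calibrated precisely so that this inflation exactly matches the gap $c_{k+1}/c_k=4^{a_k}$, leaving only the small $3a/(t-4)$ error to be absorbed by taking $t$ large.
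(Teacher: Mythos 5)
Your overall frame (downward induction, splitting $\mathcal F$ into $\mathcal F_1\subseteq\mathcal E_{k,p}$ and $\mathcal F_2\subseteq\mathcal C_{k+1,p}$, Roberts' selection, Lemma~\ref{vazna}, Lemma~\ref{zapping}, and the $4^{a_k}$ inflation budget matching $c_{k+1}/c_k$) is the right toolbox, but the pivot of your argument has a genuine gap. You propose to build a new condition $u'=u\cup\{(m,v)\}$ with $\|\dom(u')\|\leq 1$ and then cheaply cover $N_{u'}$ at level $k+1$. First, nothing guarantees $\|\dom(u)\cup\{m\}\|\leq 1$: if $\|\dom(u)\|=1$, adjoining any new coordinate typically raises the norm to $2$, and Lemma~\ref{zapping} gives nothing of the sort --- its role is only to find, inside each block $J_l$, an interval $[m_l,n_l)$ \emph{disjoint} from $\dom(u)$. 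Second, even granting such an $m$, fixing a single coordinate value cannot make $N_{u'}$ avoid the thin sets $X_l$ of $\mathcal F_1$: thinness only says that inside each atom of rank $m_l$ there is a set in $\mathcal B_{n_l}$ of relative submeasure $>1$ missing $X_l$, so avoidance requires committing to values on the whole interval $[m_l,n_l)$, for all $l<s$ simultaneously --- and those coordinates cannot be packed into a condition of norm $\leq 1$. Third, your use of Lemma~\ref{duga} to ``replace each surviving thin triple $(X_l,I_l,w_l)$ by a triple in $\mathcal C_{k+1,p}$'' is not what that lemma does: localization keeps a triple in the same class $\mathcal E_{r,p}$ it came from, so the $\mathcal F_1$ triples stay at level $k$ and cannot contribute to a cover witnessing smallness of $\nu_{k+1,p}$. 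Consequently the contradiction ``$w(\mathcal G)<c_{k+1}$ for a cover of $N_{u'}$'' cannot be assembled along your lines.

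The paper resolves exactly this coordination problem differently: instead of strengthening $u$, it defines a map $\Gamma:T\to T$ which is the identity off $W=\bigcup_l[m_l,n_l)$ and, on each interval $[m_l,n_l)$ (chosen by Lemma~\ref{zapping} to be disjoint from $\dom(u)$), routes every atom of $\mathcal A_{m_l}$ into an atom of $\mathcal A_{n_l}$ avoiding $X_l\cup X(\mathcal F_{3,l})$; such an atom exists because $2w(\mathcal F_{3,l})\leq\tfrac12$ while the thinness of $X_l$ leaves relative mass $>1$. Thus $\Gamma(T)$ misses $X(\mathcal F_1)\cup X(\mathcal F_3)$ outright, and only $\mathcal F_4\subseteq\mathcal C_{k+1,p}$ is transferred to level $k+1$: for $(X,I,w)\in\mathcal F_4$ the preimage $\Gamma^{-1}[X]$ is still $(I',\nu_{r+1,p})$-thin for $I'=I\setminus(W\cup\{i_l:l<s\})$ with $\|I'\|\geq\|I\|/4$, so it lies in $\mathcal C_{k+1,p}$ with weight at most $4^{a_k}w$, giving $\nu_{k+1,p}(\Gamma^{-1}[X(\mathcal F_4)])<4^{a_k}c_k=c_{k+1}$. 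The induction hypothesis is then applied to the \emph{same} $u$ (legitimate since $W\cap\dom(u)=\emptyset$, so $\Gamma$ maps $N_u$ into $N_u$), yielding $z\in N_u$ with $\Gamma(z)\notin X(\mathcal F)$. Your write-up is missing this device (or an equivalent one), and without it the step from ``thinness witnesses exist atomwise'' to ``a single norm-$1$ condition avoids everything'' does not go through.
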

\begin{proof} Let us fix $p$ and prove the statement by backwards induction on $k$. 
If $k=p$ the statement is obvious since $\nu_{p,p}(X)=+\infty$, for every non empty $X\in \mathcal B$. 
Thus, let us assume $k<p$, the inequality holds for $k+1$, and let us check that it holds for $k$. 
Fix $u\in \mathcal P$ with $\| \dom (u) \| =1$ and a finite $\mathcal F \subseteq \mathcal C_{k,p}$ with $w(\mathcal F) <c_k$. 
We have to show that $N_u\nsubseteq X(\mathcal F)$. 

To begin let us fix $\mathcal{F}_1\seq \mathcal{E}_{k,p}$, $\mathcal{F}_2\seq\mathcal{C}_{k+1,p}$ 
such that $\mathcal{F}=\mathcal{F}_1\cup \mathcal{F}_2$.
Let $s=|\mathcal{F}_1|$. Since for $(X,I,w)\in \mathcal{F}_1$ we have $w\ge 2^{-k}$, we get $s\le 2^kc_k\le 2^{k+4}$. 
On the other hand, for every such $(X,I,w)$ we have
\[
2^{-k}\Big(\frac {M_k}{\|I\|}\Big)^{a_k} \!  \! \le w\le c_k\le 2^4,
\]
so an easy calculation gives us $\|I\| \ge 2^{k+8}s$.
Using Lemma \ref{Roberts}, we can enumerate $\mathcal{F}_1$ as $\{ (X_l,I_l,w_l) : l <s \}$, 
and find sets $J_0,\dots J_{s-1}$, such that  $J_l\subseteq I_l$ and $\|J_l\|\ge 2^{k+8}$, for every $l<s$,
and moreover  $J_0<\dots < J_{s-1}$.
Applying Lemma \ref{vazna}, where $a=w(\mathcal{F}_2)\le 2^4$, we can find  $m'_l,n'_l\in J_l$
with $m'_l <n'_l$, for $l < s$,   so that $\|J_l\cap [m'_l,n'_l)\|=3$ and,  if 
we let,
\[W'=\bigcup_{l<s}\; [m'_l,n'_l) \mbox{ and } 
\mathcal F'_3= \{ (X,I,w) \in \mathcal F_2: \| I \setminus W'\| < \frac{1}{2}\| I\| \},
\]
then 
\[
w(\mathcal{F}'_3) \le \frac{3\cdot 2^4}{2^{k+8}-4}\le \frac{12}{2^{k+6}-1}\le \frac{1}{4}.
\]
Since $\| \dom (u)\| =1$, by Lemma \ref{zapping} we can find consecutive elements
$m_l,n_l\in J_l \cap [m'_l,n'_l)$ such that $\dom (u) \cap [m_l,n_l)=\emptyset$, for every $l<s$.
Now, let 

\[
W=\bigcup_{i < s} \;  [m_l,n_l), \hspace{5mm}
\mathcal{F}_3=\{(X,I,w)\in \mathcal{F}_2:\|I\setminus W\| < \frac{1}{2}\|I\|\}, \hspace{5mm}
 \mathcal{F}_4= \mathcal{F}_2\setminus \mathcal{F}_3.
 \]
 Since $\mathcal F_3\subseteq \mathcal F'_3$ we have that $w(\mathcal F_3)\leq w(\mathcal F'_3)\leq 1/4$. 
Since $I=(I\cap W)\cup (I\setminus W)$, for $(X,I,w)\in \mathcal{F}_3$, we also get 
$\|I\cap W\| \geq  \frac{1}{2}\|I\|$.
For $l <s$ let

\[
\mathcal{F}_{3,l}=\{(X,I,w)\in \mathcal{F}_3:\|I\cap [m_l,n_l)\| \geq 2^{-k-5}\|I\|\}.
\]
Since $s\le 2^{k+4}$ and $\|I\cap W\| \ge \frac{1}{2}\|I\|$, 
we get $\mathcal{F}_3=\bigcup\limits_{l < s}\mathcal{F}_{3,l}$.

\begin{cla}\label{atoms} For every $l< s$ and  $A\in \mathcal A_{m_l}$,
there is $A'\in \mathcal A_{n_l}$ with  $A'\subseteq A\setminus (X_l\cup X(\FF_{3,l}))$.
% within every atom of $\mathcal{B}_{c_l}$ we can find an atom of 
%$\mathcal{B}_{d_l}$ that avoids simultaneously $X_l$ and $X(\mathcal{F}_{3,l})$. 
\end{cla}

\begin{proof}
Let $A\in \mathcal{A}_{m_l}$. 
Using Lemma \ref{duga} for $C=[\pi_A^{-1}(A\cap X(\FF_{3,l}))]_{n_l}\in \B_{n_l}$ we get:
\[
\nu_{k+1,p}(C)\le 2w(\FF_{3,l})\le \frac{1}{2}.
\]
Since $(X_l,I_l,w_l)\in \E_{k,p}$, $X_l$ is $(m_l,n_l,\nu_{k+1,p})$-thin. 
Therefore, for $K=T\setminus [\pi_A^{-1}(A\cap X_l)]_{n_l}$, 
we have $\nu_{k+1,p}(K) > 1$. 
Since both $C$ and $K$ belong to $\B_{n_l}$, there is an atom $A'\in \mathcal{A}_{n_l}$, 
such that $A'\subseteq K\setminus C$. Since $\pi_A^{-1}(K)=K$
and $\pi_A^{-1}(C)=C$, we may find such $A'$ which is contained in $A$. 
\end{proof}

Now define a function $\Gamma : T\to T$ as follows. 
For $x\in T$ and $j\in \mathbb N \setminus W$, we let 
$\Gamma (x)(j)=x(j)$. Let us consider some $x\in T$ 
and an interval $[m_l,n_l)$ and suppose $\Gamma (x)\restriction m_l$
has been defined.  Let $v= \Gamma (x)\restriction m_l$. Applying Claim \ref{atoms} to
$A=N_v$, we find some $A'\in \mathcal A_{n_l}$ with $A'\seq A \setminus (X_l \cup X(\FF_{3,l}))$.
Let $v'$ be such that $A'=N_{v'}$. We then let $\Gamma (x) \restriction n_l=v'$. 
Notice that we have assured that $\Gamma (T)$ is disjoint from $X(\mathcal F_1)\cup X(\mathcal F_3)$. 

\begin{cla}\label{F_4} $\nu_{k+1,p}(\Gamma^{-1}[X(\FF_4)])<c_{k+1}$.
\end{cla}

\begin{proof}
Let $(X,I,w)\in \FF_4$. We want to estimate $\nu _{k+1,p}(\Gamma^{-1}[X])$. 
There is an $r$ such that $k+1\le r<p$ and $(X,I,w)\in \E_{r,p}$. 
We will show first that for $m,n\in I$, $m<n$, if $[m,n)$ is disjoint from $W$, 
then $\Gamma^{-1}[X]$ is $(m,n,\nu _{r+1,p})$-thin. 
In order to see this, let $A\in \A_m$, and let $A'$ be an atom in $\A_m$ 
such that $\Gamma (A)\subseteq A'$. 
Since  $X$ is $(m,n,\nu_{r+1,p})$-thin, within $A'$
we replicate the thinness of $X$ in $A'$ inside $A$ to establish the thinness of $\Gamma^{-1}[X]$. 
More precisely, since $X$ is $(m,n,\nu_{r+1,p})$-thin, within $A'$ there exists   
$H\in \mathcal{B}_n$ such that $H\seq A'$, $H\cap X=\emptyset$. 
and $\nu_{r+1,p} (\pi_{A'}^{-1}(H)) > 1$. Since $H$ is disjoint from $X$, 
we also have that $\Gamma^{-1}[H]$ is disjoint  from $\Gamma^{-1}[X]$. 
From $\Gamma(\pi_A(\pi_{A'}^{-1}(H))\seq H$, we get  

\[
\pi_{A'}^{-1}(H)\seq \pi_{A}^{-1}(\Gamma^{-1}[H])\seq  \pi_A^{-1}(H).
\]

\noindent We assumed  that $\nu_{r+1,p} (\pi_{A'}^{-1}(H)) >1$, 
so $\nu_{r+1,p} (\pi_A^{-1}(H)) > 1$, which proves that 
$\Gamma^{-1}[X]$ is $(m,n,\nu _{r+1,p})$-thin.

We now have to deal with pairs of elements of $I$ that are separated by $W$. 
Since thinness is monotone in the second coordinate, this is a problem only for $m,n\in I$ with $m<n$
such that $m$ is the  last element in $I$ preceding some interval $[m_l,n_l)$. 
We saw in the beginning of the proof that $\|I\| \geq 2^{k+8}s\geq 4s$, so $s\le \|I\|/4$. 
From the definition of $\FF_4$ we have 
 $\|I\setminus W\|\geq \|I\|/2$. For every $l< s$, let $i_l$ be the largest element of $I$ 
 below $m_l$. Then, for:
\[
I'=I\setminus (W\cup \{i_l:l< s\}),
\]
we have that  $\|I'\| \geq  \|I\|/4$. 
We now have that $\Gamma^{-1}[X]$ is  $(I',\nu _{r+1,p})$-thin. 
We also need a bound for the norm. Let 
\[
w'=w  \Big(\frac{\|I\|}{\|I'\|}\Big)^{a_k} \! \!  \le 4^{a_r} w\leq 4^{a_k} w.
\]
We have that $(\Gamma^{-1}[X],I',w')\in\E_{r,p}\subseteq \mathcal C_{k+1,p}$. 
This establishes $\nu _{k+1,p} (\Gamma^{-1}[X])\le 4^{a_k}w$.
Now we have,
$$
\nu _{k+1,p}(\Gamma^{-1}[X(\FF_4)])\le 4^{a_k}w(\FF_4)<4^{a_k}c_k=c_{k+1},
$$
as needed.
\end{proof}

Now, by Claim \ref{F_4} and the inductive assumption,
pick some $z\in N_u \setminus  \Gamma^{-1}[X(\FF_4)]$.
 Since $W\cap \dom (u)=\emptyset$ and $\Gamma (x)(j)=x(j)$,
 for every $x\in T$ and $j \notin W$, it follows that $\Gamma (z)\in N_u$.
 On the other hand, we have already shown that 
 $\Gamma (T)$ is disjoint from $X(\FF_1)$ and $X(\FF_3)$. 
 Since $z\in N_u \setminus  \Gamma^{-1}[X(\FF_4)]$ it follows that 
 \[
 \Gamma (z) \notin X(\FF_1)\cup X(\FF_3) \cup X(\FF_4) = X(\FF).
 \]
 This completes the proof of Proposition \ref{lower bound}.
 \end{proof}

\begin{defi} Let $\mathcal{U}$ be a non principal ultrafilter on $\mathbb N$. For $k\in \mathbb N$
and $E\in \mathcal B$, we define $\nu_k(E)=\lim_{p\rightarrow \mathcal{U}}\nu_{k,p}(E)$.
 We write $\nu$ for $\nu_0$. 
\end{defi}

\begin{prop}
For every integer $k$, $\nu_k$ is a submeasure and $\nu_k(N_u)\ge 8$,
for every $u\in \mathcal P$ with  $\| \dom (u)\| =1$. In particular, $\nu_k$
is not uniformly exhaustive. 
\end{prop}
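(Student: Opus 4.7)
The plan is that this proposition should be essentially a routine consequence of Proposition~\ref{lower bound}, with the three clauses requiring only a limit argument, an inspection of the sequence $(c_k)_k$, and the observation that singletons provide a canonical family of ``large'' pairwise disjoint sets.

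First, for the submeasure property: each $\nu_{k,p}$ is the outer submeasure $\phi_{\mathcal{C}_{k,p}}$ associated to a family of marked weighted sets, and it is immediate from the definition of $\phi_{\mathcal E}$ that $\phi_{\mathcal E}(\emptyset)=0$, that $\phi_{\mathcal E}$ is monotone (any cover of $Y$ also covers $X\subseteq Y$), and that $\phi_{\mathcal E}$ is subadditive (concatenate covers of $X$ and of $Y$). The ultrafilter limit $\nu_k(E)=\lim_{p\to\mathcal U}\nu_{k,p}(E)$ exists in the compact space $[0,+\infty]$, and each of the three submeasure axioms passes to pointwise ultrafilter limits, subadditivity using that addition is continuous on $[0,+\infty]$.

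Next, for the bound $\nu_k(N_u)\geq 8$ when $\|\dom(u)\|=1$: Proposition~\ref{lower bound} furnishes $\nu_{k,p}(N_u)\geq c_k$ for every $p\geq k$. The recursion $c_{k+1}=4^{a_k}c_k$ with $a_k>0$ shows $(c_k)_k$ is non-decreasing from $c_0=8$, so $c_k\geq 8$ for all $k$. Since $\mathcal U$ is non-principal, $\{p:p\geq k\}\in\mathcal U$, so the ultrafilter limit preserves the inequality and yields $\nu_k(N_u)\geq 8$.

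Finally, for non-uniform exhaustivity: fix an arbitrary integer $n$ and, for each $v<2^n$, let $u_v=\{(n,v)\}\in\mathcal P$. By clause $(1)$ of Definition~\ref{admissible-norms} we have $\|\dom(u_v)\|=\|\{n\}\|=1$, so by the previous step $\nu_k(N_{u_v})\geq 8$. The sets $\{N_{u_v}:v<2^n\}$ are $2^n$ pairwise disjoint clopen subsets of $T$, each of $\nu_k$-submeasure at least $8$. Since $n$ is arbitrary, for every integer $N$ there exists a pairwise disjoint family of at least $N$ elements of $\mathcal B$ of $\nu_k$-submeasure $\geq 8$, which directly contradicts uniform exhaustivity at $\varepsilon=8$. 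No step in this plan presents a genuine obstacle; all the combinatorial difficulty is already encapsulated in Proposition~\ref{lower bound}.
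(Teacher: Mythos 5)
Your proposal is correct and follows essentially the same route as the paper: submeasure axioms pass to the ultrafilter limit, Proposition \ref{lower bound} together with the observation that $c_k\geq c_0=8$ gives $\nu_k(N_u)\geq 8$, and the $2^n$ pairwise disjoint sets $N_{\{(n,i)\}}$, $i<2^n$, witness failure of uniform exhaustivity at $\epsilon=8$. You merely spell out a few details the paper leaves implicit (preservation of the axioms under $\mathcal U$-limits and the monotonicity of $(c_k)_k$), which is fine.
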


\begin{proof}
First note that $\nu_k$ is a submeasure as an ultrafilter limit of submeasures. 
Let $u \in \mathcal P$ be such that $\| \dom (u)\| =1$.
By Proposition \ref{lower bound},  $\nu_{k,p} (N_u)\ge c_k\ge 8$, for every $p\geq k$.
Therefore $\nu_k(N_u)\ge 8$, as well. 
Given an integer $n$ and $i<2^n$, let  $u_i=\{(n,i)\}$. 
Since $\| \{ n\} \| =1$, we have that $\nu_k(N_{u_i})\geq 8$, for all $i < 2^n$. 
Since the family $\{ N_{u_i}: i < 2^n\}$ is pairwise disjoint, for every $n$, it follows
that $\nu_k$ is not $8$-uniformly exhaustive. 
\end{proof}

For a countable ordinal $\alpha >0$, let us write $\nu^\alpha$ for the submeasure $\nu$ constructed from 
the admissible norm $\| \cdot \|_\alpha$ from Definition \ref{alpha-norm}.
%derived from the $\alpha$-th Schreier family $\mathscr S_\alpha$, as  defined in \S 2.
By  Lemma \ref{Rank} and Proposition \ref{lower bound}, we have the following immediate
corollary. 

\begin{coro}\label{exhaustivity-rank-Schreier} The exhaustivity rank of $\nu^\alpha$
is at least $\omega^\alpha$, for $0 < \alpha <\omega_1$. 
\qed
\end{coro}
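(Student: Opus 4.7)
The plan is to deduce the lower bound directly by combining the two quoted facts through an order-preserving embedding of well-founded trees. Taking $\epsilon = 8$, I will exhibit an injection $\Phi \colon \mathcal D(\mathcal P_\alpha) \to \mathcal D_8(\nu^\alpha)$ that is strictly monotone with respect to inclusion, from which the inequality
\[
{\rm rk}_8(\nu^\alpha) \;\geq\; {\rm rk}(\mathcal D(\mathcal P_\alpha)) \;=\; \delta(\mathcal P_\alpha) \;=\; \omega^\alpha
\]
follows immediately (the last equality is Lemma \ref{Rank}), hence ${\rm rk}(\nu^\alpha) \geq \omega^\alpha$.

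The map is the obvious one: send a finite pairwise incompatible set $F = \{u_0, \ldots, u_{n-1}\} \subseteq \mathcal P_\alpha$ to $\Phi(F) = \{N_{u_0}, \ldots, N_{u_{n-1}}\}$. To see that $\Phi(F) \in \mathcal D_8(\nu^\alpha)$, I need two things: first, that the $N_{u_i}$ are pairwise disjoint, which is just the observation that basic clopen sets $N_u, N_v$ are disjoint exactly when $u,v$ are incompatible in $\mathcal P$; second, that $\nu^\alpha(N_{u_i}) \geq 8$ for every $i$. The latter is exactly Proposition \ref{lower bound} applied to the admissible norm $\|\cdot\|_\alpha$ at the index $k=0$ (so $c_0 = 8$), since $u_i \in \mathcal P_\alpha$ means $\|\dom(u_i)\|_\alpha \leq 1$. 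The map $u \mapsto N_u$ is injective on $\mathcal P$, so $\Phi$ is injective and $F \subsetneq G$ iff $\Phi(F) \subsetneq \Phi(G)$.

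With $\Phi$ in hand, a routine transfinite induction on the rank in $(\mathcal D_8(\nu^\alpha), \supsetneq)$ shows that for every $F \in \mathcal D(\mathcal P_\alpha)$ one has ${\rm rk}(\mathcal D(\mathcal P_\alpha), F) \leq {\rm rk}_8(\nu^\alpha, \Phi(F))$: if $G \supsetneq F$ in $\mathcal D(\mathcal P_\alpha)$ then $\Phi(G) \supsetneq \Phi(F)$ in $\mathcal D_8(\nu^\alpha)$, so the inductive hypothesis gives ${\rm rk}(\mathcal D(\mathcal P_\alpha), G) \leq {\rm rk}_8(\nu^\alpha, \Phi(G))$, and taking $\sup(\cdot) + 1$ over such $G$ on the left is bounded by the corresponding sup on the right. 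Applying this to $F = \emptyset$ (with $\Phi(\emptyset) = \emptyset$) yields $\delta(\mathcal P_\alpha) \leq {\rm rk}_8(\nu^\alpha)$, and Lemma \ref{Rank} turns the left side into $\omega^\alpha$.

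There is really no obstacle here; this is a bookkeeping lemma that bundles Lemma \ref{Rank} and Proposition \ref{lower bound} together. The only point requiring a moment's care is verifying that the rank of $\mathcal D(\mathcal P_\alpha)$ under reverse inclusion (used to define $\delta$ via the game $\mathcal H_\alpha$) is computed by the same recursion $F \mapsto \sup\{{\rm rk}(G)+1 : G \supsetneq F\}$ that defines ${\rm rk}_8(\nu^\alpha)$, which is clear from the definitions, so that the map $\Phi$ transports ranks as claimed.
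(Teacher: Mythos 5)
Your proposal is correct and is exactly the argument the paper intends: the corollary is stated as an immediate consequence of Lemma \ref{Rank} and Proposition \ref{lower bound}, via the observation that $u\mapsto N_u$ carries finite pairwise incompatible subsets of $\mathcal P_\alpha$ to finite pairwise disjoint families of clopen sets of $\nu^\alpha$-submeasure at least $8$ (the bound $c_0=8$ passing to the ultrafilter limit), so ${\rm rk}_8(\nu^\alpha)\geq\delta(\mathcal P_\alpha)=\omega^\alpha$. Your rank-transfer bookkeeping is precisely the routine verification the paper suppresses, so there is nothing to add.
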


\section{Exhaustivity}

In this section we still work with a given admissible norm $\| \cdot \|$ 
and the submeasures $\nu_{k,p}$ given by Definition \ref{definition-submeasures}.
We now turn to the proof that the limit submeasures $\nu_k$ are exhaustive. 
We organize our argument in  a way to also be able  to provide upper bounds on 
their exhaustivity ranks.

\begin{lemma}\label{level-covers}
For every $k$ and $p$ with $k\leq p$, the submeasure $\nu_{k,p}$
has the covering property. 
\end{lemma}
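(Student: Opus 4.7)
My plan is to proceed by downward induction on $k$, from $k=p$ down to $k=0$. The base case $k=p$ is immediate: $\mathcal{C}_{p,p}=\emptyset$ forces any $E$ with $\nu_{p,p}(E)<2$ to be empty, and the empty collection vacuously satisfies the three conditions of Definition~\ref{covering-property}.

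For the inductive step, fix $m$ and $E\in \mathcal{B}$ independent of coordinates $<m$ with $\nu_{k,p}(E)<2$. Set $n=n(E)$ and, given a small $\varepsilon>0$, choose a finite $\mathcal{F}\subseteq\mathcal{C}_{k,p}$ covering $E$ with $w(\mathcal{F})<\nu_{k,p}(E)+\varepsilon<2+\varepsilon$. The idea is to construct $C_r^m(E)$ by projecting each triple $(X,I,w)\in\mathcal{F}$ to level $r$ via Lemma~\ref{duga}. Fix any atom $A_0\in\mathcal{A}_m$; for every $r\in(m,n]$ with $I\cap[m,r)$ nonempty, Lemma~\ref{duga} supplies a triple $(X_r,\,I\cap[m,r),\,w_r)\in\mathcal{C}_{k,p}$, where $X_r=[\pi_{A_0}^{-1}(X\cap A_0)]_r\in\mathcal{B}_r$ is invariant under coordinate shifts below $m$ and $w_r=w\cdot(\|I\|/\|I\cap[m,r)\|)^{a_k}$. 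I then take $C_r^m(E)\in\mathcal{B}_r$ to be $[E]_r$ intersected with the union of these level-$r$ projections, together with a catchall at level $m+1$ absorbing the triples with $I\cap(m,\infty)=\emptyset$.

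Condition~(1) holds by design. For~(2), given $A\in\mathcal{A}_j$ with $A\subseteq E$, the $\pi_{A_0}$-twin $A'=\pi_{A_0}(A)\subseteq A_0$ also lies in $E$ by invariance, hence in $X\cap A_0$ for some triple $(X,I,w)\in\mathcal{F}$; thus $A\subseteq \pi_{A_0}^{-1}(X\cap A_0)\subseteq X_j$ whenever $I\cap[m,j)\neq\emptyset$, putting $A$ in $C_j^m(E)$. The remaining case (when every triple covering $A'$ has $I\cap[m,j)=\emptyset$) is absorbed by the level-$(m+1)$ catchall, whose use is justified by the observation, from the weight condition $w\geq 2^{-k}(M_k/\|I\|)^{a_k}$, that triples with a very short intersection with $[m,\infty)$ must be few in a cover of total weight less than $2+\varepsilon$.

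The main obstacle is condition~(3): $\sum_r\nu_{k,p}(C_r^m(E))\leq 4$. By subadditivity and the weight formula from Lemma~\ref{duga}, this reduces to bounding $\sum_r w\cdot(\|I\|/\|I\cap[m,r)\|)^{a_k}\leq 2w$ for each $(X,I,w)\in\mathcal{F}$ over a carefully chosen set of contributing levels~$r$. The plan is to include only those $r$ at which $\|I\cap[m,r)\|$ crosses successive dyadic thresholds, producing $O(\log \|I\|)\leq O(\log M_k)$ pieces per triple; the smallness of the exponent $a_k=(k+5)^{-3}$, combined with the explicit calibration of $M_k$ in Definition~\ref{definition-submeasures}, then forces the resulting geometric-type sum to converge to at most $2w$. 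Summing over $\mathcal{F}$ yields $\sum_r\nu_{k,p}(C_r^m(E))\leq 2w(\mathcal{F})+O(\varepsilon)<4+2\varepsilon$, and letting $\varepsilon\to 0$ completes the induction. The heart of the argument is this delicate estimate, and the main difficulty lies in verifying that the balance chosen between $a_k$ and $M_k$ really is tight enough to push the constant below $4$.
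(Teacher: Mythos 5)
Your proposal reproduces the paper's first move (project each triple of a finite cover $\mathcal F$ of $E$ down to levels $r\in(m,n]$ via Lemma~\ref{duga}), but the two mechanisms that make the proof work are missing, and the step you yourself identify as the heart of the argument fails. First, the weight estimate: if you let a single triple $(X,I,w)$ contribute at every dyadic threshold of $\|I\cap[m,r)\|$, the total cost is about $w\,\|I\|^{a_k}\sum_j 2^{-ja_k}$; since $\|I\|$ can be as large as $M_k$ and $M_k^{a_k}\ge 2^{k+4}$ (already $\approx 17$ for $k=0$), even the single worst term exceeds $2w$, and the geometric factor $1/(1-2^{-a_k})\approx (k+5)^3$ makes it worse. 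No calibration of $a_k$ against $M_k$ rescues this. The paper instead assigns each triple to \emph{one} level only, the unique $r$ where $\|I\cap[m,r]\|$ first reaches $\tfrac12\|I\|$ (the families $\mathcal F_r$ in the proof of Lemma~\ref{level-covers} are pairwise disjoint), so the blow-up factor is $(\|I\|/\|I'\|)^{a_k}\le 2^{a_k}\le 2$ per triple and $\sum_r w(\mathcal F'_r)\le 2w(\mathcal F)\le 4$. Second, the covering condition (2): you fix an arbitrary atom $A_0\in\mathcal A_m$, assert that the twin of an atom of $E$ lies in $X\cap A_0$ for a \emph{single} triple (it is only covered by the union $X(\mathcal F)$), and dispose of the triples with small or empty $I\cap[m,\infty)$ by a ``catchall at level $m+1$'' justified by their being ``few''. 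Fewness (which does follow from $w\ge 2^{-k}$) gives no bound on the submeasure of their level-$(m+1)$ hulls, and those triples cannot be projected by Lemma~\ref{duga} without an uncontrolled weight loss, so neither (2) nor (3) is established for them.

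The paper's fix for both points uses the lower bound $\nu_{k,p}(T)\ge 8$ from Proposition~\ref{lower bound}, which your argument never invokes: the triples with $\|I\cap m\|\ge\tfrac14\|I\|$ are pushed into a set $B\in\mathcal B_m$ with $\nu_{k,p}(B)\le 4$, so some atom $A_m$ avoids them, and it is \emph{this} atom (not an arbitrary one) inside which the projections are formed; and the covering claim (Claim~\ref{new-primes}) is proved by contradiction — if some atom of ${\rm int}_j(E)$ inside $A_m$ escaped all $X(\mathcal F'_r)$, the remaining ``uncrossed'' triples, pushed by Lemma~\ref{duga} to depend only on coordinates $\ge j$ at weight cost $\le 2$, would cover all of $T$ with total weight $\le 4$, contradicting $\nu_{k,p}(T)\ge 8$. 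Two further, more cosmetic defects: taking $w(\mathcal F)<2+\varepsilon$ and ``letting $\varepsilon\to 0$'' does not produce a single covering sequence with $\sum_r\nu_{k,p}(C^m_r(E))\le 4$ (just take $w(\mathcal F)<2$, which is possible since $\nu_{k,p}(E)<2$), and the downward induction on $k$ is never actually used in your step — the paper's proof is not inductive.
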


\begin{proof} Suppose $m$ is an integer, $E\in \mathcal B$ does not depend
on coordinates $<m$ and $\nu_{k,p}(E)<2$. Let $n=n(E)$. 
We need to construct an $m$-covering sequence for $E$. 
Fix some $\mathcal F\subseteq \mathcal C_{k,p}$ with 
$E\subseteq X(\mathcal F)$ and $w(\mathcal F) < 2$. 
For $r>m$ we let:
\[
\mathcal F_{r}=\{(X,I,w)\in\mathcal F : \| I\cap [m,r)\|<\frac{1}{2}\|I\| \mbox{ \&  }
\|I\cap [m,r]\| \ge\frac{1}{2}\|I\| \}.
\]
We also let
\[
\mathcal F'=\{(X,I,w)\in\mathcal F: \|I\cap m\| \ge \frac{1}{4}\|I\|\}.
\]
We use Lemma \ref{duga} to get a set $B \in \mathcal B_m$ such that 
$X(\mathcal F')\subseteq B$ and 
\[
\nu_{k,p}(B)\le 4^{a_k}w(\mathcal F')\leq 4.
\]
Since $\nu_{k,p} (T)\ge 8$, $B\neq T$, so there exists $A_m\in \mathcal A_m$ 
such that $A_m\cap X(\mathcal F')=\emptyset$.
For $(X,I,w)\in  \mathcal F_{r}$, let $X'= [\pi^{-1}_{A_{p}}(X\cap A_m)]_r$
and $I'=I\cap[m,r]$. Note that $X\cap A_m\subseteq X'$. 
By Lemma \ref{duga} again, we can find some $w'\leq 2w$ such that
$(X',I',w')\in \mathcal C_{k,p}$.
 Let $ \mathcal F'_r$ be the collection of triples
$(X',I',w')$ obtained in this way. 

\begin{cla}\label{new-primes} For every $j$ such that $m<j\leq n$, we have
\[
{\rm int}_j(E)\subseteq \bigcup_{m<r\le j} X(\mathcal F'_r).
\]
\end{cla}

\begin{proof} Note that  ${\rm int}_j(E)$  and the sets $X(\mathcal F'_{r})$, for $m< r\leq j$, depend
only on the coordinates in the interval $[m,j)$. Therefore, if the inclusion does not hold we can 
find $A\in \mathcal A_j$ such that $A\subseteq A_m \cap E$, yet $A\cap X(\mathcal F'_r)=\emptyset$,
for all $r$ with $m<r\leq j$. Since $A_p\cap X(\mathcal F_r)\subseteq A_{p}\cap X(\mathcal F'_r)$,
it follows that $A\cap X(\mathcal F_r)=\emptyset$, for every $r$ with $m<r\leq j$. Finally, we have that 
$A_m\cap X(\mathcal F')=\emptyset$. All this means that $A\subseteq X(\mathcal F'')$,
where 
\[
\mathcal F''= \mathcal F \setminus (\mathcal F' \cup \bigcup_{r\leq j}\mathcal F_r).
\]
Note that if $(X,I,w)\in \mathcal F''$ then $\| I \setminus j\| \geq \frac{1}{4}\| I \|$. 
By applying Lemma \ref{duga} one more time, we can find a set $X^*$, covering $X$ and depending
only on coordinates $\geq j$, and some $w^*\leq 2w$ such that, letting $I^*=I\setminus j$,
we have $(X^*,I^*,w^*)\in \mathcal C_{k,p}$. Let $\mathcal F^*$ be the set of
triples $(X^*,I^*,w^*)$ obtained in this way. Then $X(\mathcal F^*)$ depends
only on coordinates $\geq j$ and contains $X(\mathcal F'')$, and 
\[
w(\mathcal F^*)\leq 2w(\mathcal F)\leq 4.
\]
Since $A\subseteq X(\mathcal F^*)$ and $A\in \mathcal B_j$, 
it follows that $X(\mathcal F^*)$ covers all of $T$.  This implies that $\nu_{k,p}(T)\leq 4$,
a contradiction.
\end{proof}

\noindent For $m < r \leq n$, the set $X(\mathcal F'_r)$ depends only on coordinates in the interval $[m,r)$ and
\[
\nu_{k,p}(X(\mathcal F'_r))\leq w(\mathcal F'_r)\leq 2w (\mathcal F_r).
\] 
Therefore, we get that: 
\[
\sum_{m<r\leq n} \nu_{k,p}(X(\mathcal F'_r)) \leq \sum_{m<r\leq n}2w(\mathcal F_r)\leq 2w(\mathcal F) \leq 4. 
\]
It follows that if we let  $C_r^m(E)= X(\mathcal F'_r)$, for  $m<r\leq n$,
the resulting sequence is an $m$-covering sequence for $E$. 
\end{proof}

\begin{lemma}\label{limit-covers} The submeasure $\nu_k$ has the covering property, for every $k$. 
\end{lemma}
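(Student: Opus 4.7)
The plan is to transfer the covering property from the approximations $\nu_{k,p}$ to their $\mathcal{U}$-limit $\nu_k$ by a pigeonhole argument that exploits the fact that, once $E$ is fixed, an $m$-covering sequence for $E$ lives in a finite set.

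Fix an integer $m$ and $E\in \mathcal B$ not depending on coordinates $<m$, with $\nu_k(E)<2$. Set $n=n(E)$. First I would observe that since $\nu_k(E)=\lim_{p\to\mathcal U}\nu_{k,p}(E)<2$, the set $P_0=\{p\ge k:\nu_{k,p}(E)<2\}$ belongs to $\mathcal U$. By Lemma~\ref{level-covers}, for each $p\in P_0$ we may pick an $m$-covering sequence $\langle C^{m,p}_r:m<r\le n\rangle$ for $E$ with respect to $\nu_{k,p}$, so that $C^{m,p}_r\in\mathcal B_r$, condition (2) of Definition~\ref{covering-property} holds, and $\sum_{m<r\le n}\nu_{k,p}(C^{m,p}_r)\le 4$.

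Next I would use that $\mathcal B_r$ is finite for every $r\le n$, so the product $\prod_{m<r\le n}\mathcal B_r$ is a finite set. The map $p\mapsto \langle C^{m,p}_r\rangle_r$ from $P_0$ into this finite product is constant on a set in $\mathcal U$: that is, there exist $C^m_r\in\mathcal B_r$ for $m<r\le n$ and a set $P_1\subseteq P_0$ with $P_1\in\mathcal U$ such that $C^{m,p}_r=C^m_r$ for every $p\in P_1$ and every $r$ with $m<r\le n$. I will take this fixed sequence $\langle C^m_r:m<r\le n\rangle$ as the candidate $m$-covering sequence for $E$ with respect to $\nu_k$.

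It remains to verify that this sequence satisfies the three clauses of Definition~\ref{covering-property} for $\nu_k$. Clause (1) is immediate since $C^m_r\in\mathcal B_r$. Clause (2), the inclusion $\mathrm{int}_j(E)\subseteq\bigcup_{m<r\le j}C^m_r$, is a purely set-theoretic statement not involving the submeasure, and it holds because, for any single $p\in P_1$, the sequence $\langle C^m_r\rangle_r$ coincides with the $m$-covering sequence for $\nu_{k,p}$, where the inclusion is already known. For clause (3), for every $p\in P_1$ we have $\sum_{m<r\le n}\nu_{k,p}(C^m_r)\le 4$; taking the $\mathcal U$-limit and using linearity of the limit across a finite sum gives
\[
\sum_{m<r\le n}\nu_k(C^m_r)=\lim_{p\to\mathcal U}\sum_{m<r\le n}\nu_{k,p}(C^m_r)\le 4,
\]
as required. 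Since $m$ and $E$ were arbitrary, $\nu_k$ has the covering property.

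The only point that requires any care is the appeal to pigeonhole: one has to note that $n=n(E)$ is a single fixed integer (not varying with $p$), so that the covering sequences indexed by $p\in P_0$ lie in a common finite product. With that observation in hand, the ultrafilter simply stabilizes the combinatorial data, and the three clauses survive the limit.
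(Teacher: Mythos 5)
Your proposal is correct and follows essentially the same route as the paper: choose $\nu_{k,p}$-covering sequences for $\mathcal U$-many $p$, use finiteness of $\prod_{m<r\le n}\mathcal B_r$ to stabilize one sequence on a set in $\mathcal U$, and pass the three clauses to the $\mathcal U$-limit. The only difference is that you spell out the final verification (in particular clause (3) via linearity of the ultrafilter limit over a finite sum), which the paper leaves as ``it is clear now.''
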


\begin{proof} Suppose $m$ is an integer, $E$ is a set in $\mathcal B$ that does not depend 
on coordinates $<m$, and $\nu_k(E) <2$. Let $n=n(E)$. By the definition of $\nu_k$, the set
$U =\{ p : \nu_{k,p}(E) < 2\}$ belongs to $\mathcal U$.
For each $p\in U$, fix an $m$-covering sequence $\{ C^m_{r,p}(E): m <r \leq n\}$ of $E$ with respect to $\nu_{k,p}$. 
Since $\mathcal U$ is a ultrafilters and the set of all possible such sequences is finite,
there is a fixed sequence $\{ C^m_r(E) : m < r \leq n\}$ such that
\[
V= \{ p\in U: C^m_{r,p}(E)= C^m_r(E), \mbox{ for all } m < r\leq n\} \in \mathcal U.
\]
It is clear now that  $\{ C^m_r(E): m <r \leq n\}$ is an $m$-covering sequence for $E$
with respect to $\nu_k$. 
\end{proof}

\begin{lemma}\label{limit-bound}
Let $k$ be an integer and $(E_i)_i$  a sequence of sets in ${\mathcal B}$ 
not depending on coordinates  $<m$ such that 	$\nu_k(\bigcup_{i< n}E_i) < 2$, for every $n$.
Then, for every $\eta >0$, there is  $C\in \mathcal B$ that does not depend on coordinates $<m$ such that
$\nu_k(C) \leq 4$ and $\nu_k(E_i\setminus C)\leq \eta$, for all $i$.
\end{lemma}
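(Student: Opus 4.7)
My plan is to build $C$ as the level-$J$ truncation, for a suitable finite $J$, of a diagonal limit of $m$-covering sequences of the partial unions $F_n:=\bigcup_{i<n} E_i$.

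First, since $\nu_k(F_n)<2$, Lemma~\ref{limit-covers} provides, for each $n$, an $m$-covering sequence $\{C_r^m(F_n):m<r\leq n(F_n)\}$ with $C_r^m(F_n)\in\mathcal{B}_r$, $\sum_r\nu_k(C_r^m(F_n))\leq 4$, and ${\rm int}_j(F_n)\subseteq\bigcup_{m<r\leq j}C_r^m(F_n)$ for each $m<j\leq n(F_n)$. I extend the sequence by $C_r^m(F_n)=\emptyset$ for $r>n(F_n)$. Inspection of the construction in Lemma~\ref{level-covers} shows that each $C_r^m(F_n)$ is invariant under coordinates $<m$, since it is built from preimages under maps $\pi_A$ with $A\in\mathcal{A}_m$.

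Because each $\mathcal{B}_r$ is a finite Boolean algebra, a standard diagonal/pigeonhole extraction produces an increasing subsequence $(n_l)_l$ such that, for every $r>m$, the values $C_r^m(F_{n_l})$ are eventually constant, equal to some $C_r\in\mathcal{B}_r$. Swapping a limit with a finite sum gives $\sum_{m<r\leq R}\nu_k(C_r)=\lim_l\sum_{m<r\leq R}\nu_k(C_r^m(F_{n_l}))\leq 4$ for every $R$, so $\sum_{r>m}\nu_k(C_r)\leq 4$. In particular, the series converges, so I can fix $J$ with $\sum_{r>J}\nu_k(C_r)\leq\eta$ and set $C:=\bigcup_{m<r\leq J}C_r\in\mathcal{B}_J$. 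Then $C$ does not depend on coordinates $<m$ and $\nu_k(C)\leq\sum_{m<r\leq J}\nu_k(C_r)\leq 4$.

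To verify $\nu_k(E_i\setminus C)\leq\eta$ for each $i$, fix $i$ and pick $l$ so that $n_l>i$ and $C_r^m(F_{n_l})=C_r$ for all $m<r\leq n(E_i)$; this is possible since only finitely many $r$ are involved and stabilization holds pointwise. Since $E_i\in\mathcal{B}_{n(E_i)}$ and $E_i\subseteq F_{n_l}$, every atom of $E_i$ at level $n(E_i)$ lies inside $F_{n_l}$, so $E_i\subseteq{\rm int}_{n(E_i)}(F_{n_l})$. Note that $n(F_{n_l})\geq n(E_i)$ because $E_i$ is one of the summands of $F_{n_l}$; hence property $(2)$ of the $m$-covering sequence applies at $j=n(E_i)$ and gives $E_i\subseteq\bigcup_{m<r\leq n(E_i)}C_r^m(F_{n_l})=\bigcup_{m<r\leq n(E_i)}C_r$. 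If $n(E_i)\leq J$ then $E_i\subseteq C$ and $\nu_k(E_i\setminus C)=0$. Otherwise $E_i\setminus C\subseteq\bigcup_{J<r\leq n(E_i)}C_r$, and subadditivity gives $\nu_k(E_i\setminus C)\leq\sum_{J<r\leq n(E_i)}\nu_k(C_r)\leq\sum_{r>J}\nu_k(C_r)\leq\eta$, as required. The only genuinely delicate point is the diagonal extraction together with the observation that the tail $\sum_{r>J}\nu_k(C_r)$ of the limit series controls $E_i\setminus C$ \emph{uniformly} in $i$, regardless of how large $n(E_i)$ gets; after this is isolated, the rest is routine bookkeeping with the covering property.
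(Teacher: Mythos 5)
Your proof is correct and follows essentially the same route as the paper: the paper likewise takes $m$-covering sequences of the partial unions $\bigcup_{i<n}E_i$, stabilizes them level by level (via K\"onig's Lemma, which here amounts to the same thing as your diagonal/pigeonhole extraction since each $\mathcal B_r$ is finite), and then truncates the limit sequence at a level past which the tail weight is at most $\eta$. One small inaccuracy: your claim that $n(F_{n_l})\geq n(E_i)$ because $E_i$ is a summand of $F_{n_l}$ is not true in general (a union can depend on fewer coordinates than its pieces), but this is harmless, since if $n(F_{n_l})<n(E_i)$ one applies property $(2)$ at $j=n(F_{n_l})$ to get $E_i\subseteq F_{n_l}\subseteq\bigcup_{m<r\leq n(F_{n_l})}C_r\subseteq\bigcup_{m<r\leq n(E_i)}C_r$, so the containment you need holds anyway.
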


\begin{proof} For each $n$, let $G_n=\bigcup_{i<n}E_i$ and let $\vec{C}(G_n)$ be
an $m$-covering sequence for $G_n$. Since, for each $l>m$, there are only finitely many possibilities
for $\vec{C}(G_n)\restriction [m,l)$,  by K\"{o}nig's Lemma 
there is an infinite sequence $\vec{C}=\{ C_j:  j > m \}$ such that, for every $l>m$, there
are arbitrary large $n$ such that $\vec{C}\restriction l = \vec{C}(G_n)\restriction l$.
It follows that 
\[
 \bigcup_iE_i \subseteq \bigcup_{m < j}C_j \mbox{ and } \sum_{m<j}\nu_k(C_j)\leq 4.
\] 
 Let $l$ be such that $\sum_{l\leq j} \nu_k(C_j) \leq \eta$.
Then the set $C=\bigcup \{ C_j: m < j < l\}$ satisfies the conclusion of the lemma. 
\end{proof}

\begin{lemma}\label{finding-thinness}
Let $k$ and $m$ be integers and $\eta >0$. %Suppose $B\in \mathcal B_m$ and
Suppose $(E_i)_i$ is a pairwise disjoint sequence of sets in $\mathcal B$. Then there is $n >m$
and $B\in \mathcal B_n$, $B$ is $(m,n,\nu_k)$-thin, and
$\limsup_{i\rightarrow \infty}\nu_k(E_i\setminus B)\leq \eta$.
\end{lemma}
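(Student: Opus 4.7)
The plan is to reduce to an application of Lemma~\ref{limit-bound} by working separately inside each atom of rank $m$. Let $K := |\mathcal{A}_m|$, which is finite. For each atom $A \in \mathcal{A}_m$ I will produce a clopen $B_A \subseteq A$ and a clopen $H_A \subseteq A \setminus B_A$ such that $\nu_k(\pi_A^{-1}(H_A)) > 1$ and $\limsup_i \nu_k((E_i \cap A) \setminus B_A) \leq \eta/K$. Then $B := \bigcup_{A \in \mathcal{A}_m} B_A$ lies in $\mathcal{B}_n$ for $n$ at least the maximum rank of the $B_A$ and the $H_A$, and since the $B_A$ are confined to distinct atoms, $A \setminus B = A \setminus B_A \supseteq H_A$, witnessing that $B$ is $(m,n,\nu_k)$-thin. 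The $\limsup$ estimate $\nu_k(E_i \setminus B) \leq \eta$ then follows from the $K$-atom decomposition $E_i = \bigsqcup_A (E_i \cap A)$ and subadditivity.

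The construction of $B_A$ will proceed by lifting the local sequence: set $\tilde{E}_i^A := \pi_A^{-1}(E_i \cap A)$, a pairwise disjoint family of clopen sets not depending on coordinates below $m$, with $\tilde{E}_i^A \supseteq E_i \cap A$. Whenever the hypothesis $\nu_k(\bigcup_{i<N} \tilde{E}_i^A) < 2$ of Lemma~\ref{limit-bound} is available, applying that lemma with tolerance $\eta/K$ yields a clopen cover $C^A$ independent of coordinates below $m$, with $\nu_k(C^A) \leq 4$ and $\nu_k(\tilde{E}_i^A \setminus C^A) \leq \eta/K$ for all $i$. I then set $B_A := C^A \cap A$ and $H_A := A \setminus C^A$. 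Because $C^A$ does not depend on coordinates below $m$, we have $\pi_A^{-1}(H_A) = T \setminus C^A$, so by Proposition~\ref{lower bound} and subadditivity $\nu_k(\pi_A^{-1}(H_A)) \geq \nu_k(T) - \nu_k(C^A) \geq 8 - 4 = 4 > 1$; and the $\limsup$ estimate follows from $(E_i \cap A) \setminus B_A \subseteq \tilde{E}_i^A \setminus C^A$.

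The main obstacle will be arranging the hypothesis of Lemma~\ref{limit-bound} for the lifted sequence. A priori, the partial unions $\nu_k(\bigcup_{i<N}\tilde{E}_i^A)$ can exceed $2$, although they remain bounded by $\nu_k(T)$, which is finite. To get around this I plan to partition the indices into finitely many consecutive blocks on which the partial-union submeasures stay below $2$; boundedness of the total $\nu_k(\bigcup_i \tilde{E}_i^A)$ ensures that only finitely many such blocks precede the terminal infinite one. I then apply Lemma~\ref{limit-bound} on that terminal block to obtain the main part of $C^A$, and handle the finitely many early-block indices by hand. Since finitely many indices do not affect the limsup, the limsup bound is preserved, and the slack between $\nu_k(T) \geq 8$ and $\nu_k(C^A) \leq 4$ leaves enough room to absorb a bounded number of additional early-block contributions into $C^A$ without destroying the thinness inequality $\nu_k(\pi_A^{-1}(H_A)) > 1$.
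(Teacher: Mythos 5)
Your overall skeleton (work atom by atom, lift $E_i\cap A$ via $\pi_A^{-1}$, apply Lemma~\ref{limit-bound} to get a cover $C^A$ with $\nu_k(C^A)\leq 4$, take $H_A=A\setminus C^A$ and use $\nu_k(T)\geq 8$ for thinness, then reassemble over the $|\mathcal A_m|$ atoms) is exactly the paper's, and that part is fine. The genuine gap is in the case where the hypothesis of Lemma~\ref{limit-bound} fails, i.e.\ when some partial union $\nu_k\bigl(\bigcup_{i<N}\pi_A^{-1}(E_i\cap A)\bigr)$ reaches $2$. Your plan — split the indices into finitely many blocks on which partial unions stay below $2$, apply Lemma~\ref{limit-bound} on a terminal infinite block, and absorb the early indices into $C^A$ — does not go through. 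First, the existence of a terminal infinite block is not implied by boundedness of $\nu_k\bigl(\bigcup_i \tilde E_i^A\bigr)$: a submeasure is only monotone and subadditive, so it is perfectly consistent that every tail of the sequence has partial unions eventually exceeding $2$ while the total union has small submeasure (think of $\mu(S)=\min(|S|/2,3)$ on disjoint pieces); ruling this out for disjoint sequences is essentially the exhaustivity one is in the middle of proving, so the step is close to circular. (Also, the finiteness of $\nu_k(T)$ that you invoke is never established in the paper — only $\nu_k(T)\geq 8$ is.) Second, even granting finitely many early indices, "absorbing" them into $C^A$ is not harmless: their total submeasure is not bounded by anything useful (a single $\tilde E_i^A$ can already have submeasure $\geq 2$, and the number of early indices is an uncontrolled finite number), so the enlarged $C^A$ need not satisfy $\nu_k(C^A)<7$, and the thinness estimate $\nu_k(T\setminus C^A)>1$ can be destroyed.

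The missing idea is the paper's dichotomy at threshold $1$ rather than an attempt to force Lemma~\ref{limit-bound} to apply. If there exists $r$ with $\nu_k\bigl(\pi_A^{-1}(A\cap\bigcup_{i<r}E_i)\bigr)>1$, one does not try to cover these early sets at all: one takes $H(A)=A\cap\bigcup_{i<r}E_i$ itself as the thinness witness. It has relative submeasure $>1$ by assumption, and — because the $E_i$ are pairwise disjoint — it is automatically disjoint from every $E_i$ with $i\geq r$, so its contribution to the limsup is zero. Only in the complementary case, where all these relative partial unions are $\leq 1<2$, does one apply Lemma~\ref{limit-bound} (this is exactly your main construction). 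In other words, the early sets should be used as the resource from which the large set $H(A)$ is built, not treated as garbage to be covered; your proposal lacks this step and its replacement does not work.
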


\begin{proof}
Let $\eta'=\eta/| \mathcal A_m|$. For all $A\in \mathcal A_m$ we define
$H(A)\subseteq A$ such that 
\[ 
\nu_k(\pi_A^{-1}(H(A))) >1 \mbox{ and }
\limsup_{i\rightarrow \infty}\nu_k(E_i\cap  H(A))\leq \eta'.
\]
		
\noindent {\em Case 1.} There exists an integer $r$ such that $\nu_k (\pi_A^{-1}(A\cap \bigcup_{i< r}E_i))>1$.
\medskip

\noindent We let  $H(A)=A\cap \bigcup_{i< r}E_i$. Note that $E_i\cap H(A)=\emptyset$,
for all $i\geq r$.
\medskip

\noindent {\it Case 2.} $\nu_k (\pi_A^{-1}(A\cap \bigcup_{i< r}E_i))\leq 1$, for every integer $r$.

\medskip

\noindent Since the sets $\pi_A^{-1}(E_i)$ do not depend on coordinates $<m$, 
by Lemma \ref{limit-bound}, we can find $C\in \mathcal B$ that does not depend
on coordinates $<m$ such that $\nu_k (C)\le 4$ and 
\[
\limsup_{i\rightarrow \infty}\nu_k(\pi_A^{-1}(E_i)\setminus C)\leq \eta'.
\]
If we take $H(A)= A\setminus C$, then $C=T\setminus \pi_A^{-1}(H(A))$. %hence $\nu_k (\pi_A^{-1} (H(A)))\leq 4$. 
Since $\nu_k(T)\geq 8$, we have  $\nu_k (\pi_A^{-1} (H(A))) >1$. 
Since $\pi_A$ is the identity on $A$, we have that, for all $i$,
\[
 E_i\cap H(A)  \subseteq \pi_A^{-1}(E_i)\setminus C,
\]
and so
\[
\limsup_{i\rightarrow \infty} \nu_k(E_i\cap H(A)) \leq 
\limsup_{i \rightarrow \infty} \nu_k(\pi_A^{-1}(E_i)\setminus C)\leq \eta'.
\]

\noindent Now, let $D(A)=A\setminus H(A)$, let $B=\bigcup \{D(A): A\in \mathcal A_m\}$,
and let $n$ be the least such that $B\in \mathcal B_n$.
Then $B$ and $n$ are as required. 
\end{proof}

\begin{lemma}\label{limit-thinness} The submeasure $\nu_k$
has the $s$-thinness property, for all $s\geq k$. 
\end{lemma}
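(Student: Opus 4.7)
The key observation is that $\mathcal C_{k,p}$ is decreasing in $k$ by construction, so $\nu_{k,p} \le \nu_{k+1,p} \le \cdots \le \nu_{s+1,p}$. Thus, any lower bound on $\nu_k$ transfers, via ultrafilter limits, to a lower bound on $\nu_{s+1,p}$ for $\mathcal U$-many $p$. This will let us verify the $(I,\nu_{s+1,p})$-thinness of $X$ needed to place $(X,I,w)$ into $\mathcal E_{s,p}$.

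Fix $s\ge k$ and suppose $X\in\mathcal B$ is $(I,\nu_k)$-thin with $\|I\|=M_s$. The goal is $\nu_k(X)\le 2^{-s}$. For each pair $m<n$ in $I$ and each $A\in\mathcal A_m$, the thinness of $X$ relative to $\nu_k$ provides some $H=H_{A,m,n}\in\mathcal B_n$ with $H\subseteq A\setminus X$ and $\nu_k(\pi_A^{-1}(H))>1$. Since $\nu_k(\pi_A^{-1}(H))=\lim_{p\to\mathcal U}\nu_{k,p}(\pi_A^{-1}(H))$, the set
\[
U_{A,m,n}=\{p:\nu_{k,p}(\pi_A^{-1}(H_{A,m,n}))>1\}
\]
belongs to $\mathcal U$. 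There are only finitely many triples $(A,m,n)$ (since $I$ is finite and each $\mathcal A_m$ is finite), so their intersection $U$, further intersected with $\{p:p>s\}$, still lies in $\mathcal U$. Using $\nu_{k,p}\le\nu_{s+1,p}$ (which holds because $\mathcal C_{s+1,p}\subseteq\mathcal C_{k,p}$), for every $p\in U$ and every pair $m<n$ in $I$ and every $A\in\mathcal A_m$ we have $\nu_{s+1,p}(\pi_A^{-1}(H_{A,m,n}))>1$. Therefore, for every $p\in U$, the set $X$ is $(I,\nu_{s+1,p})$-thin.

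For such $p$, the triple $(X,I,2^{-s})$ verifies all the clauses in the definition of $\mathcal E_{s,p}$: indeed $\|I\|=M_s\le M_s$, the weight $2^{-s}=2^{-s}(M_s/\|I\|)^{a_s}$ since $\|I\|=M_s$, and $X$ is $(I,\nu_{s+1,p})$-thin. Hence $(X,I,2^{-s})\in \mathcal E_{s,p}\subseteq \mathcal C_{s,p}\subseteq\mathcal C_{k,p}$ (using $k\le s$), which directly gives $\nu_{k,p}(X)\le 2^{-s}$ for every $p\in U$. Since $U\in\mathcal U$, taking the $\mathcal U$-limit yields $\nu_k(X)\le 2^{-s}$, as required.

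The only nontrivial step is the first paragraph: transferring a thinness property defined via $\nu_k$ (itself an ultrafilter limit) to a thinness property relative to $\nu_{s+1,p}$ for sufficiently many $p$. What makes this work is that thinness is a finitary condition — only finitely many witnesses $H_{A,m,n}$ are required — so one can legitimately intersect finitely many members of $\mathcal U$ and use the monotonicity of $\nu_{r,p}$ in $r$ to move from $\nu_{k,p}>1$ to $\nu_{s+1,p}>1$ on the witnesses.
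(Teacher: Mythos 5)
Your proof is correct and follows essentially the same route as the paper: fix the finitely many thinness witnesses, use the ultrafilter limit and non-principality of $\mathcal U$ to get $(I,\nu_{k,p})$-thinness for $\mathcal U$-many $p>s$, upgrade to $(I,\nu_{s+1,p})$-thinness via $\nu_{k,p}\leq\nu_{s+1,p}$, place $(X,I,2^{-s})$ in $\mathcal E_{s,p}\subseteq\mathcal C_{k,p}$, and pass to the limit. You merely spell out in detail the finitary transfer step that the paper compresses into one sentence.
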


\begin{proof} 
Fix some $s\geq k$, and suppose $\| I \| =M_s$ and $X$ is $(I,\nu_k)$-thin. 
Since this property of $X$ depends on the fact that the submeasure of finitely many sets is $>1$ 
and $\mathcal U$ is non principal,  it follows that:
\[
U = \{  p \geq s+1 : X \mbox{ is } (I,\nu_{k,p}) \mbox{-thin}                      \} \in \mathcal U.
\]
%\[
%U=\{ p\geq s+1 : X \mbox{ is } (I,\nu_{k,p})\mbox{-thin} \} \in {\mathcal U}.
%\] 
Fix some $p\in U$. Since $\nu_{k,p} \leq \nu_{s+1,p}$, we have
that $X$ is also $(I,\nu_{s+1,p})$-thin, and hence $(X,I,2^{-s})\in \mathcal E_{s,p}$.
It follows that  $\nu_{k,p}(X)\leq 2^{-s}$. Since this holds for all $p\in U$
and $\nu_k = \lim_{p \rightarrow \mathcal U}\nu_{k,p}$, we conclude that $\nu_k(X)\leq 2^{-s}$, as desired.
\end{proof}

\begin{prop}\label{exhaustivity} For every integer $k$, the submeasure $\nu_k$ is exhaustive. 
\end{prop}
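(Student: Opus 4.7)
The plan is to argue by contradiction: assume there is a pairwise disjoint sequence $(E_i)_i$ in $\B$ and $\delta > 0$ with $\nu_k(E_i) \geq \delta$ for all $i$. Fix $s \geq k$ large enough so that $2^{-s} < \delta/2$ and set $\eta_j = \delta\, 2^{-j-3}$, so that $\sum_j 2\eta_j \leq \delta/2$. The goal is to produce, for some $i$, a set $\tilde E_i \in \B$ such that $\nu_k(E_i \setminus \tilde E_i) < \delta/2$ and $\tilde E_i$ is $(I,\nu_k)$-thin for some $I$ with $\|I\| = M_s$. Then Lemma \ref{limit-thinness} forces $\nu_k(\tilde E_i) \leq 2^{-s} < \delta/2$, and subadditivity gives $\nu_k(E_i) < \delta$, a contradiction.

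The set $\tilde E_i$ will be obtained by iterating Lemma \ref{finding-thinness}. Set $m_0 = 0$ and $(E_i^{(0)}) = (E_i)$. At stage $j$, apply Lemma \ref{finding-thinness} to the pairwise disjoint sequence $(E_i^{(j)})_i$ with parameters $m_j$ and $\eta_j$ to obtain an integer $m_{j+1} > m_j$ and a set $B_j \in \B_{m_{j+1}}$ that is $(m_j, m_{j+1}, \nu_k)$-thin and satisfies $\limsup_i \nu_k(E_i^{(j)} \setminus B_j) \leq \eta_j$; then pass to an infinite tail $(E_i^{(j+1)})$ of $(E_i^{(j)})$ on which $\nu_k(E_i^{(j+1)} \setminus B_j) \leq 2\eta_j$ holds for all $i$. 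Because $\|\cdot\|$ is unbounded and the norm of a finite set rises by at most one when a new element is adjoined, the sequence $\|\{m_0, \ldots, m_r\}\|$ first attains the value $M_s$ at some finite stage $r$. Stop the iteration there, set $I = \{m_0, \ldots, m_r\}$ and $(E_i') = (E_i^{(r)})$, and define
\[
\tilde E_i = E_i' \cap \bigcap_{j<r} B_j.
\]

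Two routine checks will then finish the argument. For the error estimate, subadditivity yields $\nu_k(E_i' \setminus \tilde E_i) \leq \sum_{j<r} \nu_k(E_i' \setminus B_j) \leq \sum_{j<r} 2\eta_j < \delta/2$. For the thinness claim, fix $m_j < m_l$ in $I$ and $A \in \A_{m_j}$; the $(m_j, m_{j+1}, \nu_k)$-thinness of $B_j$ produces $H \in \B_{m_{j+1}} \subseteq \B_{m_l}$ with $H \subseteq A \setminus B_j \subseteq A \setminus \tilde E_i$ and $\nu_k(\pi_A^{-1}(H)) > 1$, so $\tilde E_i$ is $(m_j, m_l, \nu_k)$-thin; as this holds for every pair in $I$, $\tilde E_i$ is $(I, \nu_k)$-thin and Lemma \ref{limit-thinness} applies.

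The main obstacle is arranging that the iteration terminates at a finite stage with $\|I\|$ exactly $M_s$: this is precisely where unboundedness of the admissible norm is used, together with the observation that adjoining a new element to a finite set raises its norm by $0$ or $1$, so the integer sequence $\|\{m_0,\ldots,m_r\}\|$ hits every value on its way up. A secondary delicacy is that the intersection $\tilde E_i$ retains $(I,\nu_k)$-thinness even though each $B_j$ only witnesses the consecutive pair $(m_j,m_{j+1})$; this is handled by the monotonicity of thinness in the second coordinate, which reduces arbitrary pairs in $I$ to consecutive ones.
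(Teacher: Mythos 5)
Your proposal is correct and follows essentially the same route as the paper: iterate Lemma \ref{finding-thinness} to get consecutive-thin sets $B_j$ along an increasing sequence $m_0<m_1<\cdots$, stop when $\|\{m_0,\ldots,m_r\}\|=M_s$ (using unboundedness, with your explicit observation that adjoining a point raises the norm by at most one), intersect the $B_j$ to get an $(I,\nu_k)$-thin set of submeasure at most $2^{-s}$ by the $s$-thinness property, and finish by subadditivity. The only differences are cosmetic: you argue by contradiction and pass to tails of $(E_i)$, whereas the paper keeps $\limsup$ bounds throughout and lets $s$ and $\epsilon$ vary at the end.
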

\begin{proof} Fix  $k$ and suppose $(E_i)_i$ is a pairwise disjoint sequence of sets in $\mathcal B$. 
Fix some $s\geq k$ and $\epsilon >0$. Starting with $n_0=0$,
we use Lemma \ref{finding-thinness} repeatedly to construct an increasing sequence of integers $(n_l)_l$
and sets $B_l\in \mathcal B_{n_{l+1}}$ such that, for all $l$,  
$B_l$ is $(n_l,n_{l+1}, \nu_k)$-thin, and
\[
\limsup_{i\rightarrow \infty} \nu_{k}(E_i \setminus B_{l})\leq \frac{\epsilon}{2^{l+1}}.
\]
Let $I_l=\{ n_0,n_1, \ldots, n_l\}$. Since our norm $\| \cdot \|$ is unbounded, there is $l$ such that $\| I_l \|=M_s$.
Let $B= \bigcap_{i <l}B_i$. Then  the set $B$ is $(I_l, \nu_k)$-thin.  
By the $s$-thinness property of $\nu_k$ we have that $\nu_k(B)\leq 2^{-s}$.
Now, by the subadditivity of $\nu_k$ we have:

\[
\limsup_{i\rightarrow \infty}\nu_{k}(E_i\setminus B) \leq 
\sum_{j<l} \limsup_{i\rightarrow \infty}\nu_{k}(E_i \setminus B_j) \leq  (1-\frac{1}{2^{l+1}})\epsilon.
\]

Since $s\geq k$ and $\epsilon>0$ were arbitrary, it follows that $\limsup_{i\rightarrow \infty}\nu_k(E_i)=0$.
This completes the proof that $\nu_k$ is exhaustive. 
\end{proof}

Now, by combining  Proposition \ref{exhaustivity} and Corollary \ref{exhaustivity-rank-Schreier}
we obtain our main result. 

\begin{teo}\label{Glavna}
There are exhaustive submeasures on $\mathcal B$ of arbitrary high countable exhaustivity rank. 
\qed
\end{teo}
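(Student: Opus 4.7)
The plan is to combine the two results immediately preceding the theorem, since each of the two required properties has already been established for the Schreier-based submeasures $\nu^\alpha$. Specifically, given a countable ordinal $\beta$, I would choose a countable ordinal $\alpha$ with $\omega^\alpha \geq \beta$ (any $\alpha \geq \beta$ works, since $\beta \leq \omega^\beta$ in ordinal arithmetic). I would then produce the admissible Schreier norm $\|\cdot\|_\alpha$ from Definition~\ref{alpha-norm}, run the construction of Definition~\ref{definition-submeasures} with this norm, fix a non-principal ultrafilter $\mathcal{U}$ on $\mathbb N$, and form the limit submeasure $\nu^\alpha = \nu_0 = \lim_{p\to\mathcal{U}}\nu_{0,p}$.

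Once this is set up, essentially nothing remains to prove: Proposition~\ref{exhaustivity} (applied with $k=0$) tells us that $\nu^\alpha$ is exhaustive on $\mathcal B$, while Corollary~\ref{exhaustivity-rank-Schreier} gives the lower bound ${\rm rk}(\nu^\alpha)\geq \omega^\alpha \geq \beta$. Since $\beta$ was an arbitrary countable ordinal, this yields exhaustive submeasures of arbitrarily high countable exhaustivity rank. The only step that even requires a sentence to justify is the ordinal inequality $\beta \leq \omega^\beta$, which is a standard consequence of the Cantor normal form.

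There is no genuine obstacle at this stage of the paper: all the technical work has been carried out in previous sections, and this theorem merely records the conclusion. If anything, the presentation question is whether to spell out a one-sentence argument or simply cite the two results and close with \textbf{qed}, and the author has opted for the latter by marking the statement with \texttt{\textbackslash qed}.
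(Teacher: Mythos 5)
Your proposal is correct and is essentially the paper's own argument: the theorem is obtained by combining Proposition~\ref{exhaustivity} with Corollary~\ref{exhaustivity-rank-Schreier}, choosing $\alpha$ so that $\omega^\alpha$ exceeds the given countable ordinal. The paper records exactly this one-line combination (hence the \texttt{\textbackslash qed} on the statement), so there is nothing to add.
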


Let $\mathscr E$ denote the set of all exhaustive submeasure on $\mathcal B$. 
It is easy to see that $\mathscr E$ is a co-analytic subset of $[0,+\infty]^{\mathcal B}$ 
with the product topology. We now have the following  corollary.

%Using the rank method (see \cite{Kech:DST}) we have the following. 

\begin{coro}
The set $\mathscr E$ of exhaustive submeasures on $\mathcal B$ is not Borel.
\end{coro}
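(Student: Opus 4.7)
The plan is to invoke the boundedness theorem for $\Pi^1_1$-ranks on analytic sets: if $\varphi$ is a $\Pi^1_1$-rank on a $\Pi^1_1$ set $C$ and $A\subseteq C$ is analytic, then $\sup_{x\in A}\varphi(x)<\omega_1$. Applied with $A=C=\mathscr E$, this will show that if $\mathscr E$ were Borel (hence analytic) then $\mathrm{rk}$ would be bounded strictly below $\omega_1$ on $\mathscr E$, directly contradicting Theorem~\ref{Glavna}.

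The key technical step is to exhibit $\mathrm{rk}(\cdot)$ as a $\Pi^1_1$-rank on $\mathscr E$. Fix an enumeration $(b_n)_{n\in\mathbb N}$ of $\mathcal B$ and, for each rational $\epsilon>0$, form the tree
\[
T_\epsilon(\nu)=\{(n_0,\dots,n_{k-1})\in\mathbb N^{<\omega}:n_0<\cdots<n_{k-1},\ b_{n_0},\dots,b_{n_{k-1}}\text{ pairwise disjoint},\ \nu(b_{n_i})\geq\epsilon\text{ for }i<k\},
\]
ordered by end-extension. The map $\nu\mapsto T_\epsilon(\nu)$ from the Polish space $[0,+\infty]^{\mathcal B}$ to $2^{\mathbb N^{<\omega}}$ is Borel, and $\nu$ is exhaustive if and only if every $T_\epsilon(\nu)$ is well-founded. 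Via the sorted-enumeration bijection between nodes of $T_\epsilon(\nu)$ and members of $\mathcal D_\epsilon(\nu)$, the well-founded rank of $T_\epsilon(\nu)$ agrees with $\mathrm{rk}_\epsilon(\nu)$. Standard descriptive set theory then yields that $\nu\mapsto\mathrm{rk}_\epsilon(\nu)$ is a $\Pi^1_1$-rank on the $\Pi^1_1$ set $\{\nu:T_\epsilon(\nu)\text{ is well-founded}\}$, and taking the supremum over the countably many rational $\epsilon>0$ combines these into a $\Pi^1_1$-rank on $\mathscr E$ whose value at $\nu$ is exactly $\mathrm{rk}(\nu)$.

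With these ingredients in place, Theorem~\ref{Glavna} supplies, for each countable ordinal $\alpha$, some $\nu\in\mathscr E$ with $\mathrm{rk}(\nu)\geq\alpha$, so $\sup\{\mathrm{rk}(\nu):\nu\in\mathscr E\}=\omega_1$. The boundedness theorem then immediately rules out $\mathscr E$ being analytic, hence in particular rules out $\mathscr E$ being Borel. The only mildly delicate point is the last step of passing from the countable family of $\Pi^1_1$-ranks $\mathrm{rk}_\epsilon$ to a single $\Pi^1_1$-rank $\mathrm{rk}$; this is routine, since the comparison relations $\leq^{*}$ and $<^{*}$ witnessing that $\mathrm{rk}$ is a $\Pi^1_1$-rank can be written as uniform countable conjunctions of the corresponding relations for the $\mathrm{rk}_\epsilon$, each of which is $\Pi^1_1$ by the classical tree-rank construction.
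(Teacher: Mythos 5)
Your proposal is correct and takes essentially the same route as the paper: it exhibits $\nu \mapsto {\rm rk}(\nu)$ as a $\mathbf{\Pi}^1_1$-rank on the co-analytic set $\mathscr E$ (via the tree/rank construction the paper leaves implicit) and then applies the boundedness method together with Theorem~\ref{Glavna}, exactly as the paper does. One small imprecision: the comparison relations for ${\rm rk}=\sup_\epsilon {\rm rk}_\epsilon$ are \emph{not} literally countable conjunctions of those for the individual ${\rm rk}_\epsilon$ (the suprema need not be realized coordinatewise), but this is harmless --- for instance you can replace ${\rm rk}$ by the rank of the single tree obtained by amalgamating the trees $T_{1/k}(\nu)$ below a common root, which is a $\mathbf{\Pi}^1_1$-rank by the classical construction and dominates ${\rm rk}(\nu)$, or observe that already ${\rm rk}_\epsilon$ for a fixed $\epsilon \leq 8$ is unbounded on the examples $\nu^\alpha$.
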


\begin{proof} 
Indeed, the function $\nu \mapsto {\rm rank}(\nu)$ is clearly a $\mathbf \Pi_1^1$-rank.
Since, by Theorem \ref{Glavna}, this function is unbounded below $\omega_1$,
by the rank method (see \cite{Kech:DST}, page 288) the set $\mathscr E$ is not Borel. 
\end{proof}

Suppose $\nu$ is strictly positive exhaustive submeasure on $\mathcal B$. 
In the standard way we define a metric $\rho$  on $\B$:   $\rho(E,F)=\nu(E\bigtriangleup F)$, for $E,F\in \B$.  
We use it to obtain a metric completion $\bar{\B}$ of $\B$. 
The continuous extension $\bar{\nu}$ of $\nu$ to $\bar{\B}$ is a strictly positive 
continuous submeasure of exhaustivity rank the same as $\nu$. Since, by \cite{Maharam} any
two continuous submeasures on a Maharam algebra $\mathcal M$ are absolutely continuous with respect
to each other, the exhaustivity rank is an algebraic invariant of $\mathcal M$. 
Therefore, from Theorem \ref{Glavna} we have the following corollary. 

\begin{coro}\label{Maharam-algebras}
 There are uncountably many pairwise non isomorphic separable atomless Maharam algebras.
 \qed
 \end{coro}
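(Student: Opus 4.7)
The plan is to combine Theorem \ref{Glavna} with the construction described in the paragraph immediately preceding the corollary, which converts any strictly positive exhaustive submeasure on $\mathcal B$ into a continuous submeasure on a Maharam algebra of the same exhaustivity rank, and observes (via Maharam's absolute continuity theorem) that this rank is an algebraic invariant of the Maharam algebra.

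First I would fix a cofinal sequence $(\alpha_\xi)_{\xi < \omega_1}$ in $\omega_1$ and, invoking Theorem \ref{Glavna}, choose for each $\xi$ an exhaustive submeasure $\nu_\xi$ on $\mathcal B$ with ${\rm rk}(\nu_\xi) \geq \alpha_\xi$. Quotienting $\mathcal B$ by the $\nu_\xi$-null ideal, I may assume $\nu_\xi$ is strictly positive on a countable Boolean algebra; atomlessness of this quotient should follow from the fact that $\nu_\xi(N_u) \geq 8$ for every $u \in \mathcal P$ with $\| \dom(u) \| = 1$ (in particular for every $u$ whose domain is a singleton, since $\| \{n\}\| = 1$), so that the submeasure does not concentrate on any single point of $T$.

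Applying the preceding construction then yields, for each $\xi$, a separable atomless Maharam algebra $\mathcal M_\xi$ with ${\rm rk}(\mathcal M_\xi) = {\rm rk}(\nu_\xi) \geq \alpha_\xi$. Since $(\alpha_\xi)_{\xi < \omega_1}$ is cofinal in $\omega_1$, the set $\{{\rm rk}(\mathcal M_\xi) : \xi < \omega_1\}$ is unbounded in $\omega_1$ and hence uncountable. Because Maharam algebras of distinct exhaustivity ranks are non-isomorphic, the $\mathcal M_\xi$ contain uncountably many pairwise non-isomorphic separable atomless Maharam algebras, which is exactly what the corollary claims.

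The most delicate step is verifying atomlessness of $\mathcal M_\xi$. A safe fallback, in case the direct verification proves awkward, is to pass to the atomless summand of $\mathcal M_\xi$: atomic Maharam algebras are measure algebras and hence have exhaustivity rank at most $\omega^\omega$ by the Fremlin bound recalled in the introduction, so the atomless summand retains the full rank whenever $\alpha_\xi \geq \omega^\omega$, and discarding the countably many remaining $\xi$ still produces uncountably many non-isomorphic examples.
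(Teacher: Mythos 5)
Your overall route is the paper's route: take the submeasures of unbounded exhaustivity rank from Theorem \ref{Glavna}, pass to the quotient by the null ideal and the metric completion to get separable Maharam algebras, and invoke Maharam's absolute-continuity theorem to see that the exhaustivity rank is an isomorphism invariant, so that unboundedly many ranks yield uncountably many non-isomorphic algebras. That main line is correct (and your explicit quotient step, which preserves the rank since disjoint families of $\epsilon$-large elements survive, is something the paper leaves implicit).

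The one step that does not work as you state it is the justification of atomlessness. From $\nu(N_u)\ge 8$ for all $u$ with $\|\dom(u)\|=1$ one cannot conclude that the submeasure "does not concentrate on a point": submeasures are only subadditive, and for instance adding a point mass $\delta_x$ to $\nu$ preserves exhaustivity and this lower bound while creating an atom in the completion. An atom of the completion corresponds exactly to a point $x\in T$ with $\inf_n\nu(N_{x\restriction n})>0$, so what is needed is that $\nu(N_{x\restriction n})\to 0$ for every $x$, and this follows from the thinness property (Definition \ref{thinness-property}, Lemma \ref{limit-thinness}) rather than from the lower bound alone: given $k$, choose a finite $I\subseteq[1,\infty)$ with $\|I\|=M_k$ and $n>\max I$; then $N_{x\restriction n}$ is $(I,\nu)$-thin, since for $m\in I$ and $A=N_v\in\mathcal A_m$ either $A\cap N_{x\restriction n}=\emptyset$ (take $H=A$), or $v=x\restriction m$ and one takes $H=N_{v\cup\{(m,j)\}}$ with $j\ne x(m)$, for which $\pi_A^{-1}(H)=N_{\{(m,j)\}}$ has submeasure $\ge 8>1$ (so your cited fact is an ingredient, but only via thinness); hence $\nu(N_{x\restriction n})\le 2^{-k}$ and the completion is atomless. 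Your fallback via the atomless summand can be made to work, but not "by the Fremlin bound recalled in the introduction", which goes in the opposite direction (non-measure algebras have rank $\ge\omega^{\omega}$); you would instead need the elementary fact that a measure algebra has rank at most $\omega$ (uniform exhaustivity makes each ${\rm rk}_\epsilon$ finite) together with an argument, which you do not give, that the rank of a product is controlled by the natural sum of the ranks of the factors. The direct thinness argument above is simpler and closes the gap.
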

 
\section{Bounding the exhaustivity ranks}

As mentioned in the introduction Fremlin \cite{Fremlin-06204}
showed that the exhaustivity rank of Talagrand's submeasure from \cite{Talagrand} is at most $\omega^{\omega^2}$.
In this section we give bounds on the exhaustivity rank of our submeasures.  
If one wishes, one can then produce an explicit $\omega_1$-sequence of pairwise non isomorphic Maharam algebras. 
Thus, suppose $\nu$ is a submeasure on $\mathcal B$ satisfying the covering property 
and such that $\nu(T)\geq 8$. %Let $(M_k)_k$ be the sequence of integers fixed in \S 3.
Suppose that $\| \cdot \|$ is an admissible norm, $N$ is an integer and $\nu$ 
satisfies the $N$-thinness property relative to $\| \cdot \|$. 
Recall that this means that there is an integer $M_N$ such that
that $\nu(X) \leq 2^{-N}$, for every set $X$ which is $(I,\nu)$-thin,
for some $I$ with $\| I \|=M_N$. 
Let 
$
\mathscr S=\{ F\in [\mathbb N]^{<\omega}: \| F \| < M_N\}
$
and  let $\beta=\rho(\mathscr S)$.  Recall that this means that $\beta$
is the least ordinal for which Player {\rm I} has a winning strategy in the game
$\mathcal G_{\beta}(\mathscr S)$ from Definition \ref{family-game}. 
%Recall that this means that the tree 
%$T_{\mathscr S}$ of strictly increasing sequences whose range is in $\mathscr S$,
%ordered by reverse extension, is well-founded and has rank $\beta+1$.
For any $\epsilon >0$, we give an explicit bound
on the $(2^{-N}+\epsilon)$-exhaustivity rank of $\nu$. 
%In order to do this, we will analyze the proofs of Lemma \ref{finding-thinness} and Lemma \ref{exhaustivity}.

We start by making some definitions. Suppose $m$ is an integer and $A\in \mathcal A_m$. 
If $X\in \mathcal B$ we let $\nu(X | A)$ denote the relative submeasure of $X$ with respect to $A$,
i.e. $\nu (\pi_A^{-1}(X))$. Note that $\nu (X | A)=\nu (X\cap A | A)$. 
Suppose now $n >m$ and $\vec{C}=\{ C_r: m <r \leq n\}$ is a sequence
such that  $C_r\subseteq A$ and $C_r \in \mathcal B_r$, for all $m < r \leq n$. We let
\[
w(\vec{C}| A)=  \sum_{m < r \leq n} \nu ({C_r} | A).
\]

\begin{defi}\label{C-sequences} Suppose $m < n$ and $A\in \mathcal A_m$.
We let $\mathscr C_{m,n}(A)$ denote the collection of all sequences $\vec{C}=\{ C_r : m <r \leq n\}$
such that $C_r\subseteq A$, $C_r \in \mathcal B_r$, for all $m < r \leq n$, and $w(\vec{C}| A)\leq 4$.
We let $\mathscr C_m(A)=\bigcup \{ \mathscr C_{m,n}(A): m < n \}$.
%We let $\mathscr C_{m,n}$ denote $\bigcup \{ \mathscr C_{m,n}(A): A\in \mathcal A_m\}$.
\end{defi}

Suppose now $m < n \leq p$, $A\in \mathcal A_m$, $\vec{C}\in \mathscr C_{m,n}(A)$ and 
$\vec{D}\in \mathscr C_{m,p}(A)$.
We say that $\vec{D}$ is an {\em extension} of $\vec{C}$ if $\vec{D}\restriction (m,n]= \vec{C}$. 
%We say that $\vec{D}$ is a {\em trivial extension} of $\vec{C}$ if $\vec{D}$ is an extension
%of $\vec{C}$ and $\vec{D}(r)=\emptyset$, for all $n < r \leq p$. 
If $\delta >0$ we say that $\vec{D}$ is a $\delta$-{\em proper extension} of $\vec{C}$ if $\vec{D}$ is
an extension of $\vec{C}$ and $w(\vec{D} | A) \geq w(\vec{C}  \vert A) + \delta$.
% If ${\mathcal S}\subseteq {\mathscr C}_{m,p}(A)$
%we let ${\mathcal S} \restriction (m,n]= \{ \vec{D}\restriction (m,n]: \vec{D}\in {\mathcal S}\}$.

\begin{lemma}\label{relative-covering} Let $m$ be an integer and $A\in \mathcal A_m$.
Suppose $E\subseteq A$ and $\nu (E | A) <2$. Let $n>m$ be such that $E\in \mathcal B_n$.
Then there is $\vec{C}\in \mathscr C_{m,n}(A)$ such that $E \subseteq \bigcup \vec{C}$. 
\end{lemma}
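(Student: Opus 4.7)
The plan is to reduce to the $m$-covering property of $\nu$ applied to the symmetrization $E^{*}=\pi_A^{-1}(E)$. This set lies in $\mathcal{B}_n$, does not depend on coordinates below $m$, and satisfies $\nu(E^{*})=\nu(E|A)<2$ by the definition of the relative submeasure. Setting $n^{*}=n(E^{*})\le n$, I would first invoke the $m$-covering property of $\nu$ to obtain an $m$-covering sequence $\{D_r : m<r\le n^{*}\}$ for $E^{*}$.

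The key observation I need is that this sequence may be chosen so that each $D_r$ is itself independent of coordinates below $m$. Although Definition \ref{covering-property} does not record this explicitly, it is visible in the construction carried out in Lemma \ref{level-covers} (and inherited by Lemma \ref{limit-covers}): the covering sets $X(\mathcal{F}'_r)$ there are finite unions of sets of the form $[\pi_{A_m}^{-1}(X\cap A_m)]_r$, and this operator preserves independence from coordinates below $m$ (an atom of $\mathcal{A}_r$ meets $\pi_{A_m}^{-1}(X\cap A_m)$ exactly when its translate to every atom of $\mathcal{A}_m$ does).

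With a symmetric covering sequence in hand, I would set $C_r=D_r\cap A$ for $m<r\le n^{*}$ and $C_r=\emptyset$ for $n^{*}<r\le n$. Then $C_r\subseteq A$ and $C_r\in\mathcal{B}_r$, since $A\in\mathcal{B}_m\subseteq\mathcal{B}_r$. Property $(2)$ of the $m$-covering sequence at $j=n^{*}$ (using $\mathrm{int}_{n^{*}}(E^{*})=E^{*}$) gives $E^{*}\subseteq\bigcup_{m<r\le n^{*}}D_r$; intersecting with $A$ shows $E=E^{*}\cap A\subseteq\bigcup_r C_r$. For the weight bound, symmetry of $D_r$ yields $\pi_A^{-1}(D_r\cap A)=D_r$, so $\nu(C_r|A)=\nu(D_r)$, whence $w(\vec{C}|A)=\sum_r\nu(D_r)\le 4$ by property $(3)$, placing $\vec{C}$ in $\mathscr{C}_{m,n}(A)$.

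The only nontrivial point is the symmetry step: without it, $\nu(\pi_A^{-1}(D_r\cap A))$ could exceed $\nu(D_r)$ by a factor up to $|\mathcal{A}_m|$, which would wreck the weight bound. Everything else reduces to routine bookkeeping with $\pi_A^{-1}$ and the defining clauses of an $m$-covering sequence.
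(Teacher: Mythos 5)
Your proof is correct and takes essentially the same route as the paper's: pass to $\pi_A^{-1}(E)$, which does not depend on coordinates below $m$ and has submeasure $<2$, apply the $m$-covering property, and intersect the resulting covering sequence with $A$ (padding with empty sets up to $n$). The symmetry point you single out --- that the covering sets can be taken independent of the coordinates below $m$, which is what gives $\nu(D_r\cap A\mid A)=\nu(D_r)$ and hence the weight bound $\leq 4$ --- is exactly what the paper's one-line proof leaves implicit, and your appeal to the construction is legitimate: the sequences produced in Lemma \ref{level-covers} (whose proof notes that each $X(\mathcal F'_r)$ depends only on coordinates in $[m,r)$) and inherited in Lemma \ref{limit-covers} have precisely this form.
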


\begin{proof} Let $E'= \pi_A^{-1}(E)$. Then $E'$ does not depend on coordinates $< m$ and $\nu (E') <2$.
By the $m$-covering property, we can fix an $m$-covering sequence $\{ C'_r : m < r \leq n\}$
of $E'$. Let $C_r=C_r'\cap A$, for all $m < r \leq n$.
Then $\vec{C}= \{ C_r : m < r \leq n\}$ is as required. 
\end{proof}

\begin{defi}\label{A-delta-game} Let $\alpha$ be an ordinal, $m$ an integer, $A\in \mathcal A_m$,
and $\delta >0$. The game $\mathcal G(\alpha,A,\delta)$  is played between two players
	${\rm I}$ and ${\rm II}$ as follows.
	\[
	\begin{array}[c]{ccccccccc}
	{\rm I} : & \alpha_0, C_0 \phantom{E_0} & \alpha_1,C_1 \phantom{E_1}& \ \ \cdots \ \ & \alpha_n,C_n \phantom{E_k}& \ \ \cdots \ \ \\
	\hline
	{\rm II} : & \phantom{\alpha_0, C_0} E_0 & \phantom{\alpha_1,C_1} E_1 & \ \ \cdots \ \ & \phantom{\alpha_n,C_n} E_n  & \ \ \cdots \ \ \\
	\end{array}
	\]
	
	\noindent Player ${\rm I}$ plays ordinals $\leq \alpha$ such that $\alpha_{n} \leq \alpha_{n-1}$, 
	and clopen sets $C_n\subseteq A$ such that $\nu (C_n | A)\leq 4$. 
	Player ${\rm II}$ plays clopen sets $E_n\subseteq A$. Player {\rm I} is required
	to play $\alpha_{n+1} < \alpha_n$ if	$\nu (\bigcup_{i <n} E_i  | A) < 2$
	and, either $\nu (\bigcup_{i \leq n}E_i | A)\geq 2$ or $\nu (E_n\setminus C_n |A)\geq \delta$. 
	In other case Player {\rm I} is allowed to play $\alpha_{n+1}=\alpha_n$. 
	Player {\rm I} wins if he can keep playing indefinitely by following these rules. 
\end{defi}

\begin{lemma}\label{A-delta-winning} Let $m$ be an integer, $A\in \mathcal A_m$ and $\delta>0$.
Let $k= \lceil 4/\delta\rceil$. Then Player {\rm I} has a winning strategy in $\mathcal G(\omega^{k+1},A,\delta)$.
\end{lemma}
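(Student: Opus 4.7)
The plan is to argue by induction on $k = \lceil 4/\delta\rceil$, building a strategy for Player {\rm I} around the covering sequences supplied by Lemma \ref{relative-covering}. At each stage $n$, as long as $\nu(F_{n-1} | A) < 2$, where $F_{n-1} = \bigcup_{i<n} E_i$, Player {\rm I} will maintain a covering sequence $\vec{C}^{(n)} \in \mathscr{C}_m(A)$ of $F_{n-1}$ of weight at most $4$, and play $C_n = \bigcup \vec{C}^{(n)}$; the constraint $\nu(C_n | A)\le 4$ is then automatic. Once $\nu(F_{n-1} | A) \ge 2$, the rules impose no requirement to strictly decrease the ordinal, so Player {\rm I} can play indefinitely. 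All the work is therefore confined to this ``low phase''.

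During the low phase Player {\rm I} is forced to drop the ordinal only in two situations: a \emph{transition}, in which $\nu(F_n | A)$ jumps from $<2$ to $\ge 2$ (happening at most once), and an \emph{escape}, in which $\nu(E_n \setminus C_n | A)\ge\delta$. The bound on the number of escapes should follow from the weight bound on covering sequences: I would arrange the strategy so that after each escape the updated covering sequence $\vec{C}^{(n+1)}$ is a $\delta$-proper extension of $\vec{C}^{(n)}$; since every covering sequence has weight at most $4$, after $k$ escapes no further $\delta$-proper extension is possible, forcing Player {\rm II} into a transition. The construction of the extension is the key technical step: one would apply Lemma \ref{relative-covering} to the clopen set $F_n \setminus C_n$, treated as depending only on coordinates beyond those used by $\vec{C}^{(n)}$, which is possible because $\nu(F_n \setminus C_n | A) < 2$.

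With the extension construction in hand, the strategy is organized hierarchically. Player {\rm I} starts with ordinal $\omega^{k+1} = \omega^k \cdot \omega$; the coefficient of $\omega^k$ tracks the number of escape events still allowed by the weight bound. Between escapes, Player {\rm I} simulates the winning strategy from the inductive hypothesis in an auxiliary game with one fewer admissible escape, which by induction has rank at most $\omega^k$. The final transition drop, if it occurs, consumes one additional decrement below $\omega^k$ at the end of the low phase.

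The hard part will be constructing the $\delta$-proper extension of the covering sequence at each escape: because $\nu$ is only subadditive, Lemma \ref{relative-covering} alone yields a covering sequence that need not extend the prior one, so one must introduce the new entries at sufficiently high coordinate levels and track the weight increment precisely. Once this is in place, the standard ordinal arithmetic assembling the $k+1$ hierarchical levels finishes the proof.
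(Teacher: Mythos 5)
There is a genuine gap, and it sits exactly where you flag ``the hard part''. Your strategy commits to a \emph{single} covering sequence $\vec{C}^{(n)}\in\mathscr C_m(A)$ and claims that after an escape it can be updated to a $\delta$-proper \emph{extension} still covering $F_n$. Nothing guarantees such an extension exists: Lemma \ref{relative-covering} applied to $F_n$ produces some covering sequence of weight $\leq 4$, but it need not extend $\vec{C}^{(n)}$, and it may well happen that no extension of $\vec{C}^{(n)}$ of total weight $\leq 4$ covers $F_n$. Your proposed repair --- apply Lemma \ref{relative-covering} to $F_n\setminus C_n$ and graft the new entries onto $\vec{C}^{(n)}$ at high coordinates --- fails on the weight budget: the fresh covering of $F_n\setminus C_n$ may itself have weight up to $4$, so the concatenation can have weight up to $8$ and falls outside $\mathscr C_m(A)$; there is no way to secure simultaneously the extension property and the bound $w(\cdot\,|A)\leq 4$ that your ``at most $k$ escapes'' potential argument needs. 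Indeed, if a single chain with at most $k$ forced decreases were available, an ordinal of size roughly $k+2$ would suffice, since between escapes Player {\rm I} may repeat his move; the fact that the lemma needs $\omega^{k+1}$ reflects precisely that the single-chain picture is wrong. Relatedly, your inductive scheme (``an auxiliary game with one fewer admissible escape'') is never defined, and the induction on $k=\lceil 4/\delta\rceil$ does not connect to the game with the same $\delta$, so that part of the argument cannot be carried out as stated.

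The paper's proof avoids the commitment to one chain. Player {\rm I} maintains a \emph{finite family} $\mathcal D_n\subseteq\mathscr C_m(A)$ with the invariant that every $\vec{C}\in\mathscr C_{m,p(n)}(A)$ covering $\bigcup_{i<n}E_i$ extends some member of $\mathcal D_n$; he plays $C_n=\bigcup\vec{C}_n$ for one chosen $\vec{C}_n\in\mathcal D_n$, and as ordinal the natural sum $\bigoplus_{\vec{C}\in\mathcal D_n}\omega^{k(\vec{C})}$ (plus $1$), where $k(\vec{C})$ is the least $l$ with $(l+1)\delta>4-w(\vec{C}\,|\,A)$. Your correct observation that any covering of the enlarged union which extends $\vec{C}_n$ must be a $\delta$-proper extension is then used not to update a single sequence, but to replace $\vec{C}_n$ inside $\mathcal D_n$ by \emph{all} of its $\delta$-proper extensions, each of strictly smaller $k$-value; this strictly decreases the natural sum while preserving the invariant, and since every summand is $\leq\omega^k$, the whole potential stays below $\omega^{k+1}$. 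Your handling of the two easy regimes (playing unions of covering sequences so that $\nu(C_n|A)\leq 4$, and the single transition to $\nu(\bigcup_{i\leq n}E_i\,|\,A)\geq 2$ after which no decrease is required) does match the paper, but the core mechanism --- hedging over the family $\mathcal D_n$ with the $\omega^{k(\cdot)}$ potential --- is missing and is not recoverable from the single-sequence plan.
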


\begin{proof} For $\vec{C}\in \mathscr C_m (A)$, let $k(\vec{C})$ be the least integer $l$ such that
$(l+1)\cdot \delta > 4- w(\vec{C} | A)$. % and let $\alpha (\vec{C})= \omega^{l(\vec{C})}$. 
To begin, Player {\rm I} plays $(\omega^{k+1},\emptyset)$. 
As long as $\nu(\bigcup_{i<n}E_i | A) <2$, Player {\rm I} plays ordinals $\alpha_n >0$. 
At stage $n>0$, if $\nu( \bigcup_{i <n} E_n | A) <2$, by Lemma \ref{relative-covering} 
there is $\vec{C}\in \mathscr C_m(A)$ such that $\bigcup_{i<n}E_n\subseteq \bigcup \vec{C}$. 
On the side Player {\rm I} keeps an integer $p(n)$ such that $E_i\in \mathcal B_{p(n)}$, 
for all $i<n$, and  a finite family ${\mathcal D}_n \subseteq \mathscr C_m(A)$
such that  every  $\vec{C}\in \mathscr C_{m,p(n)}(A)$ such that $\bigcup_{i <n}E_i \subseteq \bigcup \vec{C}$
extends a member of $\mathcal D_n$. 
Given these objects,  let us define $\alpha'_n$ to be the natural sum of
the ordinals $\omega^{k(\vec{C})}$, for $\vec{C}\in \mathcal D_n$, and let $\alpha_n=\alpha'_n+1$.
Player {\rm I} picks some $\vec{C}_n\in \mathcal D_n$,
sets $C_n=\bigcup \vec{C}_n$  and plays the pair $(\alpha_n, C_n)$. Suppose Player {\rm II} responds
by playing some $E_n$. If $\nu (\bigcup_{i \leq n}E_i| A) < 2$ and $\nu (E_n\setminus C_n |A) < \delta$,
Player {\rm I} simply repeats his previous move, i.e. he sets $(\alpha_{n+1},C_{n+1})=(\alpha_n,C_n)$.
He also sets $\mathcal D_{n+1}= \mathcal D_n$. 
If $\nu (\bigcup_{i \leq n} E_n| A)\geq 2$, Player {\rm I} sets $\alpha_{n+1}=0$ and $C_{n+1}=\emptyset$.
After that there are no requirements for him, so he keeps repeating this move indefinitely. 
Suppose now that $\nu (E_n\setminus C_n | A) \geq \delta$. Note that  any $\vec{D}\in \mathscr C_m(A)$
extending $\vec{C}_n$ and such that $E_n \subseteq \bigcup \vec{D}$ will be a $\delta$-proper extension
of $\vec{C}_n$, hence we'll have $k(\vec{D}) < k(\vec{C}_n)$. 
Let $p(n+1)$ be the least integer $p \geq p(n)$ such that $E_i\in \mathcal B_p$, for all $i \leq n$.
In order to define $\mathcal D_{n+1}$, Player {\rm I}
removes $\vec{C}_n$ from $\mathcal D_n$ and replaces it by all its $\delta$-proper extensions in 
$\mathscr C_{m,p(n+1)}(A)$. 
If $k(\vec{C}_n)=0$ there are no such extensions, so Player {\rm I} simply removes $\vec{C}_n$ from
$\mathcal D_n$. 
Also, observe that in the computation of $\alpha'_{n+1}$, we replaced $\omega^{k(\vec{C}_n)}$ by finitely
many ordinals of the form $\omega^l$, for $l< k(\vec{C}_n)$. It follows that $\alpha'_{n+1} < \alpha'_n$. 
Since $\alpha_{n+1}=\alpha'_{n+1}+1$, we also have that $\alpha_{n+1}< \alpha_n$
and, in addition, $\alpha_{n+1}\geq 1$.
Clearly, Player {\rm I} can play indefinitely by following this strategy. 
\end{proof}

\begin{defi}\label{m-delta-game} Let $\alpha$ be an ordinal, $m$ an integer, and $\delta >0$. 
The game $\mathcal H (\alpha,m,\delta)$  is played between two players
	${\rm I}$ and ${\rm II}$ as follows.
	\[
	\begin{array}[c]{ccccccccc}
	{\rm I} : & \alpha_0, B_0 \phantom{E_0} & \alpha_1,B_1 \phantom{E_1}& \ \ \cdots \ \ & \alpha_n,B_n \phantom{E_n}& \ \ \cdots \ \ \\
	\hline
	{\rm II} : & \phantom{\alpha_0, B_0} E_0 & \phantom{\alpha_1,B_1} E_1 & \ \ \cdots \ \ & \phantom{\alpha_n,B_n} E_n  & \ \ \cdots \ \ \\
	\end{array}
	\]
	
	\noindent Player ${\rm I}$ plays a strictly decreasing sequence of ordinals $< \alpha$ and sets $B_n\in \mathcal B$
	such that each $B_n$ is $(m,q_n)$-thin, for some $q_n >m$. At stage $n$, Player {\rm II} is required to 
	play some $E_n\in \mathcal B$ that is disjoint from the $E_i$, for $i <n$, and 
	 such that $\nu (E_n \setminus B_n)\geq \delta$. 
	The first player who cannot play following these  rules loses. 
\end{defi}

\begin{lemma}\label{m-delta-winning} Suppose $m$ is an integer and $\delta >0$. Let
 $k= \lceil 4\cdot | \mathcal A_m|/ \delta\rceil$. 
Then Player {\rm I} has a winning strategy in  $\mathcal H (\omega^{k+2}, m,\delta)$.
\end{lemma}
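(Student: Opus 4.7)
The plan is to reduce the multi-atom game to $|\mathcal A_m|$ parallel copies of the single-atom game of Lemma~\ref{A-delta-winning}. Setting $\delta'=\delta/|\mathcal A_m|$ we have $\lceil 4/\delta'\rceil=k$, so Lemma~\ref{A-delta-winning} supplies Player~I with a winning strategy $\sigma_A$ in $\mathcal G(\omega^{k+1},A,\delta')$ for each $A\in\mathcal A_m$. Player~I will simulate all these sub-games simultaneously, feeding Player~II's main-game move $E_n$ atom-wise as $E_n\cap A$ into each $\mathcal G_A$.

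Concretely, at round $n$, if $(\xi_n^A,C_n(A))$ denotes the response of $\sigma_A$ in $\mathcal G_A$, Player~I plays
\[
B_n=\bigcup_{A\in\mathcal A_m}C_n(A),\qquad \alpha_n=\bigoplus_{A\in\mathcal A_m}\xi_n^A,
\]
with $\oplus$ the natural sum. Thinness of $B_n$ is immediate: since $\nu(C_n(A)\mid A)\leq 4$ and $\nu(T)\geq 8$, subadditivity gives $\nu(A\setminus C_n(A)\mid A)\geq 4>1$, so $H(A)=A\setminus C_n(A)$ witnesses $(m,q_n)$-thinness once $q_n$ is large enough to contain all the $C_n(A)$. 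Each $\xi_n^A$ is bounded by $\omega^{k+1}$, so the natural sum stays below $\omega^{k+1}\cdot|\mathcal A_m|<\omega^{k+2}$.

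The key verification is that any legitimate move $E_n$ ($\nu(E_n\setminus B_n)\geq\delta$) forces $\alpha_n<\alpha_{n-1}$. Since $E_n\setminus B_n=\bigcup_A (E_n\cap A)\setminus C_n(A)$ is a disjoint union across atoms, subadditivity locates some $A^*$ with $\nu((E_n\cap A^*)\setminus C_n(A^*))\geq\delta'$; combining this with $(E_n\cap A^*)\setminus C_n(A^*)\subseteq\pi_{A^*}^{-1}((E_n\cap A^*)\setminus C_n(A^*))$ and monotonicity of $\nu$ yields the relativised bound $\nu((E_n\cap A^*)\setminus C_n(A^*)\mid A^*)\geq\delta'$, which triggers a strict ordinal decrement of $\xi^{A^*}$ under the rules of Definition~\ref{A-delta-game}. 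The other components can only remain equal or decrease, so the natural sum strictly drops.

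The main obstacle, and the reason the bound carries an extra factor of $\omega$, is the case where the triggering atom $A^*$ has already entered the ``post-stop'' regime $\nu(\bigcup_{i<n}E_i\cap A^*\mid A^*)\geq 2$, in which Lemma~\ref{A-delta-winning}'s strategy no longer decrements. I would handle this by augmenting each $\sigma_A$ with a secondary counter that activates the first time $A$'s relative union crosses $2$: at that moment switch $C_n(A)$ to a covering sequence of $\pi_A^{-1}(\bigcup_{i\leq n}E_i\cap A)$ produced by the covering property of $\nu$, and decrement $\omega^{k(\vec C)}$-style whenever a Player~II escape forces a $\delta'$-proper extension of the current covering, in direct analogy with the proof of Lemma~\ref{A-delta-winning}. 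This contributes at most another $\omega^{k+1}$-bounded counter per atom, so the overall natural sum remains strictly below $\omega^{k+2}$, and the bookkeeping guarantees that every legitimate main-game move by Player~II produces a strict decrement in at least one component.
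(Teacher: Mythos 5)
Your reduction to $|\mathcal A_m|$ parallel copies of $\mathcal G(\omega^{k+1},A,\delta')$ with $\delta'=\delta/|\mathcal A_m|$, combining the sub-game ordinals by natural sums, is exactly the paper's approach, and your verification in the ``normal'' regime (locating a triggering atom by subadditivity and forcing a decrement of $\sigma_{A^*}$) is fine. The gap is in your treatment of the post-stop regime, which is precisely the crux of the lemma. Your fix asks, at the first moment when $\nu(\bigcup_{i\le n}E_i \mid A)\ge 2$, for a covering sequence of $\pi_A^{-1}(\bigcup_{i\le n}E_i\cap A)$ ``produced by the covering property''; but the covering property (Definition \ref{covering-property}, via Lemma \ref{relative-covering}) applies only to sets of submeasure $<2$, so it gives you nothing at or after the crossing. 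Worse, the mechanism you want to run afterwards --- each escape forces a $\delta'$-proper extension of the current covering, and the weight bound $4$ limits the number of such extensions --- presupposes that weight-$\le 4$ coverings of the growing union keep existing; this is not guaranteed for relative submeasure in $[2,4]$ and is outright impossible once $\nu(\bigcup_i E_i\mid A)>4$, since the union of a covering has submeasure at most its total weight. So with your choice $B_n=\bigcup_A C_n(A)$, Player II can concentrate his escapes inside a post-stop atom and your bookkeeping produces no strict decrement: the secondary counter has no well-defined value to decrease.

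The correct repair is simpler and is what the paper does: on a post-stop atom $A$, i.e.\ when $\nu(\bigcup_{i<n}E_i\mid A)\ge 2$, take the thinness witness to be $H_n(A)=\bigcup_{i<n}E_i\cap A$ itself --- its relative submeasure is $\ge 2>1$, so no covering is needed --- and put $D_n(A)=A\setminus H_n(A)$ into $B_n$ (so $B_n=\bigcup_A D_n(A)$, which coincides with your $\bigcup_A C_n(A)$ only on the normal atoms). Then for any new $E_n$ disjoint from the earlier $E_i$ one has $(E_n\setminus B_n)\cap A\subseteq E_n\cap\bigcup_{i<n}E_i=\emptyset$ on every post-stop atom, so the atom $A^*$ with $\nu((E_n\setminus B_n)\cap A^*)\ge\delta'$ is automatically in the normal regime and the rules of Definition \ref{A-delta-game} force $\sigma_{A^*}$ to decrement. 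No secondary counter is needed, and the first move $\omega^{k+1}\cdot|\mathcal A_m|<\omega^{k+2}$ already gives the stated bound.
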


\begin{proof} Let $\delta'= \delta/| \mathcal A_m|$. By Lemma \ref{A-delta-winning}, we can 
fix a winning strategy $\sigma_A$ for Player {\rm I} in $\mathcal G(\omega^{k+1},A,\delta')$, for all $A\in \mathcal A_m$.
We describe a winning strategy $\sigma$ for Player {\rm I}
in $\mathcal H(\omega^{k+2},m,\delta)$.  We think of playing all the games 
$\mathcal G (\omega^{k+1},A,\delta')$  in parallel. 
In each of these games Player {\rm I} follows his winning strategy $\sigma_A$. 
If Player {\rm II} plays $E_n$ in $\mathcal H (\omega^{k+2},m,\delta)$ we consider
that he plays $E_n\cap A$ in the game $\mathcal G (\omega^{k+1},A,\delta')$. % for all $A\in \mathcal A_m$.
At stage $n$, let $(\alpha_n(A),C_n(A))$ be the $n$-th move of $\sigma_A$ in the game
$\mathcal G (\omega^{k+1},A,\delta')$.
For each $A\in \mathcal A_m$, let 
\[
H_n(A) = 
\begin{cases}
\bigcup_{i< n}E_i \cap A, & \mbox{ if } \nu(\bigcup_{i <n}E_i | A) \geq 2, \\
A \setminus C_n(A), &  \mbox { otherwise}.
\end{cases}
\]
%Note that $\nu (H_n(A) | A) \geq 2$, for all $A\in \mathcal A_m$.
Let $q_n$ be the least integer $q >m$ such that $H_n(A)\in \mathcal B_q$, for all $A\in \mathcal A_m$. 
Note that $\nu (H_n(A) | A) \geq 2$, for all $A\in \mathcal A_m$.
 Therefore, if we let $D_n(A)=A\setminus H_n(A)$, for all $A\in \mathcal A_m$, the set
\[
B_n= \bigcup \{ D_n(A): A\in \mathcal A_m\},
\] 
is $(m, q_n, \nu)$-thin. Let $\alpha_n$ be the natural sum of the $\alpha_n(A)$, for $A\in \mathcal A_m$.
The strategy $\sigma$ then plays $(\alpha_n,B_n)$.  
Suppose that Player {\rm II} responds by playing some $E_n$ disjoint from the $E_i$, for $i<n$,
and such that $\nu (E_n\setminus B_n)\geq \delta$. 
Then there must be some $A\in \mathcal A_m$ such that $\nu ((E_n \setminus B_n)\cap A)\geq \delta'$.
In particular, $\nu (E_n \cap H_n(A) | A)\geq \delta'$. 
If $H_n(A)= \bigcup_{i<n}E_i \cap A$, this is not possible since $E_n$ is disjoint from the $E_i$, for $i<n$.
Thus, it must be the case that $\nu(\bigcup_{i <n}E_i | A) <2$ and $\nu(E_n\setminus C_n(A)| A)\geq \delta'$.
This means that in the next move $\sigma_A$ must play some pair $(\alpha_{n+1}(A),C_{n+1}(A))$,
such that $\alpha_{n+1}(A)< \alpha_n(A)$. Since $\alpha_{n+1}(A')\leq \alpha_n (A')$, 
for all other $A'\in \mathcal A_m$, this means that $\alpha_{n+1} < \alpha_n$.
Therefore, by doing this, Player {\rm I} follows the rules in $\mathcal H (\omega^{k+2},m,\delta)$. 
Finally, let us note that, for all $A\in \mathcal A_m$,  the first move of $\sigma_A$ is $(\omega^{k+1},\emptyset)$.
Hence, the first move of  $\sigma$ is $\omega^{k+1}\cdot | \mathcal A_m | < \omega^{k+2}$.
Therefore $\sigma$ is a winning strategy for Player {\rm I} in $\mathcal H (\omega^{k+2},m,\delta)$,
as required. 

\end{proof}

%Recall that we have fixed an admissible norm $\| \cdot \|$
%and a submeasure $\nu$ which satisfies the covering property. We also have an integer $N$  and $\epsilon >0$.
%such that $\nu$ satisfies the $M_n$-thinness property relative to $\| \cdot \|$. 
%Also, we have defined
%$
%\mathscr S=\{ F\in [\mathbb N]^{<\omega}: \| F \| \leq M_N\}
%$
%and supposed $\rho(\mathscr S)=\beta$.  Recall that this means that $\beta$
%is the least ordinal such that Player {\rm I} has a winning strategy in the game
%$\mathcal G_{\beta}(\mathscr S)$ from \S 2. 
%Given those parameters, for any countable ordinal $\alpha$, 
%we  consider the following $(2^{-N}+\epsilon)$-exhaustivity game $\mathcal E({\alpha})$.
We now introduced another game that will be used to bound the exhaustivity rank of our submeasure $\nu$.

\begin{defi}\label{exhaustivity-game} Let $\xi$ be an ordinal. 
The game $\mathcal E(\xi)$  is played between two players
	${\rm I}$ and ${\rm II}$ as follows.
	\[
	\begin{array}[c]{ccccccccc}
	{\rm I} : & \xi_0  \phantom{E_0} & \xi_1 \phantom{E_1}& \ \ \cdots \ \ & \xi_n \phantom{E_n}& \ \ \cdots \ \ \\
	\hline
	{\rm II} : & \phantom{\xi_0, } E_0 & \phantom{\xi_1} E_1 & \ \ \cdots \ \ & \phantom{\xi_n,} E_n  & \ \ \cdots \ \ \\
	\end{array}
	\]
	
	\noindent Player ${\rm I}$ plays a strictly decreasing sequence of ordinals $\leq \xi$ and 
	Player {\rm II} plays pairwise disjoint sets $E_n \in \mathcal B$ such that $\nu (E_n) \geq 2^{-N}+\epsilon$.  
	The first player who cannot play by following these rules loses. 
\end{defi}

Recall that we have assumed that $\| \cdot \|$ is an admissible norm and 
$\nu$  satisfies the $N$-thinness property relative to $\| \cdot \|$. 
We have defined 
$
\mathscr S=\{ F\in [\mathbb N]^{<\omega}: \| F \| < M_N\}
$
and let $\beta=\rho(\mathscr S)$.

\begin{lemma}\label{exhaustivity-winning} 
Player {\rm I} has a winning strategy in the game $\mathcal E({\omega^{\omega \cdot (\beta +1)}})$.
\end{lemma}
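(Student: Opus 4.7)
The plan is to build Player~{\rm I}'s winning strategy in $\mathcal E(\omega^{\omega\cdot(\beta+1)})$ by composing, in parallel, a winning strategy $\sigma_S$ for Player~{\rm I} in $\mathcal G_\beta(\mathscr S)$ with a sequence of winning strategies furnished by Lemma~\ref{m-delta-winning}. Fix $\delta\in(0,\epsilon)$. For each integer $m$ set $k(m)=\lceil 4|\mathcal A_m|/\delta\rceil$ and, by Lemma~\ref{m-delta-winning}, let $\sigma_m$ be a winning strategy for Player~{\rm I} in $\mathcal H(\omega^{k(m)+2},m,\delta)$. Each $\sigma_m$ uses ordinals strictly below $\omega^\omega$ because $k(m)+2<\omega$.

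The strategy is organized into phases indexed by the moves of the simulated $\mathcal G_\beta(\mathscr S)$. At any point Player~{\rm I} maintains (i) a Schreier play $m_0<\cdots<m_j$ with $\{m_0,\ldots,m_j\}\in\mathscr S$ and $\sigma_S$-responses $\beta>\beta_0>\cdots>\beta_j$; (ii) for each $i<j$ a set $B_i\in\mathcal B_{m_{i+1}}$ that is $(m_i,m_{i+1},\nu)$-thin, produced at the end of the $i$-th phase; (iii) a running $\mathcal H(\omega^{k(m_j)+2},m_j,\delta)$ sub-game position in which $\sigma_{m_j}$'s most recent move is $(\alpha,B^{\mathrm{cur}})$. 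When Player~{\rm II} plays $E$ in $\mathcal E$, Player~{\rm I} tests $\nu(E\setminus B^{\mathrm{cur}})$: if it is at least $\delta$ then $E$ is a legitimate reply in the current sub-game, and $\sigma_{m_j}$ outputs a new move $(\alpha',B^{\mathrm{cur}'})$ with $\alpha'<\alpha$; otherwise the sub-game has just ended, so Player~{\rm I} saves $B_j:=B^{\mathrm{cur}}$, picks $m_{j+1}$ to be the least $q>m_j$ with $B_j\in\mathcal B_q$, feeds $m_{j+1}$ to $\sigma_S$ (producing $\beta_{j+1}<\beta_j$), and opens a fresh $\mathcal H(\omega^{k(m_{j+1})+2},m_{j+1},\delta)$ sub-game. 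The ordinal Player~{\rm I} plays in $\mathcal E$ is chosen from the interval $[(\omega^\omega)^{\beta_j},(\omega^\omega)^{\beta_j+1})$ during phase $j$, encoded so that it strictly decreases both within a phase (via $\alpha'<\alpha$) and across phase transitions (via $(\omega^\omega)^{\beta_{j+1}+1}<(\omega^\omega)^{\beta_j}$ whenever $\beta_{j+1}<\beta_j$). The global cap is $(\omega^\omega)^{\beta+1}=\omega^{\omega\cdot(\beta+1)}$.

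To see that this is a winning strategy, observe that $\sigma_S$ is winning in $\mathcal G_\beta(\mathscr S)$, so along any play the simulated Player~{\rm II} becomes stuck after finitely many phases: every $q>m_J$ satisfies $\|\{m_0,\ldots,m_J,q\}\|\ge M_N$. Running one more sub-game produces $B_J$ and $m_{J+1}=q$, giving $I=\{m_0,\ldots,m_{J+1}\}$ with $\|I\|\ge M_N$. Exactly as in the proof of Proposition~\ref{exhaustivity}, $B^*:=\bigcap_{i\le J}B_i$ is $(I,\nu)$-thin, so the $N$-thinness property of $\nu$ yields $\nu(B^*)\le 2^{-N}$; coupled with each phase-ending move's capture $\nu(E\cap B^{(j)})\ge 2^{-N}+\epsilon-\delta>2^{-N}$, this forces Player~{\rm II} to exhaust her playable sets before Player~{\rm I}'s ordinal can hit zero.

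The main obstacle I foresee is making the ordinal encoding uniformly strict---in particular at limit $\beta_j$, where the inequalities governing phase transitions are tightest---and ensuring that the residual behaviour of Player~{\rm II} after $\sigma_S$ has terminated (where she might still try to play disjoint sets escaping $B^*$ by $\epsilon$) is absorbed inside the remaining ordinal budget. If needed this can be handled by recursing the construction by induction on $\beta$, absorbing the residual play into one extra factor of $\omega^\omega$ per level, which matches $(\omega^\omega)^{\beta+1}$ exactly.
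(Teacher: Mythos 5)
Your overall architecture is the same as the paper's (compose a winning strategy $\tau$ for Player {\rm I} in $\mathcal G_\beta(\mathscr S)$ with the strategies supplied by Lemma~\ref{m-delta-winning}, and encode the pair of ordinals so that the bound $\omega^{\omega\cdot(\beta+1)}$ comes out), but as written the strategy has a genuine gap, in fact two linked ones. First, you test each move $E_n$ only against the \emph{current} thin set $B^{\mathrm{cur}}$, with one fixed tolerance $\delta$. The contradiction that makes the whole scheme work needs a \emph{single} move $E_n$ to nearly miss \emph{all} the stored sets $B_0,\ldots,B_{l-1}$ simultaneously, with tolerances that sum to less than $\epsilon$ (the paper takes $\epsilon_i=\epsilon/2^{i+1}$): only then does $\nu(E_n)\le\nu\bigl(\bigcap_{i<l}B_i\bigr)+\sum_{i<l}\epsilon_i<2^{-N}+\epsilon$ contradict the rules once $\|F\|\ge M_N$, because the $N$-thinness property controls only the \emph{intersection} of the $B_i$, not any single $B_i$. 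Knowing $\nu(E_n\setminus B_J)<\delta$ for the current $B_J$ gives no bound on $\nu(E_n)$, so your ``capture'' estimate $\nu(E_n\cap B^{(j)})\ge 2^{-N}+\epsilon-\delta$ proves nothing, and moreover with a uniform $\delta$ the slacks need not sum below $\epsilon$ even if you did test all levels.

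Second, because a legal $E_n$ may escape an \emph{earlier} $B_i$ by a lot while nearly missing the current one, the strategy must be able to backtrack: respond to $E_n$ in the least subgame $\mathcal H(\omega^\omega,m_j,\epsilon_j)$ whose thin set it escapes by $\ge\epsilon_j$, erase all later subgames, and record \emph{all} live subgame ordinals in the played ordinal, which is why the paper plays the natural sum $\omega^{\omega\cdot\gamma_0}\cdot\alpha_0\oplus\cdots\oplus\omega^{\omega\cdot\gamma_{l-1}}\cdot\alpha_{l-1}\oplus\omega^{\omega\cdot\gamma_l+s}$ rather than an ordinal depending only on the current phase. Your interval encoding $[(\omega^\omega)^{\beta_j},(\omega^\omega)^{\beta_j+1})$ forgets the earlier $\alpha_i$ and has no decreasing response to such a move; in particular, once the simulated Schreier play terminates (no legal $m_{J+1}$), your strategy has no defined answer to a further disjoint $E$ with $\nu(E\setminus B^{\mathrm{cur}})<\delta$, and the claim that Player {\rm II} is then ``forced to exhaust her playable sets'' is exactly the exhaustivity-rank bound being proved, so it cannot be invoked. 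The closing suggestion to recurse on $\beta$ and absorb the residual play into one extra factor of $\omega^\omega$ is a gesture rather than an argument: handling that residual play correctly is precisely what the all-levels test with summable $\epsilon_i$, the backtracking case, and the natural-sum bookkeeping accomplish in the paper's proof.
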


\begin{proof} Let $\tau$ be a winning strategy for Player {\rm I} in the game $\mathcal G_\beta (\mathscr S)$ 
from Definition \ref{family-game}.
For every $m$ and $\delta>0$, fix a winning strategy $\sigma_{m,\delta}$ for Player {\rm I} in
$\mathcal H (\omega^\omega, m,\delta)$.
We combine those strategies into a winning strategy for Player {\rm I} in $\mathcal E({\omega^{\omega \cdot (\beta +1)}})$. 
Let us write $\epsilon_i$ for $\epsilon /2^{i+1}$. 
To avoid excessive notation, let us introduce some dynamic variables.
First, $l$ will denote an integer, $F$ a set of integers of size $l+1$, and $\{ m_0,\ldots,m_l\}$
will denote the increasing enumeration of $F$. Also, 
$\vec{\gamma}$ will denote a decreasing sequence $(\gamma_0,\ldots, \gamma_{l})$
of ordinals $\leq \beta$ of length $l+1$. 
%We will write $\{ m_0,\ldots, m_l\}$ for the increasing enumeration  of $F$.
We will have that $(\gamma_0,m_0,\ldots, \gamma_{l-1},m _{l-1},\gamma_l)$ is a position
in $\mathcal G_{\beta}(\mathscr S)$ in which Player {\rm I} uses his strategy $\tau$. 
In particular, we will have that $\{ m_0,\ldots, m_{l-1}\}\in \mathscr S$, but $F$ itself may not be in $\mathscr S$.
For each $i<l$ we will also fix a variable $\pi_i$ denoting a certain 
position in the game $\mathcal H (\omega^\omega, m_i,\epsilon_i)$,
in which Player {\rm I} uses his winning strategy $\sigma_{m_i,\epsilon_i}$ and
Player {\rm II} plays some of the $E_j$ from the game $\mathcal E(\omega^{\omega \cdot (\beta+1)})$.
We denote the last move of Player {\rm I} in $\pi_i$ by  $(\alpha_i, B_i)$. 
We will also have that $B_i \in \mathcal B_{m_{i+1}}$ and is $(m_i,m_{i+1},\nu)$-thin.
Given the value of all these variables at stage $n$ we will compute a certain ordinal $\xi_n$
which will be the move of Player {\rm I} at that stage. 
Depending on the next move of Player {\rm II} we will reset these variables for the next
stage of the game. 
%After each round these variables will be reset in a way to be specified later. 

To begin, set $l=1$, $\gamma_0=\beta$, $m_0=0$. Let $\gamma_1$ be the response of $\tau$
if Player {\rm II} plays $m_0$ as his first move in the game $\mathcal G_{\beta}(\mathscr S)$.  
Set $\vec{\gamma}$ to be $(\gamma_0,\gamma_1)$.
Set $\pi_0$ to be the position in $\mathcal H (\omega^\omega, 0,\epsilon_0)$
after the first move of Player {\rm I} given by the strategy $\sigma_{0,\epsilon_0}$. 
Set  $m_1$ to be the least integer $q$ such that $B_0\in \mathcal B_q$. Set $F$ to be $\{ m_0,m_1\}$. 

Now, suppose we are at some stage $n$ of the game $\mathcal E(\omega^{\omega \cdot (\beta +1)})$.
Given the current values of the above variables, let  $s$ be such that the first move of 
$\sigma_{m_l,\epsilon_l}$ is $<\omega^s$. As his $n$-th move in $\mathcal E(\omega^{\omega \cdot (\beta +1)})$
Player {\rm I}  plays $\xi_n$ equal to: 
\begin{equation}
\omega^{\omega \cdot \gamma_0}\cdot \alpha_0 \oplus \omega^{\omega \cdot \gamma_1}\cdot \alpha_1 
\oplus \ldots %\oplus \omega^{\omega \cdot \gamma_{l-2}}\cdot \alpha_{l-2}
\oplus \omega^{\omega \cdot \gamma_{l-1}}\cdot \alpha_{l-1}
\oplus \omega^{\omega \cdot \gamma_l +s}.
\label{move}
\end{equation}
%where $s$ is such that the first move of $\sigma_{m_l,\epsilon_l}$ is $<\omega^s$.

Now, suppose Player {\rm II} responds by playing some $E_n$ disjoint from the $E_i$, for $i<n$,
and such that $\nu (E_n)\geq 2^{-N} +\epsilon$. Let us describe how the above variables are reset. 
Consider the current values of the $B_i$, for $i<l$. 
\medskip

\noindent{\bf Case 1.} Suppose first that $\nu (E_n\setminus B_i) < \epsilon_i$, for all $i$.
Note that the set $B=\bigcap \{ B_i :i <l\}$ is $(F,\nu)$-thin. If $F\notin \mathscr S$ we have that 
$\| F \| = M_N$ and, hence, by the $N$-thinness property of $\nu$, we conclude that $\nu(B)\leq 2^{-N}$. 
But then we would have:
\[
\nu (E_n)\leq \nu (B) + \nu (E_n \setminus B) \leq 2^{-N} + \sum_{i <l}\epsilon_i < 2^{-N} +\epsilon,
\]
which is a contradiction. 
Now, if $F\in \mathscr S$ then $m_l$ is a legitimate move for Player {\rm II}
in the position $(\gamma_0,m_0,\ldots, m_{l-1},\gamma_l)$ of $\mathcal G_{\beta}(\mathscr S)$. 
We now reset the new value of $l$ to be $l+1$. We set $\gamma_{l+1}$ to be the move
of $\tau$ in the position $(\gamma_0,m_0,\ldots,\gamma_l,m_l)$ of the game $\mathcal G_{\beta}(\mathscr S)$. 
We start a  run $\pi_l$ of $\mathcal H(\omega^\omega, m_l, \epsilon_l)$ by letting the strategy
$\sigma_{m_l,\epsilon_l}$ make the first move, say  $(\alpha_l,B_l)$, in that game. 
We let $m_{l+1}$ be the least integer $q \geq m_l$
such that $B_l\in \mathcal B_q$. We then add $m_{l+1}$ to $F$.
All other variables are kept unchanged. Let us consider the effect of these changes
on \eqref{move}. The first $l$ terms have not changed. We have replaced $\omega^{\omega \cdot \gamma_l +s}$
by 
\[
\omega^{\omega \cdot \gamma_l}\cdot \alpha_l \oplus \omega^{\omega \cdot \gamma_{l+1}+ s'}
\]
for some integer $s'$. Note that $\alpha_l < \omega^s$ and $\gamma_{l+1} < \gamma_l$,
hence the value of \eqref{move} decreases in the next stage of the game, i.e. $\xi_{n+1} <\xi_n$.
%This means that in the next move Player {\rm I} plays  $\xi_{n+1} < \xi_n$. 
\medskip

\noindent {\bf Case 2.} Suppose now that $\nu (E_n\setminus B_i)\geq \epsilon_i$, for some $i$. 
Let $j$ be the least such $i$. This means that $E_n$ is a legitimate move for Player {\rm II}
in the current position $\pi_j$ of $\mathcal H(\omega^\omega,m_j,\epsilon_j)$. We then let Player {\rm II} play $E_n$ in this position and we let 
$\sigma_{m_j,\epsilon_j}$ respond to this move. We set the resulting position to be our new $\pi_j$. 
We set the new value of $l$ to be $j+1$. 
We keep all the positions $\pi_i$, for $i <j$, unchanged and we erase the positions $\pi_i$, for $i >j$.
We keep the values of the $\gamma_i$, for $i \leq j+1$, unchanged and we erase the  $\gamma_i$, for $i >j+1$. 
We keep all the $m_i$, for $i\leq j$, unchanged. For our new $m_{j+1}$ we pick the least integer
$q$ such that the new $B_j$ belongs to $\mathcal B_q$. We erase all the $m_i$, for $i > j+1$.
Finally, we set $F=\{ m_0,\ldots, m_{j+1}\}$. In order to estimate the effect of these changes
to \eqref{move} let us denote by $\alpha_l'$ the old value of $\alpha_l$ and by $\alpha''_l$ the new value
of $\alpha_l$. Let us also denote by $\alpha'_{l+1}$ the old value of $\alpha_{l+1}$. 
The first $l-1$ terms of \eqref{move} have not changed. 
In the $l$-th term we replaced $\omega^{\omega \cdot \gamma_l}\cdot \alpha'_l$ by 
$\omega^{\omega \cdot \gamma_l}\cdot \alpha''_l$ and in the $l+1$-th term we replaced
$\omega^{\omega \cdot \gamma_{l+1}}\cdot \alpha'_{l+1}$ by 
$\omega^{\omega \cdot \gamma_{l+1} +s}$, for some integer $s$. We erased
all later terms. Now, note that $\alpha''_l < \alpha'_l$ and $\gamma_{l+1}< \gamma_l$,
hence,
\[
\omega^{\omega \cdot \gamma_l}\cdot \alpha''_l \oplus \omega^{\omega \cdot \gamma_{l+1} +s} < 
\omega^{\omega \cdot \gamma_l}\cdot \alpha''_l + \omega^{\omega \cdot \gamma_l} 
=\omega^{\omega \cdot \gamma_l}\cdot (\alpha''_l+1) \leq
\omega^{\omega \cdot \gamma_l}\cdot \alpha'_l.
\]
This means that the value of \eqref{move} decreases in the next stage of the game, i.e. $\xi_{n+1} <\xi_n$.
Thus, Player {\rm I} can continue playing in this way as long as Player {\rm II} plays pairwise disjoint
sets $E_n$ with $\nu (E_n) \geq 2^{-N}+\epsilon$. This completes the proof of Lemma \ref{exhaustivity-winning}.
 
\end{proof}

Now, combining Corollary \ref{rank-sums}, Lemma \ref{Schreier-ranks}, Corollary \ref{exhaustivity-rank-Schreier}
and  Lemma \ref{exhaustivity-winning}, we obtain the following.

\begin{coro}\label{final-bounds} Suppose $0<\alpha <\omega_1$. Let $\| \cdot \|_\alpha$
be the admissible norm derived from the $\alpha$-th Schreier family and let $\nu^\alpha$
be the associated exhaustive submeasure. Then 
\[
\omega^\alpha \leq {\rm rk}(\nu^\alpha) \leq \omega^{\omega \cdot (\alpha +1)^\omega}.
\]
\qed
\end{coro}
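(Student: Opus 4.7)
The two halves are independent. The lower bound $\omega^\alpha \le {\rm rk}(\nu^\alpha)$ is immediate from Corollary \ref{exhaustivity-rank-Schreier}, which itself combined Lemma \ref{Rank} (computing $\delta(\mathcal P_\alpha) = \omega^\alpha$) with Proposition \ref{lower bound} (guaranteeing $\nu^\alpha(N_u) \ge 8$ for every $u$ with $\|\dom(u)\|_\alpha \le 1$). So all the work lies in the upper bound, and it is essentially a matter of chaining Lemma \ref{exhaustivity-winning} with the Schreier rank formulas.

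For the upper bound I would fix $\epsilon' > 0$ and bound ${\rm rk}_{\epsilon'}(\nu^\alpha)$ directly. Choose $N$ large enough that $2^{-N} < \epsilon'/2$, and set $\epsilon = \epsilon'/2$, so that $2^{-N} + \epsilon < \epsilon'$. Since $\nu^\alpha$ satisfies the covering property (Lemma \ref{limit-covers}), the $N$-thinness property relative to $\|\cdot\|_\alpha$ (Lemma \ref{limit-thinness}), and $\nu^\alpha(T) \ge 8$ (Proposition \ref{lower bound}), Lemma \ref{exhaustivity-winning} applies and gives Player {\rm I} a winning strategy in $\mathcal E(\omega^{\omega \cdot (\beta+1)})$, where $\beta = \rho(\mathscr S)$ for $\mathscr S = \{F : \|F\|_\alpha < M_N\}$. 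Any finite pairwise disjoint sequence witnessing that ${\rm rk}_{\epsilon'}(\nu^\alpha)$ is large consists of sets of $\nu^\alpha$-measure at least $\epsilon' > 2^{-N} + \epsilon$, hence constitutes a legal play for Player {\rm II} in that game; since the existence of Player {\rm I}'s winning strategy bounds the rank of the well-founded tree of Player {\rm II}'s plays, I conclude ${\rm rk}_{\epsilon'}(\nu^\alpha) \le \omega^{\omega \cdot (\beta+1)}$.

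The remaining step is to express $\beta$ in terms of $\alpha$ and $M_N$. Observing that $\mathscr S = \mathscr S_\alpha^{M_N-1}$, Lemma \ref{Schreier-ranks} gives $\rho(\mathscr S_\alpha) = \alpha$, and Corollary \ref{rank-sums} then yields $\beta \le (\alpha+1)^{M_N-1} - 1$, so $\beta + 1 \le (\alpha+1)^{M_N-1}$ and ${\rm rk}_{\epsilon'}(\nu^\alpha) \le \omega^{\omega \cdot (\alpha+1)^{M_N-1}}$. Since the sequence $(M_N)_N$ tends to infinity with $N$, taking the supremum over $\epsilon' > 0$ and using continuity of ordinal exponentiation (in the exponent) and multiplication (on the right factor) gives ${\rm rk}(\nu^\alpha) \le \sup_n \omega^{\omega \cdot (\alpha+1)^n} = \omega^{\omega \cdot (\alpha+1)^\omega}$. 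No real obstacle remains; the only care needed is this final ordinal-arithmetic bookkeeping, since everything substantive has already been packaged into Lemma \ref{exhaustivity-winning} and the Schreier rank calculations.
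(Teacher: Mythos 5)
Your proposal is correct and follows essentially the same route as the paper, which obtains the corollary precisely by combining Corollary \ref{exhaustivity-rank-Schreier} for the lower bound with Lemma \ref{exhaustivity-winning}, Lemma \ref{Schreier-ranks} and Corollary \ref{rank-sums} for the upper bound; your explicit bookkeeping (choosing $N$ with $2^{-N}<\epsilon'/2$, identifying $\mathscr S=\mathscr S_\alpha^{M_N-1}$ so that $\beta+1\leq(\alpha+1)^{M_N-1}$, and taking the supremum over $\epsilon'$ to reach $\omega^{\omega\cdot(\alpha+1)^\omega}$) is exactly the intended argument the paper leaves implicit.
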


 \bibliographystyle{abbrv}

\bibliography{mc1}

\end {document}